\newcommand{\mg}{\mathcal{G}}
\newcommand{\mc}{\mathcal{C}}
\newtheorem{thm}{Theorem}[section]
\newtheorem{lmm}[thm]{Lemma}
\newtheorem{cor}[thm]{Corollary}
\newtheorem{prop}[thm]{Proposition}
\theoremstyle{definition}
\newtheorem{ex}[thm]{Example}
\newcommand{\bbw}{\mathbf{W}}
\newcommand{\bbx}{\mathbf{X}}
\newcommand{\bbz}{\mathbf{Z}}
\newcommand{\bx}{\mathbf{x}}
\newcommand{\by}{\mathbf{y}}
\newcommand{\ee}{\mathbb{E}}
\newcommand{\mf}{\mathcal{F}}
\newcommand{\cp}{\mathcal{P}}
\newcommand{\pp}{\mathbb{P}}
\newcommand{\rr}{\mathbb{R}}
\newcommand{\var}{\mathrm{Var}}
\newcommand{\ve}{\varepsilon}
\newcommand{\fpar}[2]{\frac{\partial #1}{\partial #2}}
\newcommand{\bz}{\mathbf{z}}
\newcommand{\fc}{\mathfrak{C}}
\newcommand{\fs}{\mathfrak{S}}
\numberwithin{equation}{section}
\newcommand{\pkg}[1]{{\normalfont\fontseries{m}\selectfont #1}}
\begin{document}
\title{A simple measure of conditional dependence}
\author{Mona Azadkia}
\author{Sourav Chatterjee}
\address{Department of Statistics, Stanford University, Sequoia Hall, 390 Jane Stanford Way, Stanford, CA 94305}
\email{mazadkia@stanford.edu}
\email{souravc@stanford.edu}
\thanks{Research partially supported by NSF grants DMS-1608249 and DMS-1855484}
\keywords{Conditional dependence, non-parametric measures of association, variable selection}
\subjclass[2010]{62G05, 62H20}
\begin{abstract}
We propose a coefficient of conditional dependence between two random variables $Y$ and $Z$ given a set of other variables $X_1,\ldots,X_p$, based on an i.i.d.~sample. The coefficient has a long list of desirable properties, the most important of which is that under absolutely no distributional assumptions, it converges to a limit in $[0,1]$, where the limit is  $0$ if and only if $Y$ and $Z$ are conditionally independent given $X_1,\ldots,X_p$, and is $1$ if and only if  $Y$ is equal to a measurable function of $Z$ given $X_1,\ldots,X_p$.  Moreover, it has a natural interpretation as a nonlinear generalization of the familiar partial $R^2$ statistic for measuring conditional dependence by regression. Using this statistic, we devise a new variable selection algorithm, called Feature Ordering by Conditional Independence (FOCI), which is model-free, has no tuning parameters, and is provably consistent under sparsity assumptions.  A number of applications to synthetic and real datasets are worked out. 
\end{abstract}
\maketitle

\section{Introduction}\label{introsec}

The problem of measuring the amount of dependence between two random variables  is an old problem in statistics. Numerous methods have been proposed over the years. For recent surveys, see \cite{chatterjee19, Josse16}. The literature on measures of {\it conditional dependence}, on the other hand, is not so large, especially in the non-parametric setting.  

The non-parametric conditional independence testing problem can be relatively easily solved for discrete data using the classical Cochran--Mantel--Haenszel test \cite{cochran54, Mantel59}. This test can be adapted for continuous random variables by binning the data~\cite{huang10} or using kernels~\cite{Fukumizu08, zhang11, strobl19, doran14, sen17}. 

Besides these, there are methods based on estimating conditional cumulative distribution functions~\cite{Linton96, pss16},  conditional characteristic functions~\cite{Su07}, conditional probability density functions~\cite{Su08}, empirical likelihood~\cite{Su14}, mutual information and entropy~\cite{runge18, Joe89, Poczos12}, copulas~\cite{bergsma04, song09, veraverbeke11}, distance correlation~\cite{Wang15, Fan15, szekelyrizzo14}, and other approaches~\cite{Seth12}. A number of interesting ideas based on resampling and permutation tests have been proposed in recent years~\cite{Candes18, sen17, berrett19}. 

The first contribution of this paper is a new coefficient of conditional dependence between two random variables $Y$ and $Z$ given a set  of other variables $X_1,\ldots,X_p$, based on i.i.d.~data. The coefficient is inspired by  a similar measure of univariate dependence recently proposed in \cite{chatterjee19}. The main features of our coefficient are the following: 
\begin{enumerate}
\item it has a simple expression,
\item it is fully non-parametric, 
\item it has no tuning parameters,
\item there is no need for estimating conditional densities, conditional characteristic functions, or mutual information, 
\item it can be estimated from data very quickly, in time $O(n\log n)$ where $n$ is the sample size, 
\item asymptotically, it converges to a limit in $[0,1]$, where the limit is  $0$ if and only if $Y$ and $Z$ are conditionally independent given $X_1,\ldots,X_p$, and is $1$ if and only if  $Y$ is equal to a measurable function of $Z$ given $X_1,\ldots,X_p$, 
\item the limit has a natural interpretation as a nonlinear generalization of the familiar partial $R^2$ statistic for measuring the conditional dependence of $Y$ and $Z$ given $X_1,\ldots,X_p$, and 
\item all of the above hold under absolutely no assumptions on the laws of the random variables. 
\end{enumerate} 
The second contribution of this paper is a new variable selection algorithm based on the above measure of conditional dependence, called {\it Feature Ordering by Conditional Independence (FOCI)}, which is model-free, has no tuning parameters, and is provably consistent under sparsity assumptions. More importantly, it appears to perform very well in simulated and real datasets. The development of FOCI and the proof of its consistency are the major new contributions of this paper over \cite{chatterjee19}. It is not possible to devise such an algorithm using the univariate coefficient from \cite{chatterjee19}. 

The paper is organized as follows. The definition and properties of our coefficient are presented in Section \ref{resultsec}. Section \ref{intsec} discusses how to interpret the coefficient as a nonlinear generalization of partial $R^2$. A theorem about its rate of convergence is presented in Section~\ref{ratesec}. Our variable selection method is introduced in Section \ref{focisec} and a theorem about its consistency is stated in Section \ref{consistencysec}. The special case of linear regression with Gaussian predictors is illustrated in Section \ref{gaussiansec}. Applications to simulated and real datasets are presented in Section~\ref{examplesec}. The remaining sections are devoted to proofs. 


\section{The coefficient}\label{resultsec}
Let $Y$ be a random variable and $\bbx = (X_1,\ldots,X_p)$ and $\bbz = (Z_1,\ldots, Z_q)$ be random vectors, all defined on the same probability space. Here $q\ge 1$ and $p\ge 0$. The value $p=0$ means that $\bbx$ has no components at all.  Let $\mu$ be the law of $Y$. We propose the following quantity as a measure of the degree of conditional dependence of $Y$ and $\bbz$ given $\bbx$:
\begin{equation}\label{tdef0}
T = T(Y,\bbz|\bbx) := \frac{\int \ee(\var(\pp(Y\ge t|\bbz, \bbx)|\bbx)) d\mu(t)}{\int \ee(\var(1_{\{Y\ge t\}}|\bbx))d\mu(t)}.
\end{equation}
In the denominator, $1_{\{Y\ge t\}}$ is the indicator of the event $\{Y\ge t\}$. If the denominator equals zero, $T$ is undefined. (We will see below that this happens if and only if $Y$ is almost surely equal to a measurable function of $\bbx$, which is a degenerate case that we will ignore.) If $p=0$, then $\bbx$ has no components, and the conditional expectations and variances given $\bbx$ should be interpreted as unconditional expectations and variances. In this case we will  write $T(Y,\bbz)$ instead of  $T(Y,\bbz|\bbx)$.

Although the statistic $T$ has a somewhat complicated looking expression, it has a natural interpretation as a nonlinear generalization of the partial $R^2$ statistic for measuring the proportion of variation in $Y$ that is explained by $(\bbz,\bbx)$ but cannot be explained solely by $\bbx$. This is discussed in the next section. Specifically, see equation \eqref{inteq}. 

Note that $T$ is a non-random quantity that depends only the joint law of $(Y, \bbx,\bbz)$. Before stating our theorem about $T$, let us first see why $T$ is a reasonable measure of conditional dependence. Since taking conditional expectation decreases variance, we have that for any $t$, 
\[
\var(1_{\{Y\ge t\}}|\bbx) \ge \var(\pp(Y\ge t|\bbz, \bbx)|\bbx).
\]
This shows that the numerator in \eqref{tdef0} is less than or equal to the denominator, and so $T$ is always between $0$ and $1$. Now, if $Y$ and $\bbz$ are conditionally independent given $\bbx$, then $\pp(Y\ge t|\bbz, \bbx)$ is a function of $\bbx$ only, and hence $\var(\pp(Y\ge t|\bbz, \bbx)|\bbx) = 0$. Therefore in this situation, $T=0$. We will show later that the converse is also true. On the other hand, if $Y$ is almost surely equal to a measurable function of $\bbz$ given $\bbx$, then $\pp(Y\ge t|\bbz, \bbx) = 1_{\{Y\ge t\}}$ for any $t$. Therefore in this case, $T=1$. Again, we will prove later that the converse is  true. The following theorem summarizes these properties of~$T$.
\begin{thm}\label{popthm}
Suppose that $Y$ is not almost surely equal to a measurable function of $\bbx$ (when $p=0$, this means that $Y$ is not almost surely a constant). Then $T$ is well-defined and $0\le T \le 1$. Moreover, $T= 0$ if and only if $Y$ and $\bbz$ are conditionally independent given $\bbx$, and $T=1$ if and only if $Y$  is almost surely equal to a measurable function of $\bbz$ given $\bbx$. When $p=0$, conditional independence given $\bbx$ simply means unconditional independence. 
\end{thm}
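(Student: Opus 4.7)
The plan is to establish the four claims---well-definedness, the bounds $0\le T\le 1$, and the two characterizations---using the law of total variance together with regular conditional distributions $\mu^{\bbx}$ of $Y$ given $\bbx$ and $\mu^{(\bbx,\bbz)}$ of $Y$ given $(\bbx,\bbz)$. The workhorse technical lemma I would isolate first is this: \emph{if $\mathcal G$ is a sub-$\sigma$-algebra and for $\mu$-almost every $t$ the indicator $1_{\{Y\ge t\}}$ equals some $\mathcal G$-measurable random variable $g_t$ almost surely, then $Y$ is almost surely $\mathcal G$-measurable.} To prove it, note that the ``good'' set $A=\{t:\var(1_{\{Y\ge t\}}\mid\mathcal G)=0\text{ a.s.}\}$ has $\mu(A)=1$, so $A\cap\text{supp}(\mu)$ is dense in $\text{supp}(\mu)$; choose a countable $D\subset A\cap\text{supp}(\mu)$ that is dense in $\text{supp}(\mu)$ and contains every atom of $\mu$. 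On the probability-one event $\Omega_0=\bigcap_{t\in D}\{1_{\{Y\ge t\}}=g_t\}$ the value of $Y$ is recovered from the $\mathcal G$-measurable family $(g_t)_{t\in D}$ via $Y=\sup\{t\in D:g_t=1\}$, exhibiting $Y$ as a.s.\ $\mathcal G$-measurable.

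With this lemma in hand, \textbf{well-definedness} reduces to the observation that a zero denominator forces $\var(1_{\{Y\ge t\}}\mid\bbx)=0$ a.s.\ for $\mu$-a.e.\ $t$, hence by the lemma $Y$ is a.s.\ $\sigma(\bbx)$-measurable, contradicting the hypothesis. The \textbf{bounds} $0\le T\le 1$ come from the identity
\[
\var(1_{\{Y\ge t\}}\mid\bbx)=\var(\pp(Y\ge t\mid\bbx,\bbz)\mid\bbx)+\ee[\var(1_{\{Y\ge t\}}\mid\bbx,\bbz)\mid\bbx],
\]
which shows the numerator integrand is pointwise dominated by the denominator integrand, with both nonnegative.

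For the characterization of $T=0$, the forward direction is immediate: conditional independence makes $\pp(Y\ge t\mid\bbx,\bbz)=\pp(Y\ge t\mid\bbx)$ a.s., which is $\sigma(\bbx)$-measurable and kills the numerator. Conversely, $T=0$ forces this same equality for $\mu$-a.e.\ $t$; repeating the dense-subset construction I obtain a single probability-one event on which equality holds simultaneously for all $t$ in a countable $D$ dense in $\text{supp}(\mu)$, and because both $\mu^{\bbx}$ and $\mu^{(\bbx,\bbz)}$ are a.s.\ supported on $\text{supp}(\mu)$ (since $\mu=\ee[\mu^{\bbx}]=\ee[\mu^{(\bbx,\bbz)}]$) with left-continuous tails, equality on $D$ upgrades to equality of the regular conditional distributions, i.e.\ $Y\perp\bbz\mid\bbx$. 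For $T=1$, the forward direction is again trivial, and the converse uses the identity above to deduce $\ee[\var(1_{\{Y\ge t\}}\mid\bbx,\bbz)]=0$ for $\mu$-a.e.\ $t$, then invokes the lemma with $\mathcal G=\sigma(\bbx,\bbz)$. The main obstacle throughout is precisely the reconstruction lemma: upgrading a $\mu$-almost-everywhere-in-$t$ statement to one that holds simultaneously on a dense countable set, where both $Y$ itself and the regular conditional distributions can be pinned down.
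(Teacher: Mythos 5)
Your proof takes a genuinely different route from the paper. The paper derives Theorem~\ref{popthm} as a corollary of the estimator-level Theorems~\ref{mainthm} and~\ref{basethm}, with the population-level work distributed across Lemma~\ref{indepthm1}, Lemma~\ref{monotonethm}, and a separate argument at the end of the proof of Theorem~\ref{basethm} showing that $c=d$ forces the conditional law of $Y$ given $\bbz$ to be almost surely a point mass. You instead argue directly at the population level, and your reconstruction lemma --- if $1_{\{Y\ge t\}}$ is a.s.\ $\mathcal{G}$-measurable for $\mu$-a.e.\ $t$ then $Y$ is a.s.\ $\mathcal{G}$-measurable --- is a clean abstraction that dispatches well-definedness and the $T=1$ converse in one stroke, and whose ``countable dense $D$ plus atoms'' machinery also drives the $T=0$ converse. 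For someone who only wants Theorem~\ref{popthm} this is shorter and the central idea is more transparent than what the paper does.

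That said, the sketch has two genuine gaps, both concentrated at the right endpoints of the complementary gaps of $\mathrm{supp}(\mu)$. First, in the reconstruction lemma, the identity $Y=\sup\{t\in D:g_t=1\}$ does not hold on all of $\Omega_0$: it requires that either $Y\in D$ or that $D$ accumulates at $Y$ from the left. A point of $\mathrm{supp}(\mu)$ that is isolated from the left within the support but is not an atom of $\mu$ fails both conditions and need not belong to $D$. You have to add the observation that this bad set $B$ is countable (its points are right endpoints of complementary gaps of the support, or the minimum of the support), hence $\mu$-null since it contains no atoms, so $\pp(Y\in B)=0$ and the reconstruction works on $\Omega_0\cap\{Y\notin B\}$. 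Second, in the $T=0$ converse, agreement of $G_{\bbx}$ and $G_{(\bbx,\bbz)}$ on a countable $D$ dense in $\mathrm{supp}(\mu)$, together with a.s.\ supports inside $\mathrm{supp}(\mu)$ and left-continuity, does not by itself force equality of the two conditional laws: at a gap endpoint $b\notin D$, the atom masses $\mu^{\bbx}(\{b\})$ and $\mu^{(\bbx,\bbz)}(\{b\})$ are not pinned down by the $D$-values. You need to deploy the identity $\mu=\ee[\mu^{\bbx}]$ --- which you cite but not for this purpose --- to get $\mu^{\bbx}(\{b\})=0$ a.s.\ for each of the countably many non-atomic gap endpoints $b$, and likewise for $\mu^{(\bbx,\bbz)}$; only then does agreement on $D$ propagate to all of $\rr$. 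The paper avoids this topological bookkeeping with a slicker device: for each fixed $t$ it derives a one-sided inequality $G_{(\bbx,\bbz)}(t)\ge G_{\bbx}(t)$ a.s.\ by a monotone limit along a sequence drawn from the full-measure set, and then upgrades to equality using $\ee(G_{(\bbx,\bbz)}(t)\mid\bbx)=G_{\bbx}(t)$. Adopting that ``conditional-mean-zero plus one-sided bound implies equality'' trick would let you bypass both of the steps you are currently hand-waving.
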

The statistic $T$ is a generalization of a similar univariate measure defined in \cite{dss13, chatterjee19}. Having defined $T$, the main question is whether $T$ can be efficiently estimated from data. We will now present a consistent estimator of $T$, which is our conditional dependence coefficient. This generalizes a similar univariate estimator defined in \cite{chatterjee19}. Our data consists of $n$ i.i.d.~copies $(Y_1,\bbx_1,\bbz_1),\ldots,(Y_n, \bbx_n,\bbz_n)$ of the triple $(Y,\bbx,\bbz)$, where $n\ge 2$. For each $i$, let $N(i)$ be the index $j$ such that $\bbx_j$ is the nearest neighbor of $\bbx_i$ with respect to the Euclidean metric on $\rr^p$, where ties are broken uniformly at random. Let $M(i)$ be the index $j$ such that $(\bbx_j, \bbz_j)$ is the nearest neighbor of $(\bbx_i, \bbz_i)$ in $\rr^{p+q}$, again with ties broken uniformly at random. Let $R_i$ be the rank of $Y_i$, that is, the number of $j$ such that $Y_j\le Y_i$. If $p\ge 1$, our estimate of $T$ is 
\[
T_n = T_n(Y, \bbz|\bbx) := \frac{\sum_{i=1}^n (\min\{R_i, R_{M(i)}\} - \min\{R_i, R_{N(i)}\})}{\sum_{i=1}^n (R_i - \min\{R_i, R_{N(i)}\})}.
\]
If $p=0$, let $L_i$ be the number of $j$ such that $Y_j\ge Y_i$, let $M(i)$ denote the $j$ such that $\bbz_j$ is the nearest neighbor of $\bbz_i$ (ties broken uniformly at random),  and let
\[
T_n = T_n(Y, \bbz) := \frac{\sum_{i=1}^n (n\min\{R_i, R_{M(i)}\} - L_i^2)}{\sum_{i=1}^n L_i(n-L_i)}.
\]
In both cases, $T_n$ is undefined if the denominator is zero. 
The following theorem proves that $T_n$ is indeed a consistent estimator of $T$.
\begin{thm}\label{sampthm}
Suppose that $Y$ is not almost surely equal to a measurable function of $\bbx$. Then as  $n\to\infty$, $T_n \to T$ almost surely.
\end{thm}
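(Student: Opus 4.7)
The plan is to show that the numerator and denominator of $T_n/n^{2}$ converge almost surely to the numerator and denominator of $T$. Using the identities $R_i = \sum_k 1_{\{Y_k\le Y_i\}}$ and $\min\{R_i,R_j\} = \sum_k 1_{\{Y_k\le Y_i,\, Y_k\le Y_j\}}$, each sum in $T_n$ rewrites as a double sum over $(i,k)$. Writing $G(t\mid \bbx,\bbz):=\pp(Y\ge t\mid \bbx,\bbz)$, the three targets are
\[
\frac{1}{n^2}\sum_{i,k} 1_{\{Y_k\le Y_i\}}\to \int \pp(Y\ge t)\,d\mu(t),
\]
\[
\frac{1}{n^2}\sum_{i,k} 1_{\{Y_k\le Y_i,\,Y_k\le Y_{N(i)}\}}\to \int \ee[\pp(Y\ge t\mid \bbx)^2]\,d\mu(t),
\]
\[
\frac{1}{n^2}\sum_{i,k} 1_{\{Y_k\le Y_i,\,Y_k\le Y_{M(i)}\}}\to \int \ee[G(t\mid \bbx,\bbz)^2]\,d\mu(t).
\]
Expanding $\ee[\var(U\mid\bbx)]=\ee[U^2]-\ee[\ee(U\mid\bbx)^2]$ in the definition of $T$ and using the symmetry identity $\int \pp(Y\le t)\,d\mu(t) = \int\pp(Y\ge t)\,d\mu(t)$, these three limits combine to give $T_n\to T$; the limiting denominator is strictly positive by Theorem \ref{popthm} under the stated hypothesis on $Y$.

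The first target follows from the strong law for V-statistics. The second and third are the real work. I would first establish convergence in expectation: conditioning on all of $(\bbx_j,\bbz_j)_{j=1}^n$ and on $Y_k$, and discarding the $O(n)$ terms where $i$, $k$, $M(i)$ are not distinct, one gets
\[
\ee\bigl[1_{\{Y_k\le Y_i,\,Y_k\le Y_{M(i)}\}} \mid \text{cond.}\bigr] = G(Y_k\mid \bbx_i,\bbz_i)\,G(Y_k\mid \bbx_{M(i)},\bbz_{M(i)}),
\]
since $Y_i$ and $Y_{M(i)}$ are conditionally independent. The heart of the argument is then a nearest-neighbor lemma: for any bounded measurable $\psi$ on $\rr^{p+q}$, the quantity $\frac{1}{n}\sum_i|\psi(\bbx_{M(i)},\bbz_{M(i)})-\psi(\bbx_i,\bbz_i)|$ tends to zero in $L^1$. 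This follows from the Lebesgue differentiation theorem for the law of $(\bbx,\bbz)$: nearest-neighbor radii in $\rr^{p+q}$ shrink to zero, so local averages of bounded measurable functions converge almost everywhere to the central value. Applying the lemma with $\psi=G(Y_k\mid \cdot)$ and integrating out $Y_k$ produces the third target. The $N(i)$ case is parallel, but because $N(i)$ is nearest only in the $\bbx$-coordinates, the conditional law of $\bbz_{N(i)}$ given $\bbx_{N(i)}\approx \bbx_i$ collapses $G(Y_k\mid \bbx,\bbz)$ to $G(Y_k\mid \bbx)$, yielding the second target.

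To upgrade convergence in expectation to almost sure convergence, each of the three normalized sums is a functional of the $n$ i.i.d.\ triples $(Y_i,\bbx_i,\bbz_i)$ whose value changes by $O(1/n)$ when any single triple is replaced: a nearest-neighbor graph in $\rr^{p+q}$ has bounded in-degree (a standard geometric fact), so a single data point contributes to only $O(1)$ indicator terms, and the ranks $R_i$ shift by at most $1$. McDiarmid's bounded-differences inequality then gives sub-Gaussian concentration at scale $n^{-1/2}$, and Borel--Cantelli yields the almost sure conclusion. The $p=0$ case, which uses the alternative formula for $T_n$, follows by the same scheme with notational changes. The main obstacle throughout is the nearest-neighbor lemma in the absence of any continuity assumption: the regular conditional law of $(Y,\bbz)$ given $\bbx$ may be wildly discontinuous in $\bbx$, so pointwise continuity is unavailable and one must instead invoke the measure-theoretic Lebesgue differentiation theorem on $\rr^p$ and $\rr^{p+q}$.
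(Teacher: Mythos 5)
Your overall architecture matches the paper's: decompose $T_n$ into V-statistic-like terms, identify a nearest-neighbor lemma for measurable functions as the heart of the matter, and upgrade from convergence in expectation to almost sure convergence by bounded differences. But the proof you give for the nearest-neighbor lemma has a genuine gap. You claim that $\frac{1}{n}\sum_i|\psi(\bbw_{M(i)})-\psi(\bbw_i)|\to 0$ in $L^1$ "follows from the Lebesgue differentiation theorem," because "nearest-neighbor radii shrink to zero, so local averages of bounded measurable functions converge almost everywhere to the central value." The difficulty is that $\psi(\bbw_{M(i)})$ is a \emph{point evaluation} at a random nearby location, not a local average. The Lebesgue differentiation theorem asserts $\nu(B(w,r))^{-1}\int_{B(w,r)}\psi\,d\nu\to\psi(w)$ for $\nu$-a.e.\ $w$; it says nothing directly about $\psi$ evaluated at the nearest of $n-1$ i.i.d.\ samples drawn from $\nu$, whose conditional law given $\bbw_1=w$ is a complicated, highly non-uniform measure concentrated near (but not averaged over) a small sphere around $w$. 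Concretely, for $\psi=1_A$ the differentiation theorem gives that the density of $A$ tends to $0$ or $1$ at $\nu$-a.e.\ point, but does not by itself bound $\pp(\psi(\bbw_{n,1})\ne\psi(\bbw_1))$. The paper's route instead shows $\bbw_{n,1}\to\bbw_1$ a.s.\ by a support argument (Lemma \ref{nnlmm}), approximates $\psi$ by a compactly supported \emph{continuous} $g$ via Lusin's theorem (Lemma \ref{lusinthm}), uses continuity of $g$ for the main term, and bounds the exceptional set $\pp(\psi(\bbw_{n,1})\ne g(\bbw_{n,1}))$ by the transfer inequality $\ee(f(\bbw_{n,1}))\le C(p)\,\ee(f(\bbw_1))$ (Lemma \ref{cplmm}). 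That inequality is exactly where the bounded in-degree of the nearest-neighbor graph (Lemma \ref{geomlmm}) is used. In your write-up you invoke the bounded in-degree fact only for the concentration step, but it is the essential ingredient in the expectation step too, and together with Lusin's theorem it does the job you attribute to Lebesgue differentiation.

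A secondary issue: your bounded-differences argument implicitly treats the nearest-neighbor in-degree as uniformly bounded, but the standard geometric bound only counts points $\bbx_j\ne\bbx_i$ whose nearest neighbor is $\bbx_i$. If the law of $\bbx$ has atoms, a cluster of coincident points makes the naive degree unbounded (changing one point's tie-breaking variable could perturb $N(j)$ for every other point in the cluster). The paper handles this by first passing to the conditional expectation $A_n'$ of the relevant sum given the tie-free data (averaging out the tie-breaking inside clusters), and only then applying McDiarmid's inequality, with a multi-case analysis of how moving one point in or out of a cluster changes $A_n'$. Since the theorem is stated with no assumptions on the law of $(Y,\bbx,\bbz)$, this case cannot be waved away, and your proposal would need a similar device.
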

\noindent {\it Remarks.} (1) If $p$ and $q$ are fixed, the statistic $T_n$ can be computed in $O(n\log n)$ time because nearest neighbors can be determined in $O(n\log n)$ time \cite{fbf77} and ranks can also be calculated in $O(n\log n)$ time \cite{knuth}. 

(2) No assumptions on the joint law of $(Y,\bbx,\bbz)$ are needed other than the non-degeneracy condition that $Y$ is not almost surely equal to a measurable function of $\bbx$. This condition is inevitable, because if this does not hold, then given $\bbx$, $Y$ is a constant; in this circumstance, $Y$ is both a function of $\bbz$ given $\bbx$ {\it and} independent of $\bbz$ given $\bbx$, and so there can be no reasonable measure of the degree of conditional dependence of $Y$ and $\bbz$ given $\bbx$.


(3) Although the limit of $T_n$ is guaranteed to be in $[0,1]$, the actual value of $T_n$ for finite $n$ may lie outside this interval.

(4) It is not easy to explain why $T_n$ is a consistent estimator of $T$ without going into the details of the proof, so we will not make that attempt here.

(5) We have not given a name to $T_n$, but if an acronym is desired for easy reference, one may call it CODEC, which is an acronym for Conditional Dependence Coefficient. In fact, this is the acronym that we use in the R code for computing $T_n$. 


(6) We have prepared an R package, called FOCI, that has a function for computing $T_n$ and a function for executing the variable selection algorithm FOCI presented in Section \ref{focisec} below. The package is available for download on CRAN~\cite{acpackage}.

(7) Besides variable selection, another natural area of applications of our coefficient is graphical models. This is currently under investigation.

(8) The consistency of $T_n$ raises the possibility of constructing a consistent test for conditional independence based on $T_n$. However, it is known that this is an impossible task, even for a single alternative hypothesis, if we demand that the level of the test be asymptotically uniformly bounded by some given $\alpha$ over the whole null hypothesis space~\cite{shahpeters18}. This is why the problem of nonparametric conditional independence testing for continuous random variables  is essentially unsolvable unless one is willing to  impose unverifiable assumptions. This contrasts starkly with the problem of nonparametric testing of {\it unconditional} independence, for which there are many useful and popular methods (see \cite{chatterjee19} for a survey).

(9) In view of Theorem \ref{sampthm}, it is natural to be curious about the rate of convergence of $T_n$ to $T$. This is investigated in Section \ref{ratesec}.

\section{Interpreting the coefficient}\label{intsec}
To interpret $T(Y,\bbz|\bbx)$, it is instructive to first consider the case of binary $Y$. Suppose that $Y$ is $\{0,1\}$-valued. Then $\mu$ is supported on $\{0,1\}$. Since $Y\ge 0$ always, we have $\pp(Y\ge 0|\bbz,\bbx) = 1_{\{Y\ge 0\}}=1$ always. Thus,
\[
\var(\pp(Y\ge 0|\bbz,\bbx)|\bbx) = \var(1_{\{Y\ge 0\}}|\bbx) = 0.
\]
Moreover, $Y = 1_{\{Y\ge 1\}}$. Combining all of this, we get that for binary $Y$, 
\begin{align*}
T(Y,\bbz|\bbx) &= \frac{\ee(\var(\ee(Y|\bbz,\bbx)|\bbx))}{\ee(\var(Y|\bbx))}.
\end{align*}
But, by the law of total variance, 
\[
\var(Y|\bbx) = \ee(\var(Y|\bbz,\bbx)|\bbx) + \var(\ee(Y|\bbz,\bbx)|\bbx).
\]
Thus, for binary $Y$, 
\begin{align*}
T(Y,\bbz|\bbx) &= 1 - \frac{ \ee(\var(Y|\bbz,\bbx))}{\ee(\var(Y|\bbx))}. 
\end{align*}
But this is just the {\it partial $R^2$}  that measures the proportion of variation in $Y$ that is explained by $(\bbz,\bbx)$ but cannot be explained solely by $\bbx$. Therefore, when $Y$ is binary, we have the identity
\[
T(Y,\bbz|\bbx) = R_{Y,\bbz|\bbx}^2. 
\]
For a general $Y$, let $Y_t := 1_{\{Y\ge t\}}$ for each $t$.  Then by the same calculation as above, we get
\begin{align*}
T(Y,\bbz|\bbx) &= 1 - \frac{\int \ee(\var(Y_t|\bbz,\bbx))d\mu(t)}{\int\ee(\var(Y_t|\bbx))d\mu(t)}.
\end{align*}
Let us now define a probability measure $\nu$ on $\rr$, which has density proportional to $\ee(\var(Y_t|\bbx))$ with respect to $\mu$. Then the above formula can be rewritten as 
\begin{align}\label{inteq}
T(Y,\bbz|\bbx) = \int R_{Y_t,\bbz|\bbx}^2 d\nu(t).
\end{align}
Thus, $T(Y,\bbz|\bbx)$ is a weighted average of $R_{Y_t,\bbz|\bbx}^2$ over all $t\in \rr$. But the random variable $Y$ is a linear combination of the binary variables $\{Y_t\}_{t\in \rr}$. Therefore $T(Y,\bbz|\bbx)$ is {\it also a measure of the proportion of variation in $Y$ that is explained by $(\bbz,\bbx)$ but cannot be explained solely by $\bbx$}. It generalizes the usual partial $R^2$ statistic $R^2_{Y,\bbz|\bbx}$ in a nonlinear way --- by breaking up $Y$ as a linear combination of binary variables, computing the partial $R^2$ for each binary variable, and combining these partial $R^2$ statistics by taking a weighted average. 

\section{Rate of convergence}\label{ratesec}
Suppose that $p\ge 1$, so that $\bbx$ has at least one component. (Recall that $q$ is always at least $1$.) To obtain a rate of convergence of $T_n$ to $T$, we need to make some assumptions about the distribution of $(Y,\bbx,\bbz)$, because otherwise, we believe that the convergence may be arbitrarily slow. The main issue is that we need some kind of control on the sensitivity of the conditional distribution of $Y$ given $\bbx$ and $\bbz$ on the values of $\bbx$ and $\bbz$. This is handled by the first assumption below. The second assumption is a matter of technical convenience.
\begin{itemize}
\item[(A1)] There are nonnegative real numbers $\beta$ and $C$ such that for any $t\in \rr$, $\bx,\bx'\in \rr^p$ and $\bz,\bz' \in \rr^q$, 
\begin{align*}
&|P(Y\ge t|\bbx = \bx, \bbz=\bz)  - P(Y\ge t|\bbx = \bx', \bbz=\bz')|\notag\\
&\le C(1+\|\bx\|^\beta +\|\bx'\|^\beta +\|\bz\|^\beta +\|\bz'\|^\beta )(\|\bx-\bx'\| + \|\bz-\bz'\|),
\end{align*}
and
\begin{align*}
&|P(Y\ge t|\bbx = \bx)  - P(Y\ge t|\bbx = \bx')|\\
&\le C(1+\|\bx\|^\beta +\|\bx'\|^\beta )\|\bx-\bx'\|.
\end{align*}
\item[(A2)] There are positive numbers $C_1$ and $C_2$ such that for any $t>0$, $\pp(\|\bbx\|\ge t)$ and $\pp(\|\bbz\|\ge t)$ are bounded by $C_1 e^{-C_2t}$. 
\end{itemize}
Note that assumption (A1) means that the conditional distribution of $Y$ given $(\bbx,\bbz)=(\bx,\bz)$ is a locally Lipschitz function of $(\bx,\bz)$, where the Lipschitz constant is allowed to grow at most polynomially in $\|\bx\|$ and $\|\bz\|$. Local Lipschitzness is a fairly relaxed assumption. It only excludes  esoteric cases where the conditional distribution of $Y$ given $(\bbx,\bbz)=(\bx,\bz)$ is a very rough function of $(\bx,\bz)$ (for example, like a Brownian path), which do not arise in any model used in practice.

Under the above assumptions, the following theorem shows that $T_n$ converges to $T$ essentially at the rate $n^{-1/(p+q)}$, up to an extra logarithmic term. 
\begin{thm}\label{ratethm}
Suppose that $p\ge 1$ and $q\ge 1$, and that  the assumptions \textup{(A1)} and \textup{(A2)} hold with some $\beta$ and $C$. Then, as $n\to\infty$, 
\[
T_n - T = O_P\biggl(\frac{(\log n)^{p+q+\beta+1}}{n^{1/(p+q)}}\biggr).
\]
\end{thm}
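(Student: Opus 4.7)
The plan is to decompose $T_n = A_n/B_n$ into numerator $A_n$ and denominator $B_n$, show that $n^{-2}A_n$ and $n^{-2}B_n$ each converge to their population counterparts at rate $n^{-1/(p+q)}$ up to logarithmic factors, and then apply the quotient rule using that the population denominator is bounded away from zero by the nondegeneracy hypothesis inherited from Theorem \ref{popthm}. The starting point is the rank identity $\min(R_i,R_j)=\sum_k 1_{\{Y_k\le Y_i\}}\,1_{\{Y_k\le Y_j\}}$, which allows me to rewrite
\[
\frac{A_n}{n^2} = \int \bigl[S^M_n(t) - S^N_n(t)\bigr]\,d\mu_n(t), \qquad \frac{B_n}{n^2} = \int \bigl[F_n(t^-) - S^N_n(t)\bigr]\,d\mu_n(t),
\]
where $\mu_n$ is the empirical distribution of $Y$, $F_n$ its empirical CDF, and $S^\bullet_n(t):=n^{-1}\sum_i 1_{\{Y_i\ge t\}}\,1_{\{Y_{\bullet(i)}\ge t\}}$ for $\bullet\in\{M,N\}$. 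Replacing $\mu_n$ by $\mu$ in these integrals costs $O_P(n^{-1/2}\sqrt{\log n})$ by a Dvoretzky--Kiefer--Wolfowitz bound applied to the (uniformly bounded) integrands, which is dominated by the target rate.

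The core of the proof is then to show, uniformly in $t$, that $S^N_n(t)\to\ee[\pp(Y\ge t|\bbx)^2]$ and $S^M_n(t)\to\ee[\pp(Y\ge t|\bbx,\bbz)^2]$ at rate $n^{-1/(p+q)}$ up to polylog factors. For the bias, I condition on all covariate values and exploit that $Y_i$ and $Y_{M(i)}$ are conditionally independent given $(\bbx_i,\bbz_i,\bbx_{M(i)},\bbz_{M(i)})$ together with the nearest-neighbor index, which yields
\[
\ee[S^M_n(t)] = \ee\bigl[\pp(Y\ge t|\bbx_1,\bbz_1)\,\pp(Y\ge t|\bbx_{M(1)},\bbz_{M(1)})\bigr].
\]
Assumption (A1) bounds the difference between this and $\ee[\pp(Y\ge t|\bbx,\bbz)^2]$ in terms of the polynomial Lipschitz factor $(1+\|\bbx_1\|^\beta+\|\bbz_1\|^\beta+\ldots)$ times the nearest-neighbor distance $\|(\bbx_{M(1)},\bbz_{M(1)})-(\bbx_1,\bbz_1)\|$. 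On the high-probability event $\{\max_i\|\bbx_i\|\vee\|\bbz_i\|\le C\log n\}$ afforded by (A2), a volume/covering argument gives a uniform nearest-neighbor distance bound of order $n^{-1/(p+q)}$ times a polylog factor, and combining with the $(\log n)^\beta$ polynomial-Lipschitz factor produces a bias bound of order $(\log n)^{p+q+\beta}/n^{1/(p+q)}$.

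The stochastic deviation of $S^\bullet_n(t)$ from its mean is handled by a bounded-differences argument: swapping a single data point $(Y_j,\bbx_j,\bbz_j)$ alters only a bounded number of nearest-neighbor pointers (a classical geometric fact in fixed dimension), so $S^\bullet_n(t)$ has bounded-differences coefficients of order $1/n$, and McDiarmid's inequality yields pointwise concentration at rate $n^{-1/2}\sqrt{\log n}$. A grid-plus-union-bound argument over a polynomially fine net of $t$-values upgrades this to uniform concentration at the cost of one additional $\log n$ factor, which is negligible against the bias. Combining the bias, stochastic, and DKW contributions to both $n^{-2}A_n$ and $n^{-2}B_n$, and then applying the quotient rule, yields $T_n-T = O_P((\log n)^{p+q+\beta+1}/n^{1/(p+q)})$ as claimed.

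The main obstacle is the quantitative nearest-neighbor distance bound in the setting of unbounded covariate support with no density lower bound: one has to combine the exponential tails of (A2) with moment-type control on typical NN distances and then uniformize over the index $i$, all while carrying the polynomial Lipschitz growth factor $(1+\|\bbx\|^\beta+\|\bbz\|^\beta+\ldots)$ through the calculation without exceeding the stated $(\log n)^{p+q}$ overhead. The remaining ingredients — the rank-to-integral reduction, the DKW substitution, and the McDiarmid concentration — are conceptually routine, but the bookkeeping required to have the log exponents add up to exactly $p+q+\beta+1$ demands careful accounting.
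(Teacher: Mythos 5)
Your proposal matches the paper's proof in every essential respect: the decomposition of $T_n$ into numerator and denominator with the population denominator bounded away from zero, the rank-to-integral reduction $\min\{R_i,R_j\}=\sum_k 1_{\{Y_k\le Y_i\}}1_{\{Y_k\le Y_j\}}$, the Dvoretzky--Kiefer--Wolfowitz substitution of $\mu_n$ by $\mu$, a covering argument on a ball of radius $O(\log n)$ for the nearest-neighbor distance combined with the local-Lipschitz condition (A1) and the exponential tails (A2) to control the bias, and a bounded-differences (McDiarmid) inequality for the stochastic fluctuation --- exactly the chain of Lemmas \ref{nndistlmm0}, \ref{qqnlmm0}, and \ref{concthm}, applied to $Q_n(Y,\bbw)$ and $Q_n(Y,\bbx)$. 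The only inessential departure is your grid-plus-union-bound step to make the concentration of $S^\bullet_n(t)$ uniform in $t$, which the paper avoids by applying McDiarmid directly to the already $t$-integrated statistics $Q_n$; your version would also close, at the cost of a little extra bookkeeping, so this is a simplification rather than a correction.
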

We believe that the rate $n^{-1/(p+q)}$ in Theorem \ref{ratethm} is the true rate of convergence of $T_n$ to $T$ when the variables are continuous.  It is not clear if there is some other statistic with the same properties as $T_n$ but with a better rate of convergence. 

Note that the case $p=0$ is not covered by Theorem \ref{ratethm}. This is the case where $T_n$ is a measure of unconditional, rather than conditional, dependence. When $p=0$ and $q=1$, we conjecture that $T_n-T = O_P(n^{-1/2})$. Moreover, under independence, we conjecture that $\sqrt{n}T_n$ obeys a central limit theorem when $p=0$ and $q=1$. At this moment, we do not know how to prove these conjectures. 


Conditions (A1) and (A2) are trivially satisfied if the support of $(Y,\bbx,\bbz)$ is a finite set, by choosing $\beta=0$ and a suitably large $C$. Another situation where it is easy to see that (A1) and (A2) hold is when $(Y,\bbx,\bbz)$ is normal, because then the conditional distribution of $Y$ given $(\bbx,\bbz)$ is again normal with a mean that is a linear function of $\bbx$ and $\bbz$, and a variance that does not depend on $\bbx$ and $\bbz$.

 More generally, the following result shows that (A1) is satisfied for a large class of densities with certain regularity and decay properties (and (A2) holds widely anyway). 
\begin{prop}\label{conditionprop}
Let $f(y|\bx)$ be the conditional probability density function of $Y$ given $\bbx=\bx$, assuming it exists. Suppose that $f$ is nonzero everywhere and differentiable with respect to $\bx$, and for each $i$, the function
\[
\biggl|\fpar{}{x_i}\log f(y|\bx)\biggr|
\] 
is bounded above by a polynomial in $|y|$ and $\|\bx\|$. Next, suppose that for any compact set $K\subseteq \rr^p$, the function $g(y) := \max_{\bx\in K} f(y|\bx)$ is bounded and decays faster than any negative power of $|y|$ as $|y|\to \infty$. Lastly, assume that for any $k\ge 1$, $\ee(Y^{2k}|\bbx=\bx)$ is bounded above by a polynomial in $\|\bx\|$. Then the second inequality in assumption \textup{(A1)} holds for some $C$ and $\beta$. A similar set of conditions on the conditional density of $Y$ given $\bbx=\bx$ and $\bbz=\bz$ ensures that the first inequality in \textup{(A1)} holds. 
\end{prop}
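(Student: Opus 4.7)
My plan is to prove the second inequality of (A1) by applying the fundamental theorem of calculus to $f(y|\cdot)$ along the line segment from $\bx'$ to $\bx$, controlling the integrand via the polynomial bound on $\nabla_\bx \log f$, and then using the conditional moment hypothesis to bound the resulting $y$-integral uniformly in $t$. The first inequality of (A1) will follow from the identical argument applied in $\rr^{p+q}$ to the joint density $f(y|\bx,\bz)$, so I will focus on the second.

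The first step is to write
\[
P(Y\ge t|\bbx=\bx)-P(Y\ge t|\bbx=\bx') = \int_t^\infty [f(y|\bx)-f(y|\bx')]\,dy,
\]
set $\bx_s := \bx'+s(\bx-\bx')$ for $s\in[0,1]$, and use differentiability of $f$ in $\bx$ together with the identity $\nabla_\bx f = f\,\nabla_\bx \log f$ (valid since $f>0$ everywhere) to obtain the pointwise bound
\[
|f(y|\bx)-f(y|\bx')| \le \|\bx-\bx'\|\int_0^1 f(y|\bx_s)\,Q(|y|,\|\bx_s\|)\,ds,
\]
where $Q$ is the polynomial dominating $\|\nabla_\bx \log f\|$ granted by the hypothesis. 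Taking absolute values inside the defining integral, bounding $\int_t^\infty$ by $\int_\rr$, and applying Tonelli to the non-negative integrand gives
\[
|P(Y\ge t|\bx)-P(Y\ge t|\bx')| \le \|\bx-\bx'\|\int_0^1 \ee\bigl(Q(|Y|,\|\bx_s\|)\,\big|\,\bbx=\bx_s\bigr)\,ds.
\]

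The second step bounds the conditional expectation. Expanding $Q$ as a finite sum of monomials $|y|^k\|\bx_s\|^j$ and applying the hypothesis on $\ee(Y^{2m}|\bbx=\bx_s)$ (together with Cauchy--Schwarz to handle odd $k$, since $\ee(|Y|^{2m+1}|\bbx=\bx_s)\le\sqrt{\ee(Y^{2m}|\bbx_s)\ee(Y^{2m+2}|\bbx_s)}$) yields a polynomial bound $R(\|\bx_s\|)$. Since $\|\bx_s\|\le\|\bx\|+\|\bx'\|$, the elementary inequality $(a+b)^m\le 2^{m-1}(a^m+b^m)$ converts $R(\|\bx_s\|)$ into an expression of the form $C(1+\|\bx\|^\beta+\|\bx'\|^\beta)$ for suitable $(C,\beta)$ independent of $s$ and $t$, which is exactly the required Lipschitz estimate.

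The Tonelli step is legitimate because the integrand is non-negative and the $s$-then-$y$ iterated integral is finite by the moment bound just described; the decay assumption on $g(y)=\max_{\bx\in K} f(y|\bx)$ additionally supplies an integrable envelope $g(y)Q(|y|,R)$ on any compact $K$ containing the segment from $\bx'$ to $\bx$, which gives a clean direct verification if preferred. I expect the main obstacle to be not conceptual but organizational --- packaging the polynomial bounds uniformly in $s\in[0,1]$ and $t\in\rr$ and extracting a single pair $(C,\beta)$ valid for every $\bx,\bx'$. Everything else is standard calculus plus moment control, and the first inequality of (A1) is the same argument run in $\rr^{p+q}$ with $f(y|\bx,\bz)$ and the joint hypotheses in place of their marginal counterparts.
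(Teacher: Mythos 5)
Your argument is correct and is, in substance, the paper's own proof: both hinge on writing $\nabla_\bx f = f\,\nabla_\bx\log f$ (valid since $f>0$), feeding in the polynomial bound on $\|\nabla_\bx\log f\|$, and controlling the resulting $y$-integral via the conditional moment hypothesis (with the usual Cauchy--Schwarz device for odd powers). The only cosmetic difference is in the order of operations --- you apply the fundamental theorem of calculus along the segment from $\bx'$ to $\bx$ and then Tonelli, whereas the paper first differentiates $\pp(Y\ge t|\bbx=\bx)$ under the integral sign via dominated convergence (dominating by $g(y)h(y)$ on a compact set) and then invokes the mean value inequality; these are two ways of organizing the same calculation.
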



\section{Feature Ordering by Conditional Independence (FOCI)}\label{focisec}
In this section we propose a new variable selection algorithm for multivariate regression using a forward stepwise algorithm based on our measure of conditional dependence. The commonly used variable selection methods in the statistics literature use linear or  additive models. This includes classical methods~\cite{breiman95, gm93,chendonoho94, tibs96, efron04, friedman91, htf01, miller02} as well as modern ones~\cite{candestao07,   zou06, zouhastie05, yuanlin06, fanli01,  ravi09}. These methods are powerful and widely used in practice. However, they sometimes run  into problems when significant interaction effects or nonlinearities are present. We will later show an  example where methods based on linear and additive models fail to select any of the relevant predictors, {\it even in the complete absence of noise}.

Such problems can sometimes be overcome by model-free methods~\cite{Candes18, ho98, amitgeman97, breiman96, fs96, htf01, bfos84, battiti94, vergaraestevez14, breiman01}. These, too, are powerful and widely used techniques, and they perform better than model-based methods if interactions are present. On the flip side, their theoretical foundations are usually weaker than those of model-based methods.

The method that we are going to propose below, called {\it Feature Ordering by Conditional Independence~(FOCI)}, attempts to combine the best of both worlds by being fully model-free, as well as having a proof of consistency  under a set of assumptions.


The method is as follows. Let $Y$ be the response variable and let $\bbx = (X_j)_{1\le j\le p}$ be the set of predictors. The data consists of $n$ i.i.d.~copies of $(Y,\bbx)$. First, choose $j_1$ to be the index $j$ that maximizes $T_n(Y, X_j)$. Having obtained $j_1,\ldots, j_k$,  choose $j_{k+1}$ to be the index $j\notin\{j_1,\ldots, j_{k}\}$ that maximizes $T_n(Y, X_j|X_{j_1},\ldots,X_{j_{k}})$. Continue like this until arriving at the first $k$ such that $T_n(Y, X_{j_{k+1}}|X_{j_1},\ldots,X_{j_{k}})\le 0$, and then declare the chosen subset to be $\hat{S} := \{j_1,\ldots,j_k\}$. If there is no such $k$, define $\hat{S}$ to be the whole set of variables. It may also happen that $T_n(Y, X_{j_1})\le 0$. In that case declare $\hat{S}$ to be empty.



Although it is not required theoretically, we recommend that the predictor variables be standardized before running the algorithm. We will see later that FOCI performs well in examples, even if the true dependence of $Y$ on $\bbx$ is nonlinear in a complicated way.  In the next section we prove the consistency of FOCI under a set of assumptions on the law of $(Y,\bbx)$. 


If computational time is not an issue, one can try to add $m\ge 2$ variables at each step instead of just one. Although we do not explore this idea in this paper, it is possible that this gives improved results in certain situations. Similarly, one can try a forward-backward version of FOCI, analogous to the forward-backward version of ordinary stepwise selection. 

One can also consider implementing a forward stepwise algorithm like FOCI with other measures of conditional dependence. To the best of our knowledge, that has not yet been done. The closest cousin in the literature is an algorithm based on mutual information~\cite{battiti94}, but unlike FOCI, it does not have a well-defined stopping rule.

One deficiency of FOCI is that it only selects a subset of predictors, without actually fitting a predictive model. Following a suggestion from Rob Tibshirani, we recommend doing the following: First select a subset using FOCI, and then use random forests~\cite{breiman01} to fit a predictive model with the selected variables. This has two advantages over simply fitting random forests with the full set of predictors: (1) It picks out a small set of `important' variables, which may be useful for various reasons, and (2) it is computationally much less expensive. We saw that in real datasets, the prediction error of FOCI followed by random forests is only slightly worse than fitting random forests with the full set of predictors. On the other hand, the number of variables selected by FOCI is usually very small compared to the total number of variables. Some examples are given in Section \ref{examplesec}.

The stopping rule for FOCI may not be the best one. One can think of various other ways of using our measure of conditional (or any other) for variable selection. Our stopping rule seems to work well in practice, and we are able to prove consistency of variable selection for this rule. It is possible that there are other, better rules, which are also provably consistent. This merits further investigation.  

\section{Consistency of FOCI}\label{consistencysec}
Let $(Y,\bbx)$ be as in the previous section. For any subset of indices $S\subseteq \{1,\ldots, p\}$, let $\bbx_S:= (X_j)_{j\in S}$, and let $S^c := \{1,\ldots,p\}\setminus S$. In the machine learning literature, a subset $S$ is sometimes called {\it sufficient} \cite{vergaraestevez14} if $Y$ and $\bbx_{S^c}$ are conditionally independent given $\bbx_S$. This includes the possibility that  $S$ is the empty set, when it simply means that $Y$ and $\bbx$ are independent. Sufficient subsets are known as {\it Markov blankets} in the literature on graphical models~\cite[Section 3.2.1]{pearl88}, and are closely related to the concept of {\it sufficient dimension reduction} in classical statistics~\cite{cook07, ac09, li91}. If we can find a small subset of predictors that is sufficient, then our job is done, because these predictors contain all the relevant predictive information about $Y$ among the given set of predictors, and the statistician can then fit a predictive model based on this small subset of predictors.

Define $Q(\emptyset) := 0$, and for any nonempty set $S\subseteq \{1,\ldots,p\}$, let 
\begin{equation}\label{qsdef}
Q(S) := \int \var(\pp(Y\ge t|\bbx_S)) d\mu(t),
\end{equation}
where $\mu$ is the law of $Y$. We will prove later (Lemma \ref{monotonethm}) that $Q(S') \ge Q(S)$ whenever $S'\supseteq S$, with equality if and only $Y$ and $\bbx_{S'\setminus S}$ are conditionally independent given $\bbx_S$.    Thus if $S'\supseteq S$, the difference $Q(S')-Q(S)$ is a measure of how much extra predictive power is added by appending $\bbx_{S'\setminus S}$ to the set of predictors $\bbx_S$. 

Let $\delta$ be the largest number such that for any {\it insufficient} subset $S$, there is some $j\notin S$ such that $Q(S\cup \{j\})\ge Q(S)+\delta$. In other words, if $S$ is insufficient, there exists some index $j\notin S$ such that appending $X_j$ to $\bbx_S$ increases the predictive power by at least $\delta$. The main result of this section, stated below,  says that if $\delta$ is not too close to zero, then under some regularity assumptions on the law of $(Y,\bbx)$, the subset selected by FOCI is  sufficient with high probability. Note that a sparsity assumption is hidden in the condition that $\delta$ is not very small, because the definition of $\delta$ ensures that there is at least one sufficient subset of size $\le 1/\delta$. An interpretation of $\delta$ in the familiar setting of linear regression with Gaussian predictors is discussed in the next section. 

To prove our result, we  need the following two technical assumptions on the joint distribution of $(Y,\bbx)$. They are generalizations of the assumptions (A1) and (A2) from Section \ref{ratesec}. 
\begin{itemize}
\item[(A1$'$)] There are nonnegative real numbers $\beta$ and $C$ such that for any set $S\subseteq \{1,\ldots,p\}$ of size $\le 1/\delta+2$, any $\bx,\bx'\in \rr^S$ and any  $t\in \rr$,
\begin{align*}
&|P(Y\ge t|\bbx_S = \bx)  - P(Y\ge t|\bbx_S = \bx')|\notag\\
&\le C(1+\|\bx\|^\beta+\|\bx'\|^\beta) \|\bx-\bx'\|.
\end{align*}
\item[(A2$'$)] There are positive numbers $C_1$ and $C_2$ such that for any $S$ of size $\le 1/\delta+2$ and any $t>0$, $\pp(\|\bbx_S\|\ge t)\le C_1 e^{-C_2t}$. 
\end{itemize}
Proposition \ref{conditionprop} shows that the above assumptions are satisfied in a wide variety of situations. The following theorem shows that under the above assumptions, the subset chosen by FOCI is sufficient with high probability.
\begin{thm}\label{selectthm}
Suppose that $\delta >0$, and that the assumptions \textup{(A1$'$)} and \textup{(A2$'$)} hold. Let $\hat{S}$ be the subset selected by FOCI with a sample of size $n$. There are positive real numbers $L_1$, $L_2$  and $L_3$ depending only on $C$, $\beta$, $C_1$, $C_2$ and $\delta$ such that $\pp(\hat{S} \textup{ is sufficient})\ge 1- L_1p^{L_2}e^{-L_3n}$.
\end{thm}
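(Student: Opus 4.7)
The plan is to reduce the theorem to a deterministic analysis of greedy maximization of the functional $Q$, combined with a uniform exponential concentration bound for $T_n$ over subsets of bounded size. First, I would unwind the definition \eqref{tdef0} using the law of total variance to obtain the algebraic identity
\[
T(Y, X_j \mid \bbx_S) = \frac{Q(S \cup \{j\}) - Q(S)}{V_0 - Q(S)}, \qquad V_0 := \int \var(1_{\{Y\ge t\}})\, d\mu(t),
\]
valid for any $S \subseteq \{1,\ldots,p\}$ and any $j \notin S$, with $V_0 \le 1/4$ and $V_0 - Q(S) > 0$ under the non-degeneracy implicit in the setup. Because the denominator does not depend on $j$, the population argmax over $j$ of $T$ coincides with the argmax of $Q(S\cup\{j\})$; thus population-level FOCI is exactly greedy maximization of $Q$, and by the definition of $\delta$, each greedy step applied to an insufficient set raises $Q$ by at least $\delta$. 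Since $Q \le V_0$, greedy reaches a sufficient set within at most $K := \lceil V_0/\delta \rceil$ steps, which fits comfortably inside the size bound $1/\delta+2$ in (A1$'$)--(A2$'$).

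The analytic heart of the proof is a uniform exponential concentration inequality: for $\epsilon := \delta/(4V_0)$, there are constants $c, C > 0$ depending only on $\delta, C, \beta, C_1, C_2$ such that for every fixed $S$ with $|S| \le K$ and every $j \notin S$,
\[
\pp\bigl(|T_n(Y, X_j\mid\bbx_S) - T(Y, X_j\mid\bbx_S)| > \epsilon\bigr) \le C e^{-cn}.
\]
I would derive this by decomposing $|T_n - T| \le |T_n - \ee T_n| + |\ee T_n - T|$. The bias term is controlled by the nearest-neighbor regularity arguments that power Theorem \ref{ratethm}, which under (A1$'$) and (A2$'$) gives $|\ee T_n - T| \to 0$ and hence $\le \epsilon/2$ for large $n$. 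The stochastic term is handled by McDiarmid's inequality: replacing a single observation alters only $O(\log n)$ nearest-neighbor relations (by a standard counting argument for the number of points having a given sample as nearest neighbor, combined with (A2$'$) for tail control), so both the numerator and denominator of $T_n$ have bounded differences of order $(\log n)/n$, yielding $\pp(|T_n - \ee T_n| > \epsilon/2) \le 2\exp(-c' n/\log^2 n)$. A union bound over the at most $\binom{p}{K}\cdot p \le p^{K+1}$ subset-index pairs then produces the claimed $L_1 p^{L_2} e^{-L_3 n}$.

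With this concentration in hand, the final step is a purely deterministic analysis on the good event $E$ of uniform concentration. If FOCI halts at some step $k \le K$, then $\max_{j \notin \hat{S}_k} T_n \le 0$, so on $E$ we get $\max_{j \notin \hat{S}_k} T(Y, X_j \mid \bbx_{\hat{S}_k}) \le \epsilon$, which via the identity forces $\max_j (Q(\hat{S}_k\cup\{j\}) - Q(\hat{S}_k)) \le \epsilon V_0 < \delta$; by the definition of $\delta$, $\hat{S}_k$ must then be sufficient. If FOCI instead survives past step $K$, then at each step $k \le K-1$ where $\hat{S}_k$ is insufficient, letting $j^*$ be the population maximizer (with $T(j^*) \ge \delta/V_0$), the FOCI selection $\hat{j}_{k+1}$ satisfies $T(\hat{j}_{k+1}) \ge T_n(\hat{j}_{k+1}) - \epsilon \ge T_n(j^*) - \epsilon \ge T(j^*) - 2\epsilon$, so $Q(\hat{S}_{k+1}) - Q(\hat{S}_k) \ge \delta - 2\epsilon V_0 = \delta/2$; such $\delta/2$-per-step progress is impossible for $K$ consecutive steps with $Q \le V_0$, so some $\hat{S}_k$ with $k \le K$ is sufficient, and sufficiency is preserved by adding further variables (conditional independence $Y \perp \bbx_{\hat{S}^c}\mid \bbx_{\hat{S}}$ immediately gives $Y \perp \bbx_{(\hat{S}\cup\{j\})^c}\mid \bbx_{\hat{S}\cup\{j\}}$).

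The main obstacle is the exponential concentration bound: Theorem \ref{ratethm}'s $O_P$ statement is not strong enough, and upgrading it requires a careful bounded-difference analysis of the nearest-neighbor statistic, in particular bounding the number of data points that have any given sample among their nearest neighbors -- a quantity naturally controlled by geometric $\log$-factors under (A2$'$) -- together with a direct bound on $|\ee T_n - T|$ via the Lipschitz hypothesis. Once this concentration is available, the remainder of the argument is a clean deterministic tracking of how sample-FOCI mimics the population greedy on $Q$.
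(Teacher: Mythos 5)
Your high-level strategy mirrors the paper's: reduce to greedy maximization of $Q$ on a population-level good event, plus a uniform exponential concentration bound and a union bound over small subsets. The algebraic identity you write for $T(Y,X_j\mid\bbx_S)$ is correct and is implicitly what the paper exploits when it notes that maximizing $T_n(Y,X_j\mid\bbx_{S_{k-1}})$ over $j$ is the same as maximizing $Q_n(Y,\bbx_{S_{k-1}\cup\{j\}})$. However, there are two substantive gaps and a couple of smaller slips.

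First, your bounded-difference estimate does not deliver the stated rate. You assert that replacing one observation changes $O(\log n)$ nearest-neighbor relations, giving $\pp(|T_n-\ee T_n|>\epsilon/2)\le 2\exp(-c' n/\log^2 n)$. But $\exp(-c'n/\log^2 n)$ is strictly weaker than $e^{-L_3 n}$ for any fixed $L_3>0$, so a union bound over $p^{K+1}$ subsets cannot produce the claimed $L_1 p^{L_2}e^{-L_3 n}$. The correct geometric fact (Lemma~\ref{geomlmm} in the paper, the classical ``kissing number'' / cone argument from Yukich) is that the number of points having any given sample among their nearest neighbors is bounded by a \emph{constant} $C(p)$ depending only on the dimension, with no $\log n$. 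This gives bounded differences of order $C(p)/n$ and hence the pure exponential rate. Note also that the paper's McDiarmid argument (Lemma~\ref{concthm}) is not a one-liner: it must condition on cluster structure and treat the tie-breaking randomness and the rank statistics with some care; this bookkeeping is precisely what your sketch elides.

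Second, you choose to concentrate $T_n$ directly rather than $Q_n$ and $S_n$ separately, and this introduces a ratio issue you do not address. To control $|T_n-\ee T_n|$ and $|\ee T_n - T|$, you need the empirical denominator $S_n(Y,\bbx_S)$ bounded away from zero uniformly over the $p^{K+1}$ subsets, on the good event. The paper sidesteps this entirely: because the denominator $S_n(Y,\bbx_{S_{k-1}})$ does not depend on $j$ and is nonnegative, both the argmax and the sign of the stopping criterion are determined by $Q_n$ alone, so the whole analysis can be run at the level of $Q_n$ (Lemmas~\ref{newlmm1}--\ref{newlmm2}), and one only ever needs the one-sided inequality ``variance of a $[0,1]$-valued variable is $\le 1/4$.'' Relatedly, your bias control (``the nearest-neighbor regularity arguments that power Theorem~\ref{ratethm} give $|\ee T_n - T|\le\epsilon/2$'') is only hand-waved: Theorem~\ref{ratethm} is an $O_P$ statement, and the quantitative deterministic bias bound you actually need is for $\ee Q_n - Q$ (as in Lemma~\ref{qqnlmm0}), not for the ratio.

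Finally, a minor arithmetic slip: with $\delta/2$-per-step progress in the sample argument, $K=\lceil V_0/\delta\rceil$ is too small to force the contradiction $K\cdot(\delta/2)>V_0$; you need roughly $K\ge 2V_0/\delta+1$. The paper avoids this issue by setting $K=\lfloor 1/\delta+2\rfloor$ and tracking the constants in $\delta/8$ and $\delta/2$ so that the accumulated increase exceeds $1/4$. This is easily fixable but worth noting since $K$ also enters assumptions (A1$'$)--(A2$'$) through the $1/\delta+2$ bound on subset size.
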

The main implication of Theorem \ref{selectthm} is that if $\delta$ is not too close to zero, and $n\gg \log p$, then with high probability, FOCI chooses a sufficient set of predictors. In particular, this theorem allows $p$ to be quite large compared to $n$, as long as $\delta$ is not too small. 


Although Theorem \ref{selectthm} works under the assumption that $\delta$ is fixed (because the constants $L_1$, $L_2$ and $L_3$ depend on $\delta$ in an unspecified manner), it is possible that a deeper analysis can allow us to take $\delta \to 0$ as $n\to\infty$. For that, the dependences of $L_1$, $L_2$ and $L_3$ on $\delta$ will have to be made explicit. It is possible that such an improvement can be made by carefully reworking the steps in the proof, to at least get a consistency result when $\delta \to 0$ slower than $(\log n)^{-1}$. To get anything better than that will probably require entirely new ideas. 

Theorem \ref{selectthm} gives conditions under which the subset selected by FOCI is sufficient with high probability. This is intended to be useful for practitioners, by giving them confidence that the selected subset is indeed sufficient. In practice, as we will see in Section \ref{examplesec}, the subsets selected by FOCI are quite small. However, Theorem \ref{selectthm} does not guarantee the smallness of the subset size. It would be desirable to have an improved version of Theorem~\ref{selectthm}, which not only guarantees that $\hat{S}$ is sufficient with high probability, but also that $|\hat{S}|$ is small with high probability. As of now, we do not know how to prove such a result.

\section{Interpreting $\delta$}\label{gaussiansec}
In this section we will try to understand the meaning of the quantity $\delta$ defined in the previous section, in the familiar context of linear regression with normally distributed predictor variables. Suppose that $\bbx$ is a normal random vector with zero mean and arbitrary covariance structure, and that
\[
Y = \beta\cdot \bbx + \ve,
\]
where $\beta\in \rr^p$ is a  vector of coefficients and $\ve\sim N(0,\sigma^2)$ is independent of $\bbx$, with nonzero $\sigma$. Then $Y$ is also a normal random variable with mean zero. Let $\tau^2 := \var(Y)$. Let $\delta$ be the quantity defined in the previous section, for this $Y$ and $\bbx$. 

For any nonempty $S\subsetneq \{1,\ldots, p\}$ and any $j\in \{1,\ldots,p\}\setminus S$, let $\rho(S,j)$ be the partial $R^2$ of $Y$ and $X_j$ given $\bbx_S$. Let $\rho(\emptyset, j)$ be the usual $R^2$ (that is, squared correlation) between $Y$ and $X_j$. 

Note that if $S$ is a sufficient set of predictors, then $\rho(S, j)=0$ for any $j\notin S$. Conversely, if $\rho(S,j)=0$ for all $j\notin S$, then by normality, $Y$ and $\bbx_{S^c}$ are conditionally independent given $\bbx_S$, and hence $S$ is sufficient. Thus, $S$ is sufficient if and only if $\rho(S,j)=0$ for all $j\notin S$. So if $S$ is insufficient, then there is at least one $j\notin S$ such that $\rho(S,j)>0$. 

Let $\delta'$ be the largest number such that for any insufficient set $S$, there is some $j\notin S$ such that  $\rho(S,j)\ge \delta'$. The following result shows that $\delta'$ is comparable to $\delta$, up to constant multiples depending only on $\sigma$ and $\tau$. 
\begin{thm}\label{deltacompare}
Let all notations be as above. There are positive universal constants $C_1$ and $C_2$ such that 
\[
\frac{C_1\sigma^2}{\tau^2} \delta' \le \delta \le \frac{C_2 \tau^2}{\sigma^2} \delta'.
\]
\end{thm}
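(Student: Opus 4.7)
In the Gaussian setting $Q(S)$ depends on $S$ only through the multiple $R^2$ of $Y$ on $\bbx_S$, so both $\delta$ and $\delta'$ can be read off from a single scalar function. My plan is to (i) derive a closed form $Q(S) = F(\rho_S)$ where $\rho_S := R^2_{Y,\bbx_S} = 1 - \sigma_S^2/\tau^2$ with $\sigma_S^2 := \var(Y \mid \bbx_S)$; (ii) compute $F'$ explicitly and observe that $F$ is convex; (iii) use the standard identity $\rho_{S \cup \{j\}} - \rho_S = (\sigma_S^2/\tau^2)\,\rho(S,j)$; and (iv) combine this with $\sigma^2 \le \sigma_S^2 \le \tau^2$ (conditioning monotonicity) to control $F'$ on the relevant subinterval of $[0,1)$.

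\textbf{Key computation.} Since $Y \mid \bbx_S$ is $N(\ee(Y\mid \bbx_S), \sigma_S^2)$, we have $\pp(Y \ge t\mid \bbx_S) = 1 - \Phi((t - \ee(Y\mid\bbx_S))/\sigma_S)$. Writing $\Phi(V) = \pp(Z \le V \mid V)$ for an independent standard normal $Z$ and using the joint Gaussianity of $(Z_1 - V, Z_2 - V)$ with $Z_1,Z_2$ i.i.d.\ standard normal, a short calculation gives
\[
\var(\pp(Y \ge t \mid \bbx_S)) = \Phi_2(t/\tau, t/\tau; \rho_S) - \Phi(t/\tau)^2,
\]
where $\Phi_2(\cdot,\cdot;\rho)$ is the standard bivariate normal CDF with correlation $\rho$. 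Integrating against $\mu = N(0,\tau^2)$ and substituting $u = t/\tau$ eliminates $\tau$ and shows $Q(S) = F(\rho_S)$ with
\[
F(\rho) := \int_{\rr}\bigl[\Phi_2(u,u;\rho) - \Phi(u)^2\bigr]\phi(u)\,du.
\]
Differentiating under the integral via $\partial_\rho \Phi_2(u,u;\rho) = \phi_2(u,u;\rho)$ reduces to a single Gaussian integral that evaluates cleanly to
\[
F'(\rho) = \frac{1}{2\pi\sqrt{(1-\rho)(3+\rho)}},
\]
which is strictly positive and increasing on $[0,1)$. Hence $F$ is convex, $F'(0) = 1/(2\pi\sqrt{3})$, and $F'(1 - \sigma^2/\tau^2) \le (2\pi\sqrt{3})^{-1}(\tau/\sigma)$.

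\textbf{Both bounds.} Because $\sigma_S^2 \ge \sigma^2$ for every $S$, every $\rho_S$ and $\rho_{S \cup \{j\}}$ that arises lies in $[0,\, 1 - \sigma^2/\tau^2]$; on this interval $F'$ is bounded below by $F'(0)$ and above by $(2\pi\sqrt{3})^{-1}(\tau/\sigma)$. For the lower bound, given any insufficient $S$, pick $j$ with $\rho(S,j) \ge \delta'$; convexity yields
\[
Q(S\cup\{j\}) - Q(S) \ge F'(0)(\rho_{S\cup\{j\}} - \rho_S) = F'(0)\,\frac{\sigma_S^2}{\tau^2}\rho(S,j) \ge \frac{F'(0)\sigma^2}{\tau^2}\delta',
\]
so $\delta \ge (C_1\sigma^2/\tau^2)\delta'$ with $C_1 = F'(0)$. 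For the upper bound, pick an insufficient $S^*$ achieving $\max_j \rho(S^*,j) = \delta'$ (the min in the definition of $\delta'$ is attained since there are finitely many subsets); the reverse convexity inequality $F(b) - F(a) \le F'(b)(b-a)$ gives, for every $j \notin S^*$,
\[
Q(S^*\cup\{j\}) - Q(S^*) \le F'(1 - \sigma^2/\tau^2)\,\frac{\sigma_{S^*}^2}{\tau^2}\rho(S^*,j) \le \frac{\tau}{2\pi\sigma\sqrt{3}}\,\delta',
\]
which is bounded by $C_2(\tau^2/\sigma^2)\delta'$ since $\tau \ge \sigma$. Taking max over $j$ and noting $\delta \le \max_j(Q(S^*\cup\{j\}) - Q(S^*))$ finishes the proof. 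The only real work is the explicit identification of $F$ and $F'$; after that the bounds are routine convex-function inequalities, and the role of $\sigma/\tau$ is exactly to keep the relevant $\rho$'s bounded away from $1$ so that $F'$ is bounded.
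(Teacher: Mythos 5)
Your proposal is correct, and it takes a genuinely different --- and in fact sharper --- route than the paper's. The paper goes through Lemma~\ref{normbdlmm} to obtain the two-sided bound of Lemma~\ref{gaussianlmm}, whose lower bound carries an exponential factor $e^{-\beta^2/\alpha^2}$ with $\alpha^2=\var(Y\mid\bbx,\bbz)$ and $\beta^2=\var(Y\mid\bbx)$. You instead observe that in the Gaussian model $Q(S)$ is a single closed-form function of the multiple $R^2$: $Q(S)=F(\rho_S)$ with $F(\rho)=\int[\Phi_2(u,u;\rho)-\Phi(u)^2]\phi(u)\,du$ and, via $\partial_\rho\Phi_2(u,u;\rho)=\phi_2(u,u;\rho)$ and a Gaussian integral, $F'(\rho)=\bigl(2\pi\sqrt{(1-\rho)(3+\rho)}\bigr)^{-1}$, which I verified. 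Since $F'$ is increasing on $[0,1)$ and every $\rho_S$, $\rho_{S\cup\{j\}}$ lies in $[0,\,1-\sigma^2/\tau^2]$ (as $\sigma_S^2\ge\sigma^2$), the two bounds reduce to elementary convexity inequalities, with $C_1=C_2=1/(2\pi\sqrt3)$. The increment identity $\rho_{S\cup\{j\}}-\rho_S=(\sigma_S^2/\tau^2)\rho(S,j)$ and the final bookkeeping (attainment of the min over the finitely many subsets, $\tau\ge\sigma$) are all fine. One notational nit: the variance formula should read $\Phi_2(-t/\tau,-t/\tau;\rho_S)-\Phi(-t/\tau)^2$, but the symmetry of $\mu=N(0,\tau^2)$ under $t\mapsto-t$ makes this immaterial after integration, as you implicitly use.

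Beyond being different, your argument appears to patch a gap in the published proof of the lower bound $\delta\ge C_1(\sigma^2/\tau^2)\delta'$. In the final step the paper invokes $\alpha^2\le\beta^2$ to replace $e^{-\beta^2/\alpha^2}$ by $e^{-1}$, but the inequality runs the wrong way: $\beta^2/\alpha^2\ge1$ gives $e^{-\beta^2/\alpha^2}\le e^{-1}$, whereas a lower bound is needed, and the best one available from Lemma~\ref{gaussianlmm} over the admissible range $\sigma^2\le\alpha^2\le\beta^2\le\tau^2$ is $e^{-\tau^2/\sigma^2}$, which cannot be absorbed into a universal constant (take $\alpha^2=\sigma^2$, $\beta^2=\tau^2$ and let $\tau/\sigma\to\infty$). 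The slack originates in the lower bound of Lemma~\ref{normbdlmm}, which degrades as $b\to\infty$ even though the true variance approaches $1/4$. Your closed-form computation of $F$ and $F'$ bypasses Lemma~\ref{normbdlmm} entirely and delivers the stated theorem without any exponential dependence, so I would regard your route as the correct one.
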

Thus, in the Gaussian setup, $\delta$ is equivalent to the analogous quantity computed using the usual partial $R^2$ instead of our measure of conditional dependence. The above result is proved in Section \ref{gaussianproof}.

\section{Examples}\label{examplesec}
In this section we present some applications of our methods to simulated examples and  real datasets. In all examples, the covariates were standardized prior to the analysis.
\begin{ex}\label{sumUnifEx}
Let $X_1$ and $X_2$ be independent Uniform$[0, 1]$ random variables, and define 
\[
Y := X_1 + X_2\pmod 1.
\]
The relationship between $Y$ and $(X_1,X_2)$ has three main features:
\begin{enumerate}
\item $Y$ is a function of $(X_1,X_2)$, 
\item unconditionally, $Y$ is independent of $X_2$, and 
\item conditional on $X_1$, $Y$ is a function of $X_2$. 
\end{enumerate}
Let $n=1000$. In about $95$ percent of our simulations, $T_n(Y, (X_1,X_2))$ took values between $0.88$ and $0.94$,  $T_n(Y,X_2|X_1)$ was between $0.88$ and $0.94$, and $T_n(Y,X_2)$ was between $-0.07$ and $0.07$, in agreement with the above properties. Other measures of conditional dependence, such as conditional distance correlation~\cite{Wang15}, were unable to gauge the strength of the conditional dependency between $Y$ and $X_2$ given $X_1$.
\end{ex}
\begin{ex}
Let $X_1$ and $X_2$ be independent~$N(0,1)$ random variables, and define 
\[
Y := X_1^2 + X_2^2, \ \ \ Z:= \arctan(X_1/X_2).
\]
Then unconditionally, $Y$ is independent of $Z$, and conditional on $X_1$, $Y$ is a function of $Z$. Let $n=1000$. In about $95$ percent of our simulations, $T_n(Y, Z)$ took values between $-0.06$ and $0.05$,  and $T_n(Y,Z|X_1)$ was between $0.79$ and $0.84$, in agreement with the above properties. Again, other measures of conditional dependence were unable to capture the strength of the conditional dependence between $Y$ and $Z$ given $X_1$.

\end{ex}

\begin{ex}\label{simex}
Let $X_1,\ldots, X_{1000}$ be independent~$N(0,1)$ random variables and let 
\begin{equation*}
Y = X_1X_2 + \sin(X_1X_3).
\end{equation*}
With a sample of size $2000$ from the above model, FOCI was able to select the correct subset $\{X_1,X_2,X_3\}$ more than $90$ percent of the time. On the other hand, popular variable selection algorithms based on linear models, such as ordinary forward stepwise, Lasso~\cite{tibs96}, the Dantzig selector~\cite{candestao07}, and SCAD~\cite{fanli01} were essentially never able to pick out the correct subset. (The tuning parameters for Lasso, Dantzig selector and SCAD were chosen using $10$-fold cross-validation, and the AIC criterion was used for stopping in forward stepwise.) Even methods based on nonlinear additive models, such as SPAM~\cite{ravi09}, were generally unable to find the correct subset. The only other methods that successfully detected the importance of $X_1$, $X_2$ and $X_3$ were random forests~\cite{breiman01} and mutual information~\cite{battiti94}, but the computational times for these methods were many times greater than that of FOCI. 
\end{ex}

\begin{ex}\label{simex2}
Again, let $X_1,\ldots, X_{1000}$ be independent~$N(0,1)$ random variables and let 
\begin{equation*}
Y = X_1X_2 + X_1 - X_3 + \varepsilon,
\end{equation*}
where $\varepsilon\sim N(0,1)$ is a noise term that is independent of $X_i$'s. With a sample of size $2000$ from this model, FOCI was able to select the correct subset $\{X_1,X_2,X_3\}$ in $99.5$ percent of simulations. Methods based on linear models were generally able to pick out $X_1$ and $X_3$ but almost never detected the role of $X_2$. SPAM was able to pick out all three variables in about a quarter of the simulations. Again, the only other methods that successfully detected the importance of $X_1$, $X_2$ and $X_3$ were random forests and mutual information, but at a far greater computational cost than FOCI. 
\end{ex}

\begin{ex}
We tried out FOCI on the following three benchmark real data examples, all from the UCI Machine Learning Repository~\cite{dg19}:
\begin{enumerate}
\item \textit{Spambase data}.  Consists of $4601$ observations, each corresponding to one email, and $57$ features for each observation. The response variable is binary, indicating whether the email is a spam email or not. 
\item \textit{Polish companies bankruptcy data}. Consists of $19967$ observations with $64$ features. Each sample corresponds to a company in Poland. The response variable is binary, indicating whether or not the company was bankrupted after a period of time. 
\item \textit{Million song data}. Consists of $515345$ observations with $90$ features. Each sample corresponds to the audio features of a song published sometime ranging from 1922 to 2011. The response variable is the year that the song was published. 
\end{enumerate}
FOCI was compared with forward stepwise, Lasso, Dantzig selector and SCAD. For each method, after selecting the variables, a predictive model was fitted to a training set using random forests. As before, the tuning parameters for Lasso, Dantzig selector and SCAD were chosen using $10$-fold cross-validation, and the AIC criterion was used for stopping in forward stepwise. Mean squared prediction errors (MSPE) were estimated using a test set. The sizes of the selected subsets and the MSPEs are reported in Table~\ref{newtable}. In all three examples, FOCI attained similar prediction errors as the other methods, but with a significantly fewer number of variables.
\end{ex}

\begin{table}[t]
	\renewcommand\arraystretch{1.2}
	\begin{center}
		\begin{scriptsize}
			\caption{Applications of FOCI to real data.\label{newtable}}
			\begin{tabular}{lcccccccc}
				\toprule
				& \multicolumn{2}{c}{Spambase data} & & \multicolumn{2}{c}{Polish companies data} & & \multicolumn{2}{c}{Million song data}\\
				\cmidrule{2-3} \cmidrule{5-6} \cmidrule{8-9}
				Method & Subset size& MSPE & & Subset size & MSPE & & Subset size & MSPE\\
			\midrule
			FOCI & 14 & 0.045 & & 5 & 0.022 & & 17 & 88.085\\
			Forward stepwise & 56 & 0.039 & & 62 & 0.020 && 90 & 87.226\\ 
			Lasso & 55 & 0.041  & & 48 & 0.021 & & 86 & 87.260\\
			Dantzig selector & 53 & 0.041 & & 7 & 0.023  & & 90 & 87.226\\
			SCAD & 38 & 0.041 & & 4 & 0.025 & & 85 & 87.319\\
			\bottomrule
			\end{tabular}
		\end{scriptsize}
	\end{center}
\end{table}

\begin{table}[t]
	\renewcommand\arraystretch{1.2}
	\begin{center}
		\begin{scriptsize}
			\caption{Comparison with random forests.\label{FOCIvsRandomForest}}
			\begin{tabular}{lccc}
				\toprule
				Dataset & FOCI subset size$/$Total set size & MSPE FOCI & MSPE random forest\\
				\midrule
				Spambase  & 14/57 & 0.045 & 0.040 \\
				Polish companies & 5/64 & 0.022 & 0.020 \\ 
				Million song & 17/90 & 88.085 & 87.260 \\ 
				\bottomrule
			\end{tabular}
		\end{scriptsize}
	\end{center}
\end{table}

\begin{ex}
In Section \ref{focisec}, we recommended fitting a predictive model using random forests with the set of variables selected by FOCI. To test the validity of this approach, we computed the prediction errors for random forests with the full set of predictors versus FOCI followed by random forests, in the three real datasets considered above. The results are displayed in Table \ref{FOCIvsRandomForest}. We see that FOCI followed by random forests attains almost the same MSPE as random forests with the full set of variables; but in each case, the number of variables selected by FOCI is small compared to the total number of variables.
\end{ex}

\section{Restatement of Theorems \ref{popthm} and \ref{sampthm}}\label{resultsec2}
Beginning with this section, the rest of the paper is devoted to proofs. Throughout the rest of the manuscript, whenever we say that a random variable $Y$ is a function of another variable $X$, we will mean that $Y=f(X)$ almost surely for some measurable function $f$. 

First, we focus on Theorems \ref{popthm} and \ref{sampthm}. To prove these theorems, it is convenient to  break up the estimators into pieces. This gives certain `elaborate' versions of Theorems~\ref{popthm} and~\ref{sampthm}, which are interesting in their own right. First, suppose that $p\ge 1$. Define 
\begin{equation}\label{qndef1}
Q_n(Y, \bbz|\bbx) := \frac{1}{n^2}\sum_{i=1}^n (\min\{R_i, R_{M(i)}\} - \min\{R_i, R_{N(i)}\})
\end{equation}
and 
\begin{equation}\label{sndef1}
S_n(Y,\bbx) := \frac{1}{n^2}\sum_{i=1}^n (R_i - \min\{R_i, R_{N(i)}\}).
\end{equation}
Let $\mu$ denote the law of $Y$.  We will see later that the following theorem implies both Theorem \ref{popthm} and Theorem \ref{sampthm} in the case $p\ge 1$.
\begin{thm}\label{mainthm}
Suppose that $p\ge 1$. As $n\to \infty$, the statistics $Q_n(Y, \bbz|\bbx) $ and $S_n(Y,\bbx)$ converge almost surely to deterministic limits. Call these limit $a$ and $b$, respectively. Then
\begin{enumerate}
\item[\textup{(i)}] $0\le a\le b$.
\item[\textup{(ii)}] $Y$ is conditionally independent of $\bbz$ given $\bbx$ if and only if $a=0$.
\item[\textup{(iii)}] $Y$ is conditionally a function of $\bbz$ given $\bbx$ if and only if $a=b$.
\item[\textup{(iv)}] $Y$ is not a function of $\bbx$ if and only if $b>0$.
\end{enumerate}
Explicitly, the values of $a$ and $b$ are given by
\begin{align*}
a = \int \ee(\var(\pp(Y\ge t|\bbz, \bbx)|\bbx)) d\mu(t)
\end{align*}
and 
\begin{align*}
b &= \int \ee(\var(1_{\{Y\ge t\}}|\bbx))d\mu(t)\\
&= \int \ee(\pp(Y\ge t|\bbx)(1-\pp(Y\ge t|\bbx)))d\mu(t).
\end{align*}
\end{thm}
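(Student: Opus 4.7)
The plan is to first reduce both statistics to averages involving the empirical distribution function $\hat F_n(t) := \frac 1 n \sum_j 1_{\{Y_j \le t\}}$. Since $R_i = n\hat F_n(Y_i)$ and $\hat F_n$ is nondecreasing, we have $\min(R_i, R_j) = n\hat F_n(\min(Y_i, Y_j))$, hence
\begin{equation*}
Q_n = \tfrac 1 n\sum_{i=1}^n\bigl[\hat F_n(\min(Y_i, Y_{M(i)})) - \hat F_n(\min(Y_i, Y_{N(i)}))\bigr], \quad S_n = \tfrac 1 n\sum_{i=1}^n\bigl[\hat F_n(Y_i) - \hat F_n(\min(Y_i, Y_{N(i)}))\bigr].
\end{equation*}
By Glivenko--Cantelli, $\sup_t |\hat F_n(t) - F(t)| \to 0$ almost surely (where $F$ is the CDF of $Y$), so every occurrence of $\hat F_n$ above may be replaced by $F$ at a uniform $o(1)$ cost. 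The task reduces to computing the almost-sure limits of $\frac 1 n \sum_i h(Y_i, Y_{N(i)})$ and $\frac 1 n \sum_i h(Y_i, Y_{M(i)})$ for the bounded measurable function $h(y, y') := F(\min(y, y'))$, plus the harmless $\frac 1 n \sum_i F(Y_i) \to \ee[F(Y)]$ given by the ordinary strong law.

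The main step, and also the principal obstacle, is a nearest-neighbor strong law: for any bounded measurable $h : \rr^2 \to \rr$,
\begin{equation*}
\frac 1 n \sum_{i=1}^n h(Y_i, Y_{N(i)}) \;\longrightarrow\; \ee[h(Y, Y')] \quad \text{a.s.,}
\end{equation*}
where $(Y, Y')$ are conditionally i.i.d.\ given $\bbx$, and likewise with $M(i)$ replacing $N(i)$ and $(\bbx, \bbz)$ replacing $\bbx$. The intuition is that as $n\to\infty$ the point $\bbx_{N(i)}$ is close to $\bbx_i$, so $Y_{N(i)}$ is asymptotically a fresh draw from the conditional law of $Y$ given $\bbx_i$, independent of $Y_i$. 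Making this rigorous under \emph{no} distributional assumption is delicate; it combines classical nearest-neighbor distance estimates in $\rr^p$, the fact that a regular conditional distribution of $Y$ given $\bbx$ is weakly continuous at almost every $\bbx$ (a Lebesgue-differentiation/martingale fact), and careful bookkeeping of the weak dependence among the $N(i)$'s. Applying this law to $h(y, y') = F(\min(y, y')) = \int 1_{\{y \ge t\}} 1_{\{y' \ge t\}}\, d\mu(t)$ and invoking Fubini and conditional independence gives
\begin{equation*}
\tfrac 1 n \sum_i F(\min(Y_i, Y_{N(i)})) \to \int \ee[\pp(Y\ge t|\bbx)^2]\, d\mu(t),
\end{equation*}
together with its analogue for $M$ and $(\bbx, \bbz)$. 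Combining these with $\ee[F(Y)] = \int \pp(Y \ge t)\, d\mu(t)$ and the identities $\ee[\pp(Y\ge t|\bbx,\bbz)^2] - \ee[\pp(Y\ge t|\bbx)^2] = \ee[\var(\pp(Y\ge t|\bbx,\bbz)|\bbx)]$ and $\pp(Y\ge t) - \ee[\pp(Y\ge t|\bbx)^2] = \ee[\var(1_{\{Y\ge t\}}|\bbx)]$ yields the announced closed forms for $a$ and $b$.

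Properties (i)--(iv) then follow from these formulas together with the law of total variance
\begin{equation*}
\var(1_{\{Y\ge t\}}|\bbx) = \var(\pp(Y\ge t|\bbx, \bbz)|\bbx) + \ee[\var(1_{\{Y\ge t\}}|\bbx, \bbz)|\bbx].
\end{equation*}
Part (i) is immediate: $a \ge 0$ as an integral of nonnegative quantities, and $b - a = \int \ee[\var(1_{\{Y\ge t\}}|\bbx, \bbz)]\, d\mu(t) \ge 0$. For (ii), $a = 0$ iff $\var(\pp(Y\ge t|\bbx,\bbz)|\bbx) = 0$ for $\mu$-a.e.\ $t$ almost surely, iff $\pp(Y\ge t|\bbx, \bbz)$ is $\sigma(\bbx)$-measurable for such $t$, iff $Y \perp \bbz \mid \bbx$. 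For (iii), $a = b$ iff $\var(1_{\{Y\ge t\}}|\bbx, \bbz) = 0$ for $\mu$-a.e.\ $t$ almost surely; a countable-generation argument (pick a dense countable set of $t$'s in the support of $\mu$) then upgrades this to $\sigma(\bbx, \bbz)$-measurability of $Y$ itself. Part (iv) is the analogue of (iii) with $\bbz$ removed: $b = 0$ iff $\var(1_{\{Y\ge t\}}|\bbx) = 0$ for $\mu$-a.e.\ $t$, iff $Y$ is $\sigma(\bbx)$-measurable. Overall, the nearest-neighbor strong law in the second paragraph is the step I expect to be hardest; everything else is essentially a translation between rank-based statistics and conditional expectations.
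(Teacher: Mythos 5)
Your outline reproduces the paper's strategy faithfully: rewrite the rank statistics via the empirical CDF, pass from $\hat F_n$ to $F$ by Glivenko--Cantelli, reduce to computing $\lim \frac1n\sum_i h(Y_i,Y_{N(i)})$ where the limit should be $\ee[h(Y,Y')]$ with $Y,Y'$ conditionally i.i.d.\ given $\bbx$, and then derive (i)--(iv) from the explicit variance formulas together with the law of total variance. The closed forms you give for $a$ and $b$ and the reduction of (i)--(iv) to support/measurability arguments are all correct and match the paper's Lemmas \ref{indepthm1} and \ref{monotonethm} and its treatment of the $a_\bbz,b_\bbz$ thresholds.

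Two of your technical remarks in the hard ``nearest-neighbor strong law'' step, however, do not quite track what the paper does and would be problematic if pursued as stated. First, you invoke ``weak continuity of a regular conditional distribution at almost every $\bbx$ (a Lebesgue-differentiation/martingale fact).'' Under absolutely no assumptions on the law of $\bbx$, no Lebesgue-differentiation statement is available, and martingale convergence gives continuity only along a fixed nested filtration, not along nearest-neighbor perturbations. The paper instead approximates the measurable map $\bx\mapsto G_\bx(t)$ by a continuous function via Lusin's theorem (Lemma \ref{lusinthm}), and controls the discrepancy at the nearest neighbor by the change-of-measure estimate $\ee f(\bbx_{n,1})\le C(p)\ee f(\bbx_1)$ (Lemma \ref{cplmm}), which in turn rests on the geometric bound that at most $C(p)$ points can share a given nearest neighbor (Lemma \ref{geomlmm}). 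This is the mechanism that makes the argument assumption-free. Second, your ``careful bookkeeping of the weak dependence among the $N(i)$'s'' is where the real work of upgrading $\ee(Q_n)\to Q$ (Lemma \ref{expthm}) to almost-sure convergence happens: the paper proves a McDiarmid-type concentration inequality for $Q_n$ (Lemma \ref{concthm}), whose proof requires a careful decomposition of the index set into clusters of tied $\bbx_i$-values and singletons, precisely because a single coordinate change can in principle alter many nearest-neighbor assignments. Your proposal conflates the limit computation and the concentration step into one lemma; that is fine as a statement, but a proof would almost certainly have to reproduce the paper's two-stage structure (expectation plus Borel--Cantelli via bounded differences). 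So: same route, but the two places you flag as ``delicate'' are exactly where the paper's Lusin-plus-geometry device and its concentration lemma carry the argument, and the specific tools you named for them would not work in the assumption-free setting.
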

Next, suppose that $p=0$. Define
\begin{equation}\label{qndef2}
Q_n(Y,\bbz) := \frac{1}{n^2}\sum_{i=1}^n \biggl(\min\{R_i, R_{M(i)}\} - \frac{L_i^2}{n}\biggr)
\end{equation}
and 
\begin{equation}\label{sndef2}
S_n(Y)  := \frac{1}{n^3}\sum_{i=1}^n L_i(n-L_i).
\end{equation}
We will prove later that the following theorem implies Theorems \ref{popthm} and \ref{sampthm}  when $p=0$. 
\begin{thm}\label{basethm}
As $n\to \infty$, $Q_n(Y,\bbz)$ and $S_n(Y)$ converge almost surely to  deterministic limits $c$ and $d$, satisfying the following properties:
\begin{enumerate}
\item[\textup{(i)}] $0\le c\le d$.
\item[\textup{(ii)}] $Y$ is independent of $\bbz$ if and only if $c=0$.
\item[\textup{(iii)}] $Y$ is a function of $\bbz$ if and only if $c=d$.
\item[\textup{(iv)}] $d>0$ if and only if $Y$ not a constant. 
\end{enumerate}
Explicitly,
\[
c = \int \var(\pp(Y\ge t|\bbz)) d\mu(t),
\]
and
\begin{align*}
d &= \int \var(1_{\{Y\ge t\}}) d\mu(t)\\
&=\int \pp(Y\ge t)(1-\pp(Y\ge t)) d\mu(t).
\end{align*}
\end{thm}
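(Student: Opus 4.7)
The plan is to separately establish the almost-sure convergence of $Q_n(Y,\bbz)$ and $S_n(Y)$, identify the limits with the stated integral expressions, and then deduce properties (i)--(iv) from the law of total variance. Both statistics are symmetric functions of the ranks $R_i, L_i$, which are determined by the empirical CDF of $Y$, and, in the case of $Q_n$, of the nearest-neighbor permutation $M$ based on $\bbz$. The guiding idea is that rank-based quantities can be replaced by the corresponding population CDF of $Y$ via Glivenko--Cantelli, and that $Y_{M(i)}$ can be replaced asymptotically by an independent copy of $Y$ drawn from the conditional law given $\bbz_i$ via standard nearest-neighbor consistency.

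First I would handle $S_n(Y)$ and the second piece of $Q_n(Y,\bbz)$. Writing $L_i/n = 1 - \hat F_n(Y_i^-)$ for the empirical CDF $\hat F_n$ of $Y_1,\ldots,Y_n$, Glivenko--Cantelli yields $\sup_y|\hat F_n(y) - F(y)| \to 0$ a.s., so the summands $(L_i/n)(1 - L_i/n)$ are uniformly close to $h(Y_i)(1-h(Y_i))$ with $h(y) := 1 - F(y^-)$. The strong law of large numbers applied to the bounded i.i.d.\ variables $h(Y_i)(1-h(Y_i))$ then gives $S_n(Y) \to \ee[h(Y)(1-h(Y))]$, and a Fubini computation identifies this with $d = \int \pp(Y \ge t)(1-\pp(Y \ge t))d\mu(t)$. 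The same reasoning shows $\frac{1}{n^3}\sum_i L_i^2 \to \ee[h(Y)^2] = \int \pp(Y \ge t)^2 d\mu(t)$, using $\pp(Y \ge t)^2 = \pp(\min(Y_1,Y_2)\ge t)$ for independent copies.

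The main obstacle is the nearest-neighbor term $\frac{1}{n^2}\sum_i \min(R_i, R_{M(i)})$. Rewriting $\min(R_i, R_{M(i)})/n = \hat F_n(\min(Y_i, Y_{M(i)}))$, Glivenko--Cantelli again replaces this uniformly by $F(\min(Y_i, Y_{M(i)}))$, so it suffices to compute the limit of $\frac{1}{n}\sum_i F(\min(Y_i, Y_{M(i)}))$. For this I would invoke the standard fact that the nearest-neighbor distance $\|\bbz_{M(i)} - \bbz_i\|$ concentrates at $0$ as $n\to\infty$; combined with exchangeability, this forces $(Y_i, Y_{M(i)})$ to behave asymptotically like a conditionally i.i.d.\ pair given a common $\bbz$, so the sample average converges a.s.\ to $\ee[F(\min(Y,Y'))]$ with $Y,Y'$ conditionally i.i.d.\ given $\bbz$. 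The subtlety is that $Y_i$ and $Y_{M(i)}$ are not truly conditionally independent for any finite $n$; one needs a careful argument decomposing based on whether $\bbz_{M(i)}$ is close to $\bbz_i$, using continuity of $\bz \mapsto \pp(Y\le \cdot\,|\bbz=\bz)$ on a set of high probability (with truncation in the complement) together with the fact that expected nearest-neighbor distances vanish. Combining with the previous paragraph and using $\int \ee[\pp(Y\ge t|\bbz)^2]d\mu(t) - \int \pp(Y\ge t)^2 d\mu(t) = \int \var(\pp(Y\ge t|\bbz))d\mu(t)$, we obtain $Q_n \to c$.

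Properties (i)--(iv) then fall out of the explicit formulas by routine manipulations. The conditional variance decomposition $\var(1_{\{Y\ge t\}}) = \ee[\var(1_{\{Y\ge t\}}|\bbz)] + \var(\pp(Y\ge t|\bbz))$, integrated against $\mu$, yields $c \le d$ and hence (i). For (ii), $c = 0$ iff $\pp(Y\ge t|\bbz) = \pp(Y\ge t)$ a.s.\ for $\mu$-a.e.\ $t$, iff (by taking a countable dense set of $t$) the conditional and unconditional laws of $Y$ given $\bbz$ coincide, i.e.\ $Y$ is independent of $\bbz$. For (iii), $c = d$ iff $\ee[\var(1_{\{Y\ge t\}}|\bbz)] = 0$ for $\mu$-a.e.\ $t$, iff $1_{\{Y\ge t\}}$ is $\bbz$-measurable for each such $t$; taking a countable dense set of $t$'s determines the conditional CDF to be a $\{0,1\}$-valued step function almost surely, so $Y$ is a measurable function of $\bbz$. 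Finally, (iv) is immediate, since $d > 0$ iff $\pp(Y \ge t)\in(0,1)$ on a set of positive $\mu$-measure, iff $Y$ is not a.s.\ constant.
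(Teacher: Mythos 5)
Your high-level outline — Glivenko--Cantelli plus SLLN for the rank pieces, nearest-neighbor consistency plus a Lusin-type truncation for the $M(i)$ term, and the law of total variance for (i)--(iv) — follows the same route as the paper, and parts of it (notably the treatment of $S_n$ and of the $L_i^2$ term, and the deduction of (i) and (iv)) are essentially correct and complete. However, there is a genuine gap in the central claim.

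You assert that $\frac1n\sum_i F(\min(Y_i,Y_{M(i)}))$ converges \emph{almost surely}, invoking nearest-neighbor consistency and ``exchangeability.'' What that argument actually delivers is that $\ee[F(\min(Y_1,Y_{M(1)}))]$ converges (equivalently, convergence of $\ee(Q_n)$), and with a bit more work convergence of $Q_n$ in probability. Exchangeability of the indices does not upgrade this to a strong law: the summands $F(\min(Y_i,Y_{M(i)}))$, $i=1,\dots,n$, form a triangular array of dependent variables (every summand depends on the whole configuration through $M(i)$), they are not i.i.d., not stationary, and not a martingale difference sequence, so no off-the-shelf SLLN applies. The paper closes exactly this gap by proving a concentration inequality $\pp(|Q_n-\ee Q_n|\ge t)\le C_1e^{-C_2nt^2}$ via McDiarmid's bounded-differences inequality and then invoking Borel--Cantelli. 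That concentration step is the technically delicate part of the whole proof: a naive bounded-differences bound fails because replacing one $(\bbz_i,Y_i)$ can change $M(j)$ for unboundedly many $j$ when there are ties; the paper handles this by first conditioning on the tie-breaking randomness to replace $Q_n$ by a smoother functional $Q_n'$, and then carrying out a careful case analysis over ``clusters'' of coincident $\bbz$-values, using the geometric bound (Lemma 11.4) that at most $C(q)$ points can have a given point as nearest neighbor. Your proposal omits all of this, so the almost-sure convergence of $Q_n$ is not established.

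A secondary, more minor issue is in your arguments for (ii) and (iii). From $c=0$ or $c=d$ you correctly get that a certain conditional variance vanishes for $\mu$-a.e.\ $t$, but the exceptional $\omega$-null set depends on $t$ (and the exceptional $t$-null set depends on $\omega$). ``Taking a countable dense set of $t$'s'' is the right instinct but needs care when $\mu$ has atoms or gaps in its support: approaching $t$ from the right along the dense set only controls $G_\bbz(t+)$, and one must separately argue that this pins down $G_\bbz(t)$ itself. The paper devotes Lemma~\ref{indepthm1} (right-continuity of $G$ off atoms, and a separate case analysis when $G$ is flat to the right of $t$) and, for (iii), the argument via $a_\bbz=\sup\{t:G_\bbz(t)=1\}$, $b_\bbz=\inf\{t:G_\bbz(t)=0\}$ and the support of $\mu$, precisely to make this rigorous. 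Your sketch of these steps would need to be filled in at the same level of detail to constitute a proof.
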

It is not difficult to see that whenever $Y$ has a continuous distribution, $d=1/6$. In this case, there is no need for estimating $d$ using $S_n(Y,\bbz)$. On the other hand, the value of $d$ may be dependent on the distribution of $Y$ when the distribution is not continuous. In such cases, $d$ needs to be estimated from the data using $S_n(Y,\bbz)$.

\section{Proofs of  Theorems \ref{popthm} and \ref{sampthm} using Theorems \ref{mainthm} and \ref{basethm}}\label{mainpfsec}
Suppose that $p\ge 1$. Recall the quantities $a$ and $b$ from the statement of Theorem \ref{mainthm}, and notice that $T=a/b$. Suppose that $Y$ is not a function of $\bbx$. Then by conclusion (iv) of Theorem \ref{mainthm}, $b>0$, and hence $T$ is well-defined. Moreover, conclusion (i) implies that $0\le T\le1$, conclusion (ii) implies that $T=0$ if and only if $Y$ and $\bbz$ are conditionally independent given $\bbx$, and conclusion (iii) implies that $Y$ is a function of $\bbz$ given $\bbx$ if and only if $T=1$. This proves Theorem \ref{popthm} when $p\ge 1$. Next, note that $T_n = Q_n/S_n$, where $Q_n = Q_n(Y,\bbz|\bbx)$ and $S_n = S_n(Y,\bbx)$, as defined in \eqref{qndef1} and \eqref{sndef1}. By Theorem~\ref{mainthm}, $Q_n \to a$ and $S_n \to b$ in probability. Thus, $T_n \to a/b =T$ in probability. This proves Theorem \ref{sampthm} when $p\ge 1$.

Next, suppose that $p=0$. The proof proceeds exactly as before, but using Theorem \ref{basethm}. Here $T=c/d$, where $c$ and $d$ are the quantities from Theorem~\ref{basethm}. Suppose that $Y$ is not a function of $\bbx$, which in this case just means that $Y$ is not a constant. Then by conclusion (iv) of Theorem~\ref{basethm}, $d>0$, and hence $T$ is well-defined. Moreover, conclusion (i) implies that $0\le T\le1$, conclusion (ii) implies that $T=0$ if and only if $Y$ and $\bbz$ are independent, and conclusion (iii) implies that $Y$ is a function of $\bbz$ if and only if $T=1$. This proves Theorem \ref{popthm} when $p=0$. Next, note that $T_n = Q_n/S_n$, where $Q_n = Q_n(Y,\bbz)$ and $S_n = S_n(Y)$, as defined in \eqref{qndef2} and \eqref{sndef2}. By Theorem~\ref{basethm}, $Q_n \to c$ and $S_n \to d$ in probability. Thus, $T_n \to c/d =T$ in probability. This proves Theorem \ref{sampthm} when $p=0$.

\section{Preparation for the proofs of Theorems \ref{mainthm} and \ref{basethm}}
In this section we prove some lemmas that are needed for the proofs of Theorems \ref{mainthm} and \ref{basethm}. Let $Y$ be a random variable and $\bbx$ be an $\rr^p$-valued random vector, defined on the same probability space. 
Define 
\[
F(t) :=  \pp(Y\le t), \ \ \ G(t):= \pp(Y\ge t).
\]
By the existence of regular conditional probabilities on regular Borel spaces (see for example \cite[Theorem 2.1.15 and Exercise 5.1.16]{durrett10}), for each Borel set $A\subseteq \rr$ there is a measurable map $\bx \mapsto \mu_\bx(A)$ from $\rr^p$ into $[0,1]$, such that 
\begin{enumerate}[(i)]
\item for any $A$, $\mu_\bbx(A)$ is a version of $\pp(Y\in A|\bbx)$, and 
\item with probability one, $\mu_\bbx$ is a probability measure on $\rr$. 
\end{enumerate}
In the above sense, $\mu_\bx$ is the conditional law of $Y$ given $\bbx=\bx$. For each $t$, let 
\[
F_\bbx(t) := \mu_\bbx((-\infty,t]), \ \ G_\bbx(t) := \mu_\bbx([t, \infty)). 
\]
Define
\begin{align}\label{qdef}
Q(Y,\bbx) := \int \var(G_\bbx(t)) d\mu(t). 
\end{align}
\begin{lmm}\label{indepthm1}
Let $Q(Y,\bbx)$ be as above. Then $Q(Y,\bbx) =0$ if and only if $Y$ and $\bbx$ are independent. 
\end{lmm}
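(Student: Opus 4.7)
\emph{The easy direction} is immediate: if $Y$ is independent of $\bbx$, then $G_\bbx(t) = \pp(Y \ge t \mid \bbx) = \pp(Y \ge t) = G(t)$ almost surely for every $t$, so $\var(G_\bbx(t)) = 0$ identically and hence $Q(Y,\bbx) = 0$.

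\emph{For the converse}, suppose $Q(Y, \bbx) = 0$. Since the integrand in \eqref{qdef} is nonnegative, I would first extract a Borel set $T \subseteq \rr$ with $\mu(T) = 1$ such that $G_\bbx(t) = G(t)$ almost surely for each $t \in T$ (the exceptional null event depending on $t$). Every atom of $\mu$ automatically lies in $T$, since $\mu(T^c)=0$. The central task is to upgrade these separate pointwise statements into the joint statement that $\mu_\bbx = \mu$ almost surely as random probability measures, which is exactly the independence of $Y$ and $\bbx$.

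The plan is to select a countable set $D \subseteq T$ that is (i) dense in $S := \mathrm{supp}(\mu)$ and (ii) contains every atom of $\mu$. Such a $D$ exists because $S$ is separable and any open set meeting $S$ has positive $\mu$-measure, hence intersects $T$. On the almost-sure event $\Omega_1$ where $G_\bbx(d) = G(d)$ holds simultaneously for every $d \in D$ (a countable intersection of almost-sure events), I would then extend the equality to all of $\rr$. For any $s \in S$ that is a left-accumulation point of $D$, left-continuity of both $G_\bbx$ and $G$ yields $G_\bbx(s) = \lim_n G_\bbx(d_n) = \lim_n G(d_n) = G(s)$ along any sequence $d_n \in D$ with $d_n \uparrow s$.

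The remaining points fall into two countable categories. The ``left-isolated'' points of $S$ (those $s$ with $(s - \epsilon, s) \cap S = \emptyset$ for some $\epsilon > 0$) form a countable set because their left-neighborhoods are pairwise disjoint; each such $s$ is either an atom of $\mu$ (already in $D$) or a non-atom, in which case $\mu_\bbx(\{s\}) = 0$ almost surely (its expectation being $\mu(\{s\}) = 0$) and approaching $s$ from the right along $D$ gives $G_\bbx(s) = G_\bbx(s^+) = G(s^+) = G(s)$. Points outside $S$ lie in the countable union $S^c = \bigsqcup_n (a_n, b_n)$ of open intervals of $\mu$-measure zero; on each interval $\mu_\bbx$ also gives zero mass almost surely, so both $G$ and $G_\bbx$ are constant there with the value at the right endpoint (which lies in $S$ or is $+\infty$). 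Intersecting the countably many almost-sure events produced along the way yields a single event of probability one on which $G_\bbx \equiv G$, so $\mu_\bbx = \mu$ almost surely, giving independence. The main obstacle is the careful bookkeeping of null sets to ensure that only countably many need to be excluded; the combinatorial fact that rescues this is the countability of the left-isolated points of $S$.
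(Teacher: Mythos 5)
Your proof is correct, but it travels a genuinely different road than the paper's. You aim to upgrade the pointwise statement ``$G_\bbx(t)=G(t)$ a.s.\ for $\mu$-a.e.\ $t$'' to the joint statement ``$\mu_\bbx=\mu$ a.s.\ as random probability measures'' by picking a countable $\mu$-dense set $D\subseteq T$, intersecting the countably many null events, and then extending to all of $\rr$ via left-continuity, the countability of left-isolated points of $\mathrm{supp}(\mu)$, and the a.s.\ nullity of $\mu_\bbx$ on the complementary intervals. The paper never assembles a single almost-sure event: it shows instead that for \emph{every fixed} $t\in\rr$ one has $G_\bbx(t)=G(t)$ a.s.\ (with a $t$-dependent null set), and then obtains independence directly by factorizing $\pp(\{Y\ge t\}\cap\{\bbx\in B\})=G(t)\pp(\bbx\in B)$. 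Its extension from $A$ (your $T$) to all of $\rr$ is also different in flavor: it splits on whether $G$ is strictly decreasing just to the right of $t$ or constant on some $[t,s]$, and in both cases derives the one-sided inequality $G_\bbx(t)\ge G(t)$ a.s.\ from monotonicity and approximation along $A$, then closes the gap using $\ee(G_\bbx(t))=G(t)$. This ``monotone majorant with matching expectation'' trick replaces your topological bookkeeping (density in the support, left-isolated points, gap intervals) with a single probabilistic observation, and it avoids having to verify that only countably many exceptional null sets need to be excluded. Your approach has the virtue of making the ``$\mu_\bbx=\mu$ a.s.'' picture explicit and of setting up machinery that generalizes cleanly (it is essentially the same separability argument one would use to identify two random distribution functions), whereas the paper's is shorter because the weaker pointwise conclusion already suffices for independence. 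Both arguments are sound; you just proved slightly more than was needed.
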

\begin{proof}
If $Y$ and $\bbx$ are independent, then for any $t$, $\pp(Y\ge t|\bbx)=\pp(Y\ge t)$ almost surely. Thus, $G_\bbx(t) = G(t)$ almost surely, and so $\var(G_\bbx(t)) =0$. Consequently, $Q(Y,\bbx)=0$. 

Conversely, suppose that $Q(Y,\bbx)=0$. Then there is a set $A\subseteq \rr$ such that $\mu(A)=1$ and $\var(G_\bbx(t))=0$ for every $t\in A$. Since $\ee(G_\bbx(t))=G(t)$, $G_\bbx(t)=G(t)$ almost surely for each $t\in A$.  We claim that $A=\rr$. 

To show this, take any $t\in \rr$. If $\mu(\{t\})>0$, then clearly $t$ must be a member of $A$ and there is nothing more to prove. So assume that $\mu(\{t\})=0$. This implies that $G$ is right-continuous at $t$. 

There are two possibilities. First, suppose that $G(s)<G(t)$ for all $s>t$. Then for each $s>t$, $\mu([t,s)) >0$, and hence $A$ must intersect $[t,s)$. This shows that there is a sequence $r_n$ in $A$ such that $r_n$ decreases to $t$. Since $G_\bbx(r_n)=G(r_n)$ almost surely for each $n$, this implies that with probability one,
\[
G_\bbx(t) \ge \lim_{n\to \infty} G_\bbx(r_n)= \lim_{n\to\infty}G(r_n)=G(t).
\]
But $\ee(G_\bbx(t))=G(t)$. Thus, $G_\bbx(t) = G(t)$ almost surely. 

The second possibility is that there is some $s>t$ such that $G(s)=G(t)$. Take the largest such $s$, which exists because $G$ is left-continuous. If $s=\infty$, then $G(t)=G(s)=0$, and hence $G_\bbx(t)=0$ almost surely because $\ee(G_\bbx(t))=G(t)$. Suppose that $s<\infty$. Then either $\mu(\{s\})>0$, which implies that $G_\bbx(s)=G(s)$ almost surely, or $\mu(\{s\})=0$ and $G(r)<G(s)$ for all $r>s$, which again implies that $G_\bbx(s)=G(s)$ almost surely, by the previous paragraph. Therefore in either case, with probability one,
\[
G_\bbx(t)\ge G_\bbx(s)=G(s)=G(t).
\]
Since $\ee(G_\bbx(t))=G(t)$, this implies that $G_\bbx(t)=G(t)$ almost surely.

This completes the proof of our claim that $\var(G_\bbx(t))=0$ for every $t\in \rr$. In particular, for each $t\in \rr$, $G_\bbx(t)=G(t)$ almost surely. Therefore, for any $t\in \rr$ and any Borel set $B\subseteq \rr^p$,
\begin{align*}
\pp(\{Y\ge t\}\cap \{\bbx\in B\}) &= \ee(\pp(Y\ge t|\bbx) 1_{\{\bbx\in B\}})\\
&= G(t) \pp(\bbx\in B) = \pp(Y\ge t) \pp(\bbx\in B).
\end{align*} 
This proves that $Y$ and $\bbx$ are independent. 
\end{proof}
Let $\bbz$ be an $\rr^q$-valued random vector defined on the same probability space as $Y$ and $\bbx$, and let $\bbw = (\bbx,\bbz)$ be the concatenation of $\bbx$ and $\bbz$. 
\begin{lmm}\label{monotonethm}
Let $\bbw$ be as above. Then $Q(Y,\bbw)\ge Q(Y,\bbx)$, and equality holds if and only if $Y$ and $\bbz$ are conditionally independent given $\bbx$. 
\end{lmm}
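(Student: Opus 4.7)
The plan is to apply the law of total variance to $\var(G_\bbw(t))$ conditionally on $\bbx$. Since $\ee(G_\bbw(t)\mid \bbx) = \ee(\pp(Y\ge t\mid \bbx,\bbz)\mid \bbx) = \pp(Y\ge t\mid \bbx) = G_\bbx(t)$ almost surely by the tower property, we get
\[
\var(G_\bbw(t)) = \var(G_\bbx(t)) + \ee(\var(G_\bbw(t)\mid \bbx)).
\]
Integrating against $d\mu(t)$ yields $Q(Y,\bbw)-Q(Y,\bbx) = \int \ee(\var(G_\bbw(t)\mid \bbx))\,d\mu(t) \ge 0$, which is the inequality half of the lemma. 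The easy direction of the equality characterization follows immediately: if $Y$ and $\bbz$ are conditionally independent given $\bbx$, then taking $A=[t,\infty)$ in the definition of conditional independence gives $G_\bbw(t) = G_\bbx(t)$ a.s.\ for every $t$, so the integrand vanishes identically.

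The substantive work is to derive conditional independence from $Q(Y,\bbw) = Q(Y,\bbx)$. Equality forces $\ee(\var(G_\bbw(t)\mid\bbx))=0$ for $\mu$-almost every $t$, so for each such $t$ we have $\var(G_\bbw(t)\mid \bbx)=0$, i.e.\ $G_\bbw(t)=G_\bbx(t)$ almost surely. Let $A\subseteq \rr$ be a set of full $\mu$-measure on which this identity holds. The first technical step is to promote ``$\mu$-a.e.\ $t$'' to ``every $t\in \rr$'' by a right-continuity argument in direct parallel to the proof of Lemma \ref{indepthm1}: given $t\notin A$, either we pick $r_n\in A$ with $r_n\downarrow t$ and pass to the limit in $G_\bbw(r_n) = G_\bbx(r_n)$ using that $\mu(\{t\})=0$ implies $\mu_\bbw(\{t\})=\mu_\bbx(\{t\})=0$ a.s., or $\mu$ places no mass immediately to the right of $t$ and we handle the plateau case exactly as in Lemma \ref{indepthm1}. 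The outcome is that $G_\bbw(t) = G_\bbx(t)$ almost surely for every $t\in \rr$.

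The second step is to upgrade pointwise equality of the conditional survival functions to equality of the regular conditional distributions $\mu_\bbw$ and $\mu_\bbx$. Fixing any countable dense $D\subset \rr$, off a single null set we have $\mu_\bbw([t,\infty)) = \mu_\bbx([t,\infty))$ simultaneously for all $t\in D$, and since a probability measure on $\rr$ is pinned down by the values of its survival function on any dense set, $\mu_\bbw = \mu_\bbx$ almost surely. This is precisely the statement that the conditional law of $Y$ given $(\bbx,\bbz)$ depends only on $\bbx$, hence $Y$ and $\bbz$ are conditionally independent given $\bbx$.

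I expect the only delicate bookkeeping to be the right-continuity extension in the second paragraph; it runs in direct parallel to the corresponding portion of the proof of Lemma \ref{indepthm1}, so I would reference that argument rather than reproduce it in full. Everything else is either the law of total variance or a routine $\pi$--$\lambda$/dense-set argument.
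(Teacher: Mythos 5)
Your proof is correct and follows essentially the same route as the paper's: the law-of-total-variance decomposition, deduction of $G_\bbw(t)=G_\bbx(t)$ a.s.\ for $\mu$-a.e.\ $t$, promotion to all $t$ by the right-continuity/plateau argument parallel to Lemma~\ref{indepthm1}, and then concluding conditional independence. The only cosmetic differences are that you establish the a.s.\ right-continuity of $G_\bbx$ via $\mu_\bbx(\{t\})=0$ a.s.\ rather than the paper's subsequence argument, and you phrase the final step as equality of regular conditional laws on a countable dense set rather than the paper's direct computation of $\pp(\{Y\ge t\}\cap\{\bbz\in B\}\mid\bbx)$; both package the same content.
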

\begin{proof}
Since $G_\bbx(t) = \ee(G_\bbw(t)|\bbx)$, it follows that for each $t$,
\[
\var(G_\bbx(t)) \le \var(G_\bbw(t)).
\]
Consequently, $Q(Y, \bbw)\ge Q(Y,\bbx)$. If $Y$ and $\bbz$ are conditionally independent given $\bbx$, then for any $t$,
\[
G_\bbw(t) = \pp(Y\ge t|\bbx, \bbz) = \pp(Y\ge t|\bbx) = G_\bbx(t). 
\]
Thus, $Q(Y,\bbw)=Q(Y,\bbx)$. 
Conversely, suppose that $Q(Y,\bbw)=Q(Y,\bbx)$. Notice that
\begin{align*}
\var(G_\bbw(t))-\var(G_\bbx(t)) &= \var(G_\bbw(t)) - \var(\ee(G_\bbw(t)|\bbx))\\
&= \ee(\var(G_\bbw(t)|\bbx))\\
&= \ee(G_\bbw(t)-G_\bbx(t))^2. 
\end{align*}
Thus,
\[
Q(Y,\bbw)-Q(Y,\bbx)=\int \ee(G_\bbw(t)-G_\bbx(t))^2d\mu(t). 
\]
So, if $Q(Y,\bbw)=Q(Y,\bbx)$, then there is a Borel set $A\subseteq \rr$ such that $\mu(A)=1$ and $G_\bbw(t)= G_\bbx(t)$ almost surely for every $t\in A$. We claim that $A=\rr$. Let us now prove this claim. The proof is similar to the proof of the analogous claim in Lemma \ref{indepthm1}, with a few additional complications.

Take any $t\in \rr$. If $\mu(\{t\})>0$, then clearly $t$ must be a member of $A$. So assume that $\mu(\{t\})=0$. As before, this implies that $G$ is right-continuous at $t$. Take any sequence $t_n$ decreasing to $t$. Then $G(t)-G(t_n)\to 0$. But 
\[
G(t)-G(t_n) = \ee(G_\bbx(t)-G_\bbx(t_n)),
\]
and $G_\bbx(t)-G_\bbx(t_n)$ is a nonnegative random variable. Thus, $G_\bbx(t)-G_\bbx(t_n)\to 0$ in probability, and therefore there is a subsequence $n_k$ such that $G_\bbx(t_{n_k})$ converges to $G_\bbx(t)$ almost surely. But from the properties of the regular conditional probability $\mu_\bx$ we know that $G_\bbx$ is a non-increasing function almost surely. Thus, it follows that $G_\bbx$ is right-continuous at $t$ almost surely.

Now, as before, there are two possibilities. First, suppose that $G(s)<G(t)$ for all $s>t$. Then for each $s>t$, $\mu([t,s)) >0$, and hence $A$ must intersect $[t,s)$. This shows that there is a sequence $r_n$ in $A$ such that $r_n$ decreases to $t$. Since $G_\bbw(r_n)=G_\bbx(r_n)$ almost surely for each $n$ and $G_\bbx$ is right-continuous at $t$ with probability one, this implies that with probability one,
\[
G_\bbw(t) \ge \lim_{n\to \infty} G_\bbw(r_n)= \lim_{n\to\infty}G_\bbx(r_n)=G_\bbx(t).
\]
But $\ee(G_\bbw(t)|\bbx)=G_\bbx(t)$. Thus, $G_\bbw(t) = G_\bbx(t)$ almost surely. 

The second possibility is that there is some $s>t$ such that $G(s)=G(t)$. Take the largest such $s$, which exists because $G$ is left-continuous. If $s=\infty$, then $G(t)=G(s)=0$, and hence $G_\bbw(t) = G_\bbx(t)=0$ almost surely because $\ee(G_\bbw(t)) = \ee(G_\bbx(t))=G(t)$. Suppose that $s<\infty$. Then either $\mu(\{s\})>0$, which implies that $G_\bbw(s)=G_\bbx(s)$ almost surely (by the previous step), or $\mu(\{s\})=0$ and $G(r)<G(s)$ for all $r>s$, which again implies that $G_\bbw(s)=G_\bbx(s)$ almost surely (also by the previous step). Therefore in either case, with probability one,
\[
G_\bbw(t)\ge G_\bbw(s)= G_\bbx(s).
\]
Now, $\pp(Y\in [t,s))=0$, and hence $\pp(Y\in [t,s)|\bbx)=0$ almost surely. In other words, $G_\bbx(t)=G_\bbx(s)$ almost surely. Thus, $G_\bbw(t)\ge G_\bbx(t)$ almost surely. 
Since $\ee(G_\bbw(t)|\bbx)=G_\bbx(t)$, this implies that $G_\bbw(t)=G_\bbx(t)$ almost surely. This completes the proof of our claim that $A=\rr$.

Therefore, for any $t\in \rr$ and any Borel set $B\subseteq \rr^{p'}$,
\begin{align*}
\pp(\{Y\ge t\}\cap \{\bbz\in B\}|\bbx) &= \ee(\pp(\{Y\ge t\}\cap \{\bbz\in B\}|\bbw)|\bbx)\\
&= \ee(\pp(Y\ge t|\bbw) 1_{\{\bbz\in B\}}|\bbx)\\
&= \ee(G_\bbx(t) 1_{\{\bbz\in B\}}|\bbx) \\
&= \pp(Y\ge t |\bbx) \pp(\bbz\in B|\bbx).
\end{align*} 
This proves that $Y$ and $\bbz$ are conditionally independent given $\bbx$. 
\end{proof}

Let $\bbx_1,\bbx_2,\ldots$ be an infinite sequence of i.i.d.~copies of $\bbx$. For each $n\ge 2$ and each $1\le i\le n$, let $\bbx_{n,i}$ be the Euclidean nearest-neighbor of $\bbx_i$ among $\{\bbx_j: 1\le j\le n, j\ne i\}$. Ties are broken at random. 
\begin{lmm}\label{nnlmm}
With probability one, $\bbx_{n,1}\to \bbx_1$ as $n\to \infty$.
\end{lmm}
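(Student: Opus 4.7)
The plan is to show that $D_n := \min_{2 \le j \le n} \|\bbx_j - \bbx_1\|$ tends to zero almost surely, which is equivalent to the lemma since $D_n = \|\bbx_{n,1} - \bbx_1\|$ by the definition of the nearest neighbor. The sequence $\{D_n\}$ is non-increasing in $n$ and non-negative, so it converges almost surely to some $D_\infty \ge 0$. It therefore suffices to show that $D_\infty = 0$ almost surely.

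The key observation is that $\bbx_1$ lies almost surely in the topological support $S$ of the law $\mu$ of $\bbx$, since $\mu(S) = 1$ by definition of the support. For every $x \in S$ and every $\epsilon > 0$, one has $p_\epsilon(x) := \mu(B(x, \epsilon)) > 0$, where $B(x, \epsilon)$ denotes the open Euclidean ball of radius $\epsilon$ around $x$. The function $x \mapsto p_\epsilon(x)$ is Borel-measurable by a routine application of Fubini's theorem to the indicator $\mathbf{1}_{\{\|y - x\| < \epsilon\}}$, so $p_\epsilon(\bbx_1)$ is a well-defined random variable, strictly positive on the a.s.\ event $\{\bbx_1 \in S\}$.

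Next, I would condition on $\bbx_1$. Given $\bbx_1$, the events $\{\bbx_j \in B(\bbx_1, \epsilon)\}$ for $j \ge 2$ are i.i.d.\ Bernoulli trials with common success probability $p_\epsilon(\bbx_1)$, since $\bbx_2, \bbx_3, \ldots$ are i.i.d.\ copies of $\bbx$ and independent of $\bbx_1$. Hence
\[
\pp(D_n \ge \epsilon \mid \bbx_1) \;=\; \bigl(1 - p_\epsilon(\bbx_1)\bigr)^{n-1} \;\longrightarrow\; 0
\]
as $n \to \infty$ on the full-measure event $\{\bbx_1 \in S\}$. Since $\{D_n \ge \epsilon\}$ is decreasing in $n$ with limit $\{D_\infty \ge \epsilon\}$, monotone convergence gives $\pp(D_\infty \ge \epsilon \mid \bbx_1) = 0$ almost surely, and taking expectations yields $\pp(D_\infty \ge \epsilon) = 0$. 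Applying this to $\epsilon = 1/k$ for each positive integer $k$ and taking a countable union of null sets then delivers $D_\infty = 0$ almost surely.

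There is no serious obstacle here. The only point requiring a bit of care is the measurability issue in defining $p_\epsilon(\bbx_1)$ and the proper use of conditional probabilities given $\bbx_1$; once one recognizes that $\mu(B(\bbx_1,\epsilon))$ is almost surely positive (precisely because $\bbx_1 \in S$ a.s.), the remainder is a straightforward geometric-series computation.
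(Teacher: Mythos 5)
Your proof is correct and follows essentially the same path as the paper: both arguments rest on the observation that $\bbx_1$ lies almost surely in the support of its law, so that every ball around $\bbx_1$ has positive mass, and then condition on $\bbx_1$ to get the bound $\pp(\|\bbx_{n,1}-\bbx_1\|\ge \ve\mid\bbx_1)=(1-\mu(B(\bbx_1,\ve)))^{n-1}\to 0$, upgrading to almost sure convergence via the monotonicity of the nearest-neighbor distance in $n$. The only difference is cosmetic: the paper deduces convergence in probability and then invokes monotonicity, whereas you first pass to the almost sure limit $D_\infty$ and show it vanishes.
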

\begin{proof}
 Let $\nu$ be the law of $\bbx$. Let $A$ be the support of $\nu$. Recall that $A$ is the set of all $\bx\in \rr^p$ such that any open ball containing $\bx$ has strictly positive $\nu$-measure. From this definition it follows easily that the complement of $A$ is a countable union of open balls of $\nu$-measure zero. Consequently, $\bbx\in A$ with probability one.
 
 Take any $\ve>0$. Let $B$ be the ball of radius $\ve$ centered at $\bbx_1$. Then 
\begin{align*}
\pp(\|\bbx_1-\bbx_{n,1}\|\ge \ve|\bbx_1) &\le (1-\nu(B))^{n-1}
\end{align*}
Since $\bbx_1\in A$ almost surely, it follows that $\nu(B)>0$ almost surely. Thus,
\begin{align*}
\lim_{n\to\infty} \pp(\|\bbx_1-\bbx_{n,1}\|\ge \ve|\bbx_1)=0
\end{align*}
almost surely, and hence
\[
\lim_{n\to\infty} \pp(\|\bbx_1-\bbx_{n,1}\|\ge \ve)=0.
\] 
This proves that $\|\bbx_1-\bbx_{n,1}\|\to 0$ in probability. But  $\|\bbx_1-\bbx_{n,1}\|$ is decreasing in $n$. Therefore $\|\bbx_1-\bbx_{n,1}\|\to 0$ almost surely.
\end{proof}
Take any particular realization of $\bbx_1,\ldots, \bbx_n$. In this realization, for each $1\le i\le n$, let $K_{n,i}$ be the number of $j$ such that $\bbx_i$ is a nearest neighbor of $\bbx_j$ (not necessarily the randomly chosen one) and $\bbx_j \ne \bbx_i$. The following is a well-known geometric fact (see for example \cite[page 102]{yukich98}). 
\begin{lmm}\label{geomlmm}
There is a deterministic constant $C(p)$, depending only on the dimension $p$, such that $K_{n,1}\le C(p)$ always. 
\end{lmm}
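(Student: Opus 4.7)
The plan is to prove this classical stochastic-geometry fact by a direct angle-packing argument. The core idea is that if $\bbx_1$ happens to be a nearest neighbor of many different points $\bbx_j$, then the direction vectors from $\bbx_1$ to these $\bbx_j$ must be well-separated on the unit sphere $S^{p-1}$, and only finitely many directions can be mutually separated on a sphere in $\rr^p$.

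\medskip
\noindent\textbf{Step 1 (Key geometric claim).} I would first show that for any two distinct indices $j,k$ (both different from $1$) such that $\bbx_1$ is a nearest neighbor of $\bbx_j$ and also a nearest neighbor of $\bbx_k$, and such that $\bbx_j\ne \bbx_1$ and $\bbx_k\ne \bbx_1$, the angle $\theta$ formed at $\bbx_1$ by the vectors $\bbx_j-\bbx_1$ and $\bbx_k-\bbx_1$ satisfies $\theta\ge \pi/3$. The reason is that, by the nearest-neighbor property, $\|\bbx_j-\bbx_1\|\le \|\bbx_j-\bbx_k\|$ and $\|\bbx_k-\bbx_1\|\le \|\bbx_j-\bbx_k\|$, so in the triangle with vertices $\bbx_1,\bbx_j,\bbx_k$ the side opposite $\bbx_1$ is the longest side, forcing the angle at $\bbx_1$ to be the largest of the three angles, hence at least $\pi/3$. (Equivalently, by the law of cosines, $\cos\theta\le 1/2$.)

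\medskip
\noindent\textbf{Step 2 (Packing on the sphere).} Let $J$ be the set of indices $j$ counted by $K_{n,1}$, so each $\bbx_j$ with $j\in J$ is distinct from $\bbx_1$ and has $\bbx_1$ among its nearest neighbors. Define unit vectors $u_j := (\bbx_j-\bbx_1)/\|\bbx_j-\bbx_1\|\in S^{p-1}$ for $j\in J$. By Step 1, $\langle u_j,u_k\rangle\le 1/2$ for all distinct $j,k\in J$. A standard volume/packing argument then bounds $|J|$ by a constant depending only on $p$: the spherical caps of angular radius $\pi/6$ around the $u_j$'s have disjoint interiors on $S^{p-1}$, and each has a fixed positive surface measure, so their number is at most the total surface area of $S^{p-1}$ divided by this fixed area. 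Call this bound $C(p)$.

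\medskip
\noindent\textbf{Step 3 (Conclusion and ties).} Combining Steps 1 and 2 yields $K_{n,1}\le C(p)$ deterministically, for every realization of $\bbx_1,\ldots,\bbx_n$. I would remark that the definition of $K_{n,1}$ counts \emph{all} $j$ for which $\bbx_1$ realizes the nearest-neighbor distance from $\bbx_j$, irrespective of how ties were broken randomly; this matters only cosmetically because Step 1 required only a weak inequality $\theta\ge \pi/3$, which holds in the tie case as well.

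\medskip
The argument has no real obstacle; the only mild subtlety is the equality case $\theta=\pi/3$ (i.e.\ equilateral configurations, which can occur when several interpoint distances coincide), but because the packing argument in Step 2 tolerates $\langle u_j,u_k\rangle\le 1/2$ with non-strict inequality, this causes no trouble. One could cite \cite{yukich98} for the final constant, but the sketch above is entirely self-contained.
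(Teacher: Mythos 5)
Your argument is correct and rests on the same geometric fact as the paper's proof, namely that two points that both have $\bbx_1$ as a nearest neighbor must subtend an angle of at least $60^\circ$ at $\bbx_1$. The paper packages this dually: it shows that within any cone of aperture strictly less than $60^\circ$ based at $\bbx_1$, at most one point can have $\bbx_1$ as nearest neighbor, and then covers $\rr^p$ by $C(p)$ such cones, whereas you show the pairwise $60^\circ$ separation directly (longest side is opposite the largest angle, which must therefore be $\ge\pi/3$) and then pack disjoint spherical caps of angular radius $\pi/6$ on $S^{p-1}$. The covering and packing formulations give the same bound, and your handling of the non-strict $\cos\theta\le 1/2$ (equilateral and collinear boundary cases) is sound, so this is essentially the paper's proof in the contrapositive direction.
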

\begin{proof}
Consider a triangle with vertices $\bx$, $\by$ and $\bz$ in $\rr^p$, where $\by\ne \bx$ and $\bz\ne \bx$. Suppose that the angle at $\bx$ is strictly less than $60^\circ$ and $\|\bx-\by\|\le \|\bx-\bz\|$. Then 
\begin{align*}
\frac{(\by-\bx)\cdot(\bz-\bx)}{\|\by-\bx\|\|\bz-\bx\|}> \cos 60^\circ = \frac{1}{2}. 
\end{align*}
Consequently,
\begin{align*}
\|\bz-\by\|^2 &= \|\bz-\bx\|^2 + \|\bx-\by\|^2 + 2(\bz-\bx)\cdot(\bx-\by)\\
&< \|\bz-\bx\|^2 + \|\bx-\by\|^2 -\|\by-\bx\|\|\bz-\bx\|\\
&\le \|\bz-\bx\|^2,
\end{align*}
where the last inequality holds because $\|\bx-\by\|\le \|\bx-\bz\|$. Thus, if $K$ is a cone at $\bx$ of aperture less than $60^\circ$, and $\bx_1,\ldots,\bx_m$ is a finite list of points in $K\setminus\{\bx\}$ (not necessarily distinct), then there can be at most one $i$ such that the nearest neighbor of $\bx_i$ in $\{\bx, \bx_1,\ldots,\bx_m\}$ is $\bx$.

Now, it is not difficult to see that there is a deterministic constant $C(p)$ depending only on $p$ such that the whole of $\rr^p$ can be covered by at most $C(p)$ cones of apertures less than $60^\circ$ based at any given point.  Take this point to be $\bbx_1$. Then within each cone, there can be at most one $\bbx_j$, which is not equal to $\bbx_1$, and  whose nearest neighbor is $\bbx_1$. This shows that there can be at most $C(p)$ points distinct from $\bbx_1$ whose nearest neighbor is $\bbx_1$, completing the proof of the lemma.  
\end{proof}
\begin{lmm}\label{cplmm}
There is a constant $C(p)$ depending only on $p$, such that for any measurable $f:\rr^p \to [0,\infty)$ and any $n$, $\ee(f(\bbx_{n,1}))\le C(p)\ee(f(\bbx_1))$.
\end{lmm}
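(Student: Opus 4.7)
The plan is to use a standard symmetry/exchangeability argument combined with the geometric bound from Lemma \ref{geomlmm}. For each $n\ge 2$ and each $j$ with $2\le j\le n$, let $N(j)$ denote the index $i$ such that $\bbx_i$ is the (randomly chosen) nearest neighbor of $\bbx_j$ among $\{\bbx_k: 1\le k\le n,\ k\ne j\}$. Then
\[
\ee(f(\bbx_{n,1})) = \sum_{j=2}^n \ee\bigl(f(\bbx_j)\, \mathbb{1}_{\{N(1)=j\}}\bigr).
\]
Since $(\bbx_1,\ldots,\bbx_n)$ are i.i.d., and the uniform tie-breaking rule is permutation-invariant, swapping $\bbx_1$ and $\bbx_j$ in the joint law gives
\[
\ee\bigl(f(\bbx_j)\, \mathbb{1}_{\{N(1)=j\}}\bigr) = \ee\bigl(f(\bbx_1)\, \mathbb{1}_{\{N(j)=1\}}\bigr).
\]
Summing over $j$, I obtain the key identity $\ee(f(\bbx_{n,1})) = \ee(f(\bbx_1) N)$, where $N := \#\{j\ne 1 : N(j)=1\}$ is the number of points whose randomly chosen nearest neighbor is $\bbx_1$.

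Next I split $N = N_1 + N_2$, where $N_1$ counts the indices $j\ne 1$ with $\bbx_j\ne \bbx_1$, and $N_2$ counts those with $\bbx_j = \bbx_1$. For the first piece, $N(j)=1$ forces $\bbx_1$ to be a nearest neighbor of $\bbx_j$ (not merely the randomly chosen one), so $N_1 \le K_{n,1}$, which by Lemma \ref{geomlmm} is deterministically bounded by $C(p)$. For the second piece, if $D$ denotes the number of indices $j\ne 1$ with $\bbx_j = \bbx_1$, then conditional on $(\bbx_1,\ldots,\bbx_n)$ each such $j$ has its nearest neighbor drawn uniformly from the $D$ duplicates, so $\pp(N(j)=1\mid \bbx_1,\ldots,\bbx_n) = 1/D$, and independence of the tie-breaking across $j$ gives
\[
\ee(N_2 \mid \bbx_1,\ldots,\bbx_n) = D\cdot \tfrac{1}{D} = 1
\]
(interpreted as $0$ when $D=0$). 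Combining, $\ee(N\mid \bbx_1,\ldots,\bbx_n)\le C(p)+1$.

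Finally, since $f(\bbx_1)$ is measurable with respect to the $\sigma$-algebra generated by $(\bbx_1,\ldots,\bbx_n)$, conditioning on this data yields
\[
\ee(f(\bbx_{n,1})) = \ee\bigl(f(\bbx_1)\, \ee(N\mid \bbx_1,\ldots,\bbx_n)\bigr) \le (C(p)+1)\, \ee(f(\bbx_1)),
\]
which is the desired bound with a new constant $C(p)$ (relabelled). The only subtle step is the handling of duplicates in $N_2$; absent ties, the whole argument reduces to the clean inequality $N\le C(p)$ almost surely, but since the paper imposes no distributional assumptions on $\bbx$, the duplicate correction via uniform random tie-breaking must be done carefully as above.
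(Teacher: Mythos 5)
Your proof is correct. It uses the same two ingredients as the paper's own argument --- the exchangeability of $(\bbx_1,\ldots,\bbx_n)$ and the deterministic geometric bound of Lemma~\ref{geomlmm} --- but organizes them differently. The paper first isolates the event $\{\bbx_{n,i}=\bbx_i\}$, on which $f(\bbx_{n,i})=f(\bbx_i)$ trivially, and then exploits symmetry by averaging $\ee(f(\bbx_{n,i}))$ over $i$ and interchanging two sums, ending at $\ee(f(\bbx_1)(1+K_{n,1}))$; this route never needs to know how ties are broken. You instead apply the transposition $(1\,j)$ summand by summand to obtain the exact identity $\ee(f(\bbx_{n,1}))=\ee\bigl(f(\bbx_1)N\bigr)$ with $N=\#\{j\ne1:N(j)=1\}$, then condition on $\bbx_1,\ldots,\bbx_n$ and split $N$ into a non-duplicate count $N_1\le K_{n,1}$ and a duplicate count $N_2$ with $\ee(N_2\mid\bbx_1,\ldots,\bbx_n)\le1$; the bound on $N_2$ uses the uniformity of the tie-breaking rule, which the paper does impose. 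Both routes give the same constant $1+C(p)$. Your version is marginally tidier in that it produces an identity rather than a one-sided inequality before invoking the bounds, at the small cost of being tied to the specific tie-break distribution; the paper's version is more robust in that respect.
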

\begin{proof}
Since $f$ is nonnegative,
\begin{align*}
\ee(f(\bbx_{n,i})) &\le \ee(f(\bbx_i)) + \ee(f(\bbx_{n,i})1_{\{\bbx_{n,i}\ne X_i\}})\\
&\le \ee(f(\bbx_i)) + \sum_{j=1}^n \ee(f(\bbx_j)1_{\{\bbx_j = \bbx_{n,i},\, \bbx_j\ne \bbx_i\}}).
\end{align*}
Therefore by symmetry,
\begin{align*}
\ee(f(\bbx_{n,1})) &= \frac{1}{n}\sum_{i=1}^n \ee(f(\bbx_{n,i}))\\
&\le \frac{1}{n}\sum_{i=1}^n \ee(f(\bbx_i)) + \frac{1}{n}\sum_{i=1}^n\sum_{j=1}^n \ee(f(\bbx_j)1_{\{\bbx_j = \bbx_{n,i},\, \bbx_j\ne X_i\}})\\
&= \ee(f(\bbx_1)) + \frac{1}{n}\sum_{j=1}^n\ee\biggl(f(\bbx_j)\sum_{i=1}^n 1_{\{\bbx_j = \bbx_{n,i},\, \bbx_j\ne \bbx_i\}}\biggr)\\
&\le \ee(f(\bbx_1)) + \frac{1}{n}\sum_{j=1}^n\ee(f(\bbx_j)K_{n,j}) = \ee(f(\bbx_1)(1+K_{n,1})).
\end{align*}
By Lemma \ref{geomlmm}, this completes the proof. 
\end{proof}

For the next result, we will need the following version of Lusin's theorem (proved, for example, by combining \cite[Theorem 2.18 and Theorem 2.24]{rudin87}). 
\begin{lmm}[Special case of Lusin's theorem]\label{lusinthm}
Let $f:\rr^p\to \rr$ be a measurable function and $\gamma$ be a probability measure on $\rr^p$. Then, given any $\ve>0$, there is a compactly supported continuous function $g:\rr^p\to \rr$ such that $\gamma(\{\bx: f(\bx)\ne g(\bx)\}) <\ve$.
\end{lmm}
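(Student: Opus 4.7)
The plan is to reduce the statement to two classical facts: inner regularity of finite Borel measures on $\rr^p$ (tightness), and the combination of the inner-regular form of Lusin's theorem on compact sets with the Tietze extension theorem. Since $\gamma$ is a probability measure (in particular finite) on the Polish space $\rr^p$, it is automatically Radon; hence for any $\ve>0$ there is a compact set $K\subseteq\rr^p$ with $\gamma(\rr^p\setminus K)<\ve/2$. This reduces the problem to handling $f$ on $K$.

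Next I would invoke the inner-regular form of Lusin's theorem, which says that for a Borel-measurable function $f$ and a Radon measure $\gamma$ on a Polish space, and for any compact $K$ of finite measure, there exists a compact subset $K'\subseteq K$ such that $\gamma(K\setminus K')<\ve/2$ and $f$ restricted to $K'$ is continuous. (This is essentially Rudin's \S 2.23--2.24, or alternatively it can be extracted directly from Egorov's theorem applied to a sequence of simple-function approximations of $f$, together with inner regularity to turn each exceptional set into something contained in a compact good set.) Apply this to the $K$ from the previous step.

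Now $f|_{K'}$ is a continuous real-valued function on a compact subset of $\rr^p$. By the Tietze extension theorem, it extends to a continuous function $\tilde f:\rr^p\to\rr$. To make the output compactly supported, choose a continuous bump function $\varphi:\rr^p\to[0,1]$ that equals $1$ on $K'$ and vanishes outside some larger compact neighborhood of $K'$ (e.g., take $\varphi(\bx)=\max\{0,1-d(\bx,K')\}$ composed with a suitable cutoff, or use Urysohn's lemma on $\rr^p$). Then set $g:=\varphi\cdot\tilde f$. This $g$ is continuous and compactly supported, and $g=f$ on $K'$.

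Putting everything together, the set $\{\bx:f(\bx)\ne g(\bx)\}$ is contained in $(\rr^p\setminus K)\cup(K\setminus K')$, whose $\gamma$-measure is at most $\ve/2+\ve/2=\ve$. No single step is really the main obstacle here; the only subtlety worth noting is making sure one uses the inner-regular (closed/compact set) version of Lusin so that Tietze is directly applicable, rather than the weaker almost-everywhere-equal-to-a-continuous-function version, which would still require an additional regularity argument to upgrade.
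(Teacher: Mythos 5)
Your proof is correct and follows essentially the standard route that the paper has in mind: the paper simply cites Rudin (Theorems 2.18 and 2.24), and your argument — tightness of $\gamma$ to reduce to a compact $K$, the continuous-restriction form of Lusin's theorem on $K$, Tietze extension, and a cutoff to get compact support — is precisely the textbook proof behind that citation. The one point worth flagging (which you handle correctly) is that the output must be in $C_c(\rr^p)$, so after Tietze you do need the bump-function step; invoking only the ``$f$ agrees with a continuous function off a small set'' form would not suffice.
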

\begin{lmm}\label{nnthm}
For any measurable $f:\rr^p\to\rr$, $f(\bbx_1)-f(\bbx_{n,1})$ tends to $0$ in probability as $n\to\infty$. 
\end{lmm}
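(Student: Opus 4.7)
The plan is to approximate $f$ by a continuous compactly supported $g$ via Lusin's theorem (Lemma \ref{lusinthm}), reduce to the continuous case using Lemma \ref{nnlmm}, and control the error coming from the nearest neighbor via Lemma \ref{cplmm}.

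More explicitly, fix $\ve>0$. Let $\gamma$ denote the law of $\bbx_1$. By Lemma \ref{lusinthm} there is a compactly supported continuous $g:\rr^p\to\rr$ with
\[
\gamma(\{\bx: f(\bx)\ne g(\bx)\})<\ve.
\]
Write the telescoping decomposition
\[
f(\bbx_1)-f(\bbx_{n,1}) = \bigl(f(\bbx_1)-g(\bbx_1)\bigr) + \bigl(g(\bbx_1)-g(\bbx_{n,1})\bigr) + \bigl(g(\bbx_{n,1})-f(\bbx_{n,1})\bigr).
\]
I will show each of the three pieces is small in probability.

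The first piece is exactly $0$ outside an event of probability less than $\ve$, by the choice of $g$. For the third piece, apply Lemma \ref{cplmm} to the nonnegative measurable function $h(\bx):=\mathbf{1}_{\{f(\bx)\ne g(\bx)\}}$: this gives
\[
\pp\bigl(f(\bbx_{n,1})\ne g(\bbx_{n,1})\bigr)=\ee(h(\bbx_{n,1}))\le C(p)\,\ee(h(\bbx_1))<C(p)\ve,
\]
so outside an event of probability less than $C(p)\ve$, the third piece vanishes. For the middle piece, Lemma \ref{nnlmm} gives $\bbx_{n,1}\to\bbx_1$ almost surely, and since $g$ is continuous we have $g(\bbx_{n,1})\to g(\bbx_1)$ almost surely, hence in probability.

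Combining, for any $\eta>0$,
\[
\pp(|f(\bbx_1)-f(\bbx_{n,1})|>\eta)\le \ve + C(p)\ve + \pp(|g(\bbx_1)-g(\bbx_{n,1})|>\eta),
\]
and the last term tends to $0$ as $n\to\infty$. Since $\ve$ was arbitrary, $f(\bbx_1)-f(\bbx_{n,1})\to 0$ in probability. The only non-routine step is recognizing that the nearest neighbor $\bbx_{n,1}$ is not independent of $\bbx_1$ (indeed they are very dependent), so one cannot simply compare marginal distributions; Lemma \ref{cplmm} is precisely the tool that controls the law of $\bbx_{n,1}$ in terms of that of $\bbx_1$ and makes the approximation argument go through.
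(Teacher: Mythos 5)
Your proof is correct and follows essentially the same route as the paper's: approximate $f$ by a compactly supported continuous $g$ via Lusin's theorem, handle the continuous part with Lemma \ref{nnlmm}, and control the nearest-neighbor error term with Lemma \ref{cplmm}. The only cosmetic difference is that you phrase the comparison as a telescoping decomposition rather than directly bounding the tail probability, but the three estimates are identical.
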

\begin{proof}
Fix some $\ve>0$. Let $g$ be a function as in Lemma~\ref{lusinthm}, for the given $f$ and $\ve$, and $\gamma=$ the law of $\bbx_1$. Then note that for any $\delta >0$, 
\begin{align*}
&\pp(|f(\bbx_1)-f(\bbx_{n,1})|>\delta) \\
&\le \pp(|g(\bbx_1)-g(\bbx_{n,1})|>\delta)+ \pp(f(\bbx_1)\ne g(\bbx_1)) \\
&\qquad + \pp(f(\bbx_{n,1})\ne g(\bbx_{n,1})).
\end{align*}
By Lemma \ref{nnlmm} and the continuity of $g$, 
\[
\lim_{n\to \infty} \pp(|g(\bbx_1)-g(\bbx_{n,1})|>\delta) =0.
\]
By the construction of $g$,
\[
\pp(f(\bbx_1)\ne g(\bbx_1))< \ve.
\]
Finally, by Lemma \ref{cplmm}, 
\[
\pp(f(\bbx_{n,1})\ne g(\bbx_{n,1})) \le C(p) \pp(f(\bbx_1)\ne g(\bbx_1))\le C(p)\ve.
\]
Putting it all together, we get
\[
\limsup_{n\to\infty} \pp(|f(\bbx_1)-f(\bbx_{n,1})|>\delta)\le \ve + C(p) \ve.
\]
Since $\ve$ and $\delta$ are arbitrary, this completes the proof of the lemma.
\end{proof}

Let $(Y_1,\bbx_1),\ldots,(Y_n, \bbx_n)$ be i.i.d.~copies of $(Y,\bbx)$. Let $F_n$ be the empirical distribution function of $Y_1,\ldots,Y_n$, that is,
\begin{equation*}
F_n(t) = \frac{1}{n}\sum_{i=1}^n1_{\{Y_i\le t\}}.
\end{equation*}
Also let
\begin{equation*}
G_n(t) = \frac{1}{n}\sum_{i=1}^n1_{\{Y_i\ge t\}}.
\end{equation*}
For each $i$, let $N(i)$ be the index $j$ such that $\bbx_j = \bbx_{n,i}$ (ties broken at random). 
Define 
\begin{equation}\label{qndef3}
Q_n = Q_n(Y,\bbx) := \frac{1}{n}\sum_{i=1}^n (\min\{F_n(Y_i), F_n(Y_{N(i)})\} - G_n(Y_i)^2). 
\end{equation}
Note that this is exactly the statistic $Q_n(Y,\bbx)$ defined in equation \eqref{qndef2} of Section \ref{resultsec2}. 
\begin{lmm}\label{expthm}
Let $Q_n$ be defined as above. Then
\[
\lim_{n\to\infty} \ee(Q_n(Y,\bbx)) = Q(Y, \bbx). 
\] 
\end{lmm}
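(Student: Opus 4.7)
My plan is to exploit the symmetry of the sum and then pass to the limit via Glivenko--Cantelli and the nearest-neighbor convergence lemma (Lemma \ref{nnthm}). By symmetry of the $(Y_i, \bbx_i)$,
\[
\ee(Q_n(Y,\bbx)) = \ee\bigl(\min\{F_n(Y_1), F_n(Y_{N(1)})\}\bigr) - \ee(G_n(Y_1)^2).
\]
Since $F_n$ is non-decreasing, $\min\{F_n(Y_1), F_n(Y_{N(1)})\} = F_n(\min(Y_1, Y_{N(1)}))$. The classical Glivenko--Cantelli theorem gives $\sup_t|F_n(t) - F(t)| \to 0$ a.s.\ and similarly for $G_n$; since all quantities involved are bounded by $1$, bounded convergence lets us replace $F_n$ by $F$ and $G_n$ by $G$ inside the expectations, so it suffices to compute
\[
\lim_{n\to\infty}\ee\bigl(F(\min(Y_1, Y_{N(1)}))\bigr) - \ee(G(Y_1)^2).
\]

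For the second term, $\ee(G(Y_1)^2) = \int G(t)^2\, d\mu(t)$ (this does not depend on $n$). For the first term I would introduce an independent copy $Y_0 \sim \mu$ of $Y_1$ and write
\[
F(\min(Y_1, Y_{N(1)})) = \pp\bigl(Y_0 \le \min(Y_1, Y_{N(1)})\,\big|\, Y_1, Y_{N(1)}\bigr) = \int 1_{\{Y_1\ge t\}}1_{\{Y_{N(1)}\ge t\}}\, d\mu(t).
\]
Given the entire $\bbx$-sample and the identity of the nearest-neighbor index $N(1)$, the values $Y_1$ and $Y_{N(1)}$ are independent with conditional distributions $\mu_{\bbx_1}$ and $\mu_{\bbx_{N(1)}}$ respectively. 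Taking expectations first in $Y_1, Y_{N(1)}$ and then exchanging with the $t$-integration via Fubini yields
\[
\ee\bigl(F(\min(Y_1, Y_{N(1)}))\bigr) = \int \ee\bigl(G_{\bbx_1}(t)\, G_{\bbx_{N(1)}}(t)\bigr)\, d\mu(t).
\]

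The final step is to pass to the limit inside this integral. For each fixed $t$, the function $\bx\mapsto G_\bx(t)$ is measurable and bounded, so Lemma \ref{nnthm} gives $G_{\bbx_{N(1)}}(t) \to G_{\bbx_1}(t)$ in probability as $n\to\infty$, and bounded convergence implies $\ee(G_{\bbx_1}(t)\,G_{\bbx_{N(1)}}(t)) \to \ee(G_{\bbx_1}(t)^2)$. Another bounded-convergence step in the outer $t$-integral gives
\[
\lim_{n\to\infty}\int \ee\bigl(G_{\bbx_1}(t)\,G_{\bbx_{N(1)}}(t)\bigr)\, d\mu(t) = \int \ee\bigl(G_\bbx(t)^2\bigr)\, d\mu(t).
\]
Subtracting $\int G(t)^2\,d\mu(t) = \int \ee(G_\bbx(t))^2\, d\mu(t)$ produces exactly $\int \var(G_\bbx(t))\, d\mu(t) = Q(Y,\bbx)$, as desired.

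The only real subtlety is booking the bounded-convergence applications correctly: the Glivenko--Cantelli replacement and the nearest-neighbor replacement must each be justified by a uniform-in-$n$ domination argument (both are trivial since the integrands lie in $[0,1]$), and the $\mu$-integration requires Fubini at an intermediate step. Once those interchanges are handled, the identification of the limit with $Q(Y,\bbx)$ follows from the defining formula \eqref{qdef}.
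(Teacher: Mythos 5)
Your proof is correct and follows essentially the same route as the paper: Glivenko--Cantelli plus boundedness to replace the empirical CDFs by the true ones, the representation $\min\{F(Y_1), F(Y_{N(1)})\} = \int 1_{\{Y_1\ge t\}}1_{\{Y_{N(1)}\ge t\}}\,d\mu(t)$, conditioning on the $\bbx$-sample and tie-breaking to obtain $\ee(G_{\bbx_1}(t)G_{\bbx_{N(1)}}(t))$, and then Lemma \ref{nnthm} with bounded convergence. The paper packages the Glivenko--Cantelli step via an intermediate statistic $Q_n'$ with $|Q_n-Q_n'|\le 3\Delta_n$, but that is a cosmetic difference from your direct replacement argument.
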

\begin{proof}
Let  
\begin{equation}\label{qnpdef}
Q_n' := \frac{1}{n}\sum_{i=1}^n (\min\{F(Y_i), F(Y_{N(i)})\} - G(Y_i)^2).
\end{equation}
and let 
\[
\Delta_n := \sup_{t\in \rr} |F_n(t)-F(t)| + \sup_{t\in \rr} |G_n(t)-G(t)|.
\]
Then by the triangle inequality, 
\begin{equation}\label{qnqn}
|Q_n' - Q_n|\le 3\Delta_n.
\end{equation}
On the other hand, by the Glivenko--Cantelli theorem, $\Delta_n \to 0$ almost surely as $n\to\infty$. Since $\Delta_n$ is bounded by $2$, this implies that 
\[
\lim_{n\to \infty} \ee|Q_n'-Q_n|=0.
\]
Thus, it suffices to show that $\ee(Q_n')$ converges to $Q(Y,\bbx)$. First, notice that
\begin{align*}
\min\{F(Y_1),F(Y_{N(1)})\} &= \int 1_{\{ Y_1\ge t\}} 1_{\{Y_{N(1)}\ge t\}}d\mu(t).
\end{align*}
Let $\mf$ be the $\sigma$-algebra generated by $\bbx_1,\ldots,\bbx_n$ and the random variables used for breaking ties in the selection of nearest neighbors. Then for any $t$,
\begin{align*}
\ee(1_{\{Y_1\ge t\}} 1_{\{Y_{N(1)}\ge t\}}|\mf) &= G_{\bbx_1}(t) G_{\bbx_{N(1)}}(t). 
\end{align*}
Note that $\bbx_{N(1)} = \bbx_{n,1}$. Also, recall that by the properties of the regular conditional probability $\mu_\bx$, the map $\bx \mapsto G_\bx(t)$ is measurable. Therefore by the above identity and Lemma~\ref{nnthm}, we have 
\begin{align*}
\lim_{n\to\infty}\ee (1_{\{Y_1\ge t\}} 1_{\{Y_{N(1)}\ge t\}}) &= \ee(G_\bbx(t)^2).
\end{align*}
Thus,
\begin{align*}
\lim_{n\to \infty} \ee(Q_n')&=\int (\ee(G_\bbx(t)^2) - G(t)^2) d\mu(t).
\end{align*}
Since $\ee(G_\bbx(t))=G(t)$, this completes the proof of the lemma. 
\end{proof}
\begin{lmm}\label{concthm}
There are positive constants $C_1$ and $C_2$ depending only on the dimension $p$ such that for any $n$ and any $t\ge 0$,
\[
\pp(|Q_n - \ee(Q_n)|\ge t) \le C_1e^{-C_2nt^2}.
\]
\end{lmm}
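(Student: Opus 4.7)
The plan is to establish this concentration inequality via McDiarmid's bounded differences inequality, exploiting the fact that $Q_n$ is a symmetric function of the i.i.d.~sample $(Y_1,\bbx_1),\ldots,(Y_n,\bbx_n)$. I would first show that replacing one data point by an independent copy changes $Q_n$ by at most $C(p)/n$ for some constant depending only on $p$, and then invoke McDiarmid directly.

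To verify the bounded differences property, fix an index $i$ and replace $(Y_i,\bbx_i)$ by $(Y_i',\bbx_i')$. I would decompose the resulting change in $Q_n$ into two sources. The first source comes from the empirical distribution functions $F_n$ and $G_n$: at any point these change by at most $1/n$, so for each individual term in the defining sum the quantity $\min\{F_n(Y_j),F_n(Y_{N(j)})\} - G_n(Y_j)^2$ shifts by $O(1/n)$. Since there are $n$ terms and an overall prefactor $1/n$, the aggregate contribution from this source is $O(1/n)$. The second source comes from changes in the nearest-neighbor structure: the term $j=i$ obviously changes, and a term $j\ne i$ can see its nearest neighbor $N(j)$ change only if $N(j)=i$ in the original configuration, or $N(j)=i$ in the modified one. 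By Lemma \ref{geomlmm}, each of these sets of indices has size at most $C(p)$; since each affected term lies in $[-1,1]$, their total contribution is bounded by $(2C(p)+1)/n$. Combining the two sources gives a bounded differences constant of the form $C(p)/n$.

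With this in hand, McDiarmid's inequality immediately gives
\[
\pp(|Q_n-\ee(Q_n)|\ge t) \le 2\exp\!\left(-\frac{2t^2}{\sum_{i=1}^n (C(p)/n)^2}\right) = 2\exp\!\left(-\frac{2n t^2}{C(p)^2}\right),
\]
which is the claimed bound with $C_1=2$ and $C_2=2/C(p)^2$.

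The only nontrivial step is the nearest-neighbor bookkeeping in the bounded differences argument, but this is really just a matter of carefully identifying which terms of the sum are sensitive to the swap. The empirical-CDF contribution is routine, and the nearest-neighbor contribution is controlled by the deterministic geometric bound from Lemma \ref{geomlmm}. I would expect the main obstacle to be purely notational: carefully indexing which $j$'s have $N(j)$ affected, and handling the randomness used to break ties (so that the comparison is valid pointwise after coupling the tie-breaking randomizations between the original and modified samples). Once this coupling is set up, the rest is mechanical.
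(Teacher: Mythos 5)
There is a genuine gap. Your bounded-differences constant $C(p)/n$ does not hold pointwise when the law of $\bbx$ has atoms, and the lemma is stated (and is needed in Theorem~\ref{mainthm}) with no continuity assumption. The key issue is that Lemma~\ref{geomlmm} bounds only the number of $j$ with $\bbx_j\ne\bbx_i$ for which $\bbx_i$ is a nearest neighbor of $\bbx_j$; it says nothing about indices $j$ with $\bbx_j=\bbx_i$. If $\bbx_i$ belongs to a ``cluster'' of $m$ points sharing the same location, then for every $j$ in that cluster $N(j)$ is drawn uniformly at random from the other $m-1$ cluster members, so the set $\{j:N(j)=i\}$ can in the worst case have size $m-1$, and $m$ can be of order $n$. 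Moving $\bbx_i$ out of the cluster then changes the candidate set $\mc\setminus\{j\}$ for \emph{every} remaining cluster member $j$, and under no coupling of the tie-breaking variables can you guarantee that only $O(1)$ of the assignments $N(j)$ change. Both of your error sources are affected: the count of indices with $j=i$ or $N(j)=i$ enters your CDF bound, and the count of indices with $N(j)$ changed enters your nearest-neighbor bound. So the claimed $C(p)/n$ Lipschitz constant fails, and a direct McDiarmid application to $Q_n$ breaks down.

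The paper's proof explicitly flags that $A_n := \frac{1}{n}\sum_i\min\{F_n(Y_i),F_n(Y_{N(i)})\}$ is ``not well-behaved with respect to this kind of perturbation.'' Its remedy is to condition out precisely the troublesome randomness: it sets $\mg$ to be the $\sigma$-algebra generated by the data and the singleton tie-breaks, and works with $A_n':=\ee(A_n\mid\mg)$. This replaces the random within-cluster nearest-neighbor choices by deterministic cluster averages $b(\mc)$, and one then checks (via a case analysis over cluster/singleton transitions) that $A_n'$ does satisfy bounded differences with constant $C(p)/n$. The remaining gap $A_n-A_n'$ is controlled by a separate application of McDiarmid conditional on $\mg$, because conditionally $A_n$ depends only on the cluster tie-break variables and changing one of those alters a single $N(i)$. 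Your argument would be correct if $\bbx$ were assumed to have a continuous distribution (ties of probability zero), but as stated the lemma requires handling clusters, and that is where the paper's two-stage decomposition is essential.
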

\begin{proof}
Throughout this proof, $C(p)$ will denote any constant that depends only on $p$. The value of $C(p)$ may change from line to line. 

In addition to the variables $\bbx_i$ and $Y_i$, in this proof we will make use of i.i.d.~Uniform$[0,1]$ random variables $U_1,\ldots, U_n$, where $U_i$ is used for breaking ties if $\bbx_i$ has multiple nearest neighbors.

Our plan is to use  the bounded difference concentration inequality~\cite{mcdiarmid89}. For that, we have to get a bound on the maximum possible change in $Q_n$  if one $(Y_i, \bbx_i, U_i)$ is replaced by some alternative value $(Y_i', \bbx_i', U_i')$. We first write $Q_n = A_n + B_n$, where
\[
A_n := \frac{1}{n}\sum_{i=1}^n \min\{F_n(Y_i), F_n(Y_{N(i)})\}, \ \ \ B_n := \frac{1}{n}\sum_{i=1}^n G_n(Y_i)^2.
\] 
It is not hard to see that after the above replacement, each $G_n(Y_j)$ can change by at most $1/n$, and since these quantities are in $[0,1]$, $B_n$ can change by at most $2/n$. Therefore the bounded difference inequality gives
\begin{align}\label{bntail}
\pp(|B_n-\ee(B_n)|\ge t) \le 2e^{-nt^2/8}.
\end{align}
Unfortunately, $A_n$ is not well-behaved with respect to this kind of  perturbation, so we have to first replace $A_n$ by some more manageable quantity. Take a realization of $(Y_1,\bbx_1,U_1),\ldots,(Y_n, \bbx_n, U_n)$. Define an equivalence relation on $\{1,\ldots,n\}$ by declaring that $i$ and $j$ are equivalent if $\bbx_i=\bbx_j$. Call an equivalence class a `cluster' if its size is greater than one, and a `singleton' otherwise. Note that if $i$ belongs to a cluster $\mc$, then $N(i)$ must necessarily be also a member of the same cluster. In fact, $N(i)$ would be chosen uniformly at random (using $U_i$) from $\mc\setminus\{i\}$. 

Let $\fc$ denote the set of all clusters and $\fs$ denote the set of all singletons. For convenience, let us define 
\[
a_{i,j} := \min\{F_n(Y_i), F_n(Y_j)\},
\]
so that
\[
A_n = \frac{1}{n}\sum_{\mc\in \fc} \sum_{i\in \mc}a_{i,N(i)} + \frac{1}{n}\sum_{i\in \fs} a_{i,N(i)}. 
\]
Let $\mg$ denote the $\sigma$-algebra generated by $(Y_1,\bbx_1),\ldots, (Y_n,\bbx_n)$ and $(U_i)_{i\in \fs}$. Define $A_n' := \ee(A_n|\mg)$. Then it is clear that
\begin{align}\label{anpdef}
A_n' &= \frac{1}{n}\sum_{\mc\in \fc} b(\mc) + \frac{1}{n}\sum_{i\in \fs} a_{i,N(i)},
\end{align}
where
\[
b(\mc) := \frac{1}{|\mc|-1}\sum_{i\in \mc}\sum_{j\in \mc\setminus\{i\}}a_{i,j}. 
\]
We will now use the bounded difference inequality to get a tail bound for the difference $A_n-A_n'$. Conditional on $\mg$, $A_n$ is a function of $(U_i)_{i\notin\fs}$. If one such $U_i$ is replaced by some other value $U_i'$, then only $N(i)$ may be affected. Thus, $A_n$ changes by at most $1/n$. Therefore, the bounded difference inequality gives 
\[
\pp(|A_n-A_n'|\ge t|\mg) \le 2e^{-nt^2/2}. 
\]
Since the right side is deterministic, we can remove the conditioning on the left. But then the tail bound gives $\ee|A_n - A_n'|< 3n^{-1/2}$. Therefore,
\begin{align}
&\pp(|A_n-\ee(A_n)|\ge 3n^{-1/2} + t) \notag\\
&\le \pp(|A_n-A_n'|\ge t/2) + \pp(|A_n'-\ee(A_n')|\ge t/2) \notag\\
&\le 2e^{-nt^2/8}+ \pp(|A_n'-\ee(A_n')|\ge t/2).\label{antail}
\end{align}
So we now need to get a tail bound for $A_n'-\ee(A_n')$. Fortunately, $A_n'$ is well-behaved with respect to perturbing one coordinate. Let us now try to figure out the maximum possible change in $A_n'$ if some $(Y_i,\bbx_i, U_i)$ is replaced by an alternative value $(Y_i', \bbx_i', U_i')$. We will do this in stages. First, let us replace $\bbx_i$ by $\bbx_i'$, keeping $Y_i$ and $U_i$ fixed. We know by Lemma \ref{geomlmm} that in any configuration, for any $i$ there can be at most $C(p)$ singletons $j$ such that $i$ is a nearest neighbor of $j$ (not necessarily the chosen one). This fact will be used many times in the following argument.  There are several cases to consider:
\begin{enumerate}
\item Suppose that $i$ is in some cluster $\mc$ of size $\ge 3$ in the original configuration, and lands up in some other cluster $\mc'$ in the new configuration. Then the set of singletons is the same in the two configurations. If $j$ is a singleton, then $N(j)$ can change only if $i$ is a nearest neighbor of $j$ in either the original configuration or the final configuration. As noted above, there can be at most $C(p)$ such $j$. Therefore, due to these changes, $A_n'$ can change by at most $C(p)/n$. On the other hand, $b(\mc)$ changes by at most $2$, as seen from the following computation:
\begin{align*}
&|b(\mc) - b(\mc\setminus\{i\})| \\
&= \biggl|\frac{1}{|\mc|-1}\sum_{j\in \mc}\sum_{k\in \mc\setminus\{j\}} a_{j,k} - \frac{1}{|\mc|-2}\sum_{j\in \mc\setminus\{i\}}\sum_{k\in \mc\setminus\{i,j\}} a_{j,k}\biggr|\\
&= \biggl|\frac{1}{|\mc|-1}\sum_{k\in \mc\setminus\{i\}} a_{i,k} + \frac{1}{|\mc|-1} \sum_{j\in \mc\setminus\{i\}} a_{j,i}\\
&\qquad \qquad \qquad  - \frac{1}{(|\mc|-1)(|\mc|-2)}\sum_{j\in \mc\setminus\{i\}}\sum_{k\in \mc\setminus\{i,j\}} a_{j,k}\biggr|\le 2,
\end{align*}
where the last inequality holds because the $a_{i,j}$'s are in $[0,1]$. 
A similar calculation shows that $b(\mc')$ also changes by at most $1$. Thus, overall, $A_n'$ changes by at most $C(p)/n$.
\item Suppose that $i$ is in some cluster $\mc$ of size $\ge 3$ in the original configuration, and pairs up with a singleton to form a new cluster in the new configuration. Again, $b(\mc)$ changes by at most $2$, and the contributions from the singletons in \eqref{anpdef} changes by at most $C(p)/n$, by the same logic as in case (1). The formation of the new cluster causes a change of at most $2/n$. Therefore, again, the change in $A_n'$ is at most $C(p)/n$.
\item Suppose that $i$ is in some cluster $\mc$ of size $\ge 3$ in the original configuration, and becomes a singleton in the new configuration. Then just as before, $b(\mc)$ changes by at most $2$, and the contributions from singletons changes by at most $C(p)/n$. 
\item Suppose that $i$ is in some cluster $\mc$ of size $2$ in the original configuration, and pairs up with a singleton to form a new cluster in the new configuration. Again, the number of singletons $j$ for which $N(j)$ changes due to this operation is bounded by $C(p)$, and the contributions from the clusters terms in \eqref{anpdef} also changes by at most a bounded amount. Thus, the change in $A_n'$ is at most $C(p)/n$.
\item Suppose that $i$ is in some cluster $\mc$ of size $2$ in the original configuration, and becomes a singleton in the new configuration. Proceeding as before, we see that $A_n'$ changes by at most $C(p)/n$. 
\item Suppose that $i$ is a singleton in the original configuration and remains so in the new configuration. Again, it is clear that the change in $A_n'$ is at most $C(p)/n$.
\item All other cases are just reverses of the situations considered above. For example, if $i$ is a singleton in the original configuration and becomes part of a cluster of size $\ge 3$ in the new configuration, that's just the reverse of case (3). 
\end{enumerate}
Thus, we conclude that changing $\bbx_i$ to $\bbx_i'$ changes $A_n'$ by at most $C(p)/n$. Next, let us change $Y_i$ to $Y_i'$. Then $F_n(Y_j)$ changes by at most $1/n$ for each $j\ne i$, and $F_n(Y_i)$ changes by at most $1$. Therefore each $a_{j,k}$ changes by at most $1/n$ if $j\ne i$ and $k\ne i$, and by at most $1$ if either index equals $i$. From this it is easy to see that $A_n'$ can change by at most $1/n$. Finally, let us replace $U_i$ by $U_i'$. Then only $N(i)$ can change, and hence $A_n'$ can change by at most $1/n$. Combing all three steps, we get
\[
\pp(|A_n'-\ee(A_n')|\ge t) \le 2e^{-C(p)nt^2}. 
\]
Therefore by \eqref{bntail} and \eqref{antail}, we get
\begin{align*}
\pp(|A_n-\ee(A_n)|\ge 3n^{-1/2}+t) \le 6e^{-C(p)nt^2}. 
\end{align*}
If $t\ge 3n^{-1/2}$, this  bound  holds for $\pp(|A_n-\ee(A_n)|\ge 2t)$. If $t<3n^{-1/2}$, we can choose $C_1\ge 6$ so that $C_1e^{-C(p) nt^2} \ge 1$, so that it is trivially a bound for $\pp(|A_n-\ee(A_n)|\ge 2t)$. This completes the proof.
\end{proof}
Combining Lemmas \ref{expthm} and \ref{concthm}, we get the following corollary.
\begin{cor}\label{convascor}
As $n\to \infty$, $Q_n(Y,\bbx)\to Q(Y,\bbx)$ almost surely. 
\end{cor}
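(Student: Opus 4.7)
The plan is a standard Borel--Cantelli argument bridging the two preceding lemmas. Lemma \ref{expthm} already gives convergence of the expectations, $\ee(Q_n) \to Q(Y,\bbx)$, and Lemma \ref{concthm} provides sub-Gaussian concentration of $Q_n$ about its mean, so combining the two should yield almost sure convergence immediately.

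More concretely, fix any $\ve > 0$. Lemma \ref{concthm} supplies constants $C_1, C_2 > 0$ (depending only on $p$) such that
\[
\pp(|Q_n - \ee(Q_n)| \ge \ve) \le C_1 e^{-C_2 n \ve^2}.
\]
The right-hand side is summable in $n$, so by the Borel--Cantelli lemma the event $\{|Q_n - \ee(Q_n)| \ge \ve\}$ occurs only finitely often, almost surely. Intersecting over a countable sequence $\ve = 1/k$ gives $Q_n - \ee(Q_n) \to 0$ almost surely. Then invoking Lemma \ref{expthm}, which identifies the deterministic limit of $\ee(Q_n)$ as $Q(Y,\bbx)$, I conclude $Q_n \to Q(Y,\bbx)$ almost surely by the triangle inequality.

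There is no real obstacle here, since all the substantive work has been packaged into Lemmas \ref{expthm} and \ref{concthm}; this corollary is merely the standard observation that exponential concentration plus convergence of the mean implies almost sure convergence. The only minor care needed is that the constants $C_1, C_2$ in Lemma \ref{concthm} do not depend on $n$, which is indeed how that lemma is stated, so the summability step is valid without further argument.
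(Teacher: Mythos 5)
Your proof is correct and is exactly the argument the paper has in mind: the paper simply states that the corollary follows by "combining Lemmas \ref{expthm} and \ref{concthm}," and your Borel--Cantelli bridge is the standard way to make that combination precise.
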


\section{Proof of Theorem \ref{basethm}}
Note that convergence of $Q_n(Y, \bbz)$ to the deterministic limit $c$ is the result of Corollary \ref{convascor} (applied to the pair $(Y,\bbz)$ instead of $(Y, \bbx)$). Showing that $S_n(Y)$ converges to $d$ is easier. Let
\[
S_n'(Y) = \frac{1}{n}\sum_{i=1}^n G(Y_i)(1 - G(Y_i)),
\]
and 
\[
\Delta_n := \sup_{t\in \rr} |G_n(t)-G(t)|.
\]
Then by triangle inequality $|S_n(Y) - S_n'(Y)| \leq 4\Delta_n$, and by the Glivenko--Cantelli theorem  $\Delta_n\rightarrow 0$ almost surely. So it is enough to show that $S_n'(Y)$ converges almost surely to $d$. But that is a consequence of the strong law of large numbers, since the $Y_i$'s are i.i.d and 
\[
\ee(G(Y_i)(1 - G(Y_i))) = \int G(t)(1 - G(t))d\mu(t) = d.
\] 
This completes the proof of the convergence claims in the theorem. Next, by combining Corollary~\ref{convascor} and Lemma~\ref{indepthm1}, we see that if $Y$ and $\bbx$ are independent, then $c=0$. This proves claim (i) in the theorem. On the other hand, if $Y$ is a function of $\bbz$, say $Y = f(\bbz)$ almost surely, then 
\begin{align*}
c &= \int\var (\pp(Y \geq t| \bbz))d\mu(t) \\
&= \int \var (\ee (1_{\{Y\geq t\}}|\bbz))d\mu(t)\\
&= \int \var (1_{\{f(\bbz)\geq t\}})d\mu(t)\\
&= \int \ee (1_{\{f(\bbz)\geq t\}})(1 - \ee(1_{\{f(\bbz)\geq t\}})) d\mu(t) = d,
\end{align*}
which proves claim (ii) in the theorem. 
Finally, by the law of total variance we have
\[
\var(1_{\{Y\geq t\}}) = \ee(\var(1_{\{Y\geq t\}}| \bbz)) + \var(\pp(Y\geq t| \bbz)),
\]
therefore $0\leq c\leq d$. Note that by Lemma \ref{indepthm1}, $c = 0$ if and only if $Y$ is independent of $\bbz$. To complete the proof of claim (iii), we have to show that if $c = d$ then $Y$ is almost surely a function of $\bbz$. If $c = d$, then 
\[
\int \ee(G_\bbz(t) - G_\bbz(t)^2) d\mu(t) = 0,
\]
which implies that $\pp(E)=1$, where $E$ is the event
\begin{equation}\label{edef}
\int G_\bbz(t)(1 - G_\bbz(t)) d\mu(t) = 0.
\end{equation}
Let $A$ be the support of $\mu$. Define
\[
a_{\bbz} := \sup\{t: G_\bbz(t) = 1\},  \ \ \ b_{\bbz} := \inf\{t: G_\bbz(t)=0\},
\]
so that $a_{\bbz}\le b_{\bbz}$. Now suppose that the event $\{a_{\bbz}<b_{\bbz}\}\cap E$ takes place. Since $G_\bbz(t)\in (0,1)$ for all $t\in (a_\bbz, b_\bbz)$, the condition \eqref{edef} implies that $\mu((a_\bbz, b_\bbz)) = 0$. Since $(a_\bbz, b_\bbz)$ is an open interval, this shows that $(a_\bbz, b_\bbz) \subseteq A^c$.  On the other hand, under the given circumstance, we also have $\pp(Y\in (a_\bbz, b_\bbz)|\bbz) > 0$. Thus $\pp(Y\in A^c|\bbz)>0$. 

The above argument implies that if $\pp(\{a_\bbz< b_\bbz\}\cap E)>0$, then $\pp(Y\in A^c)>0$. But this is impossible, since $A$ is the support of $\mu$. Therefore  $\pp(\{a_\bbz< b_\bbz\}\cap E)=0$. But $\pp(E)=1$. Therefore $\pp(a_{\bbz} = b_{\bbz})=1$. This implies that $Y$ is almost surely a function of $\bbz$. 

\section{Proof of Theorem \ref{mainthm}}
For the proof of Theorem \ref{mainthm}, we need some additional lemmas.

\begin{lmm}\label{expthmcond}
Let $Q_n(Y, \bbz|\bbx)$ be defined as in \eqref{qndef1}. Then $Q_n(Y, \bbz|\bbx)$ converges to $Q(Y, \bbz|\bbx)$ almost surely as $n\to\infty$, where
\begin{equation*}
Q(Y, \bbz| \bbx) := \int \ee(\var(G_{\bbw}(t)|\bbx)) d\mu(t),
\end{equation*}
where, as before, $\bbw = (\bbx, \bbz)$.
\end{lmm}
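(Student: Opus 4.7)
The plan is to recognize that $Q_n(Y,\bbz|\bbx)$ is a telescoping difference of two statistics of the form handled by Corollary \ref{convascor}, and then to identify the limit via the law of total variance. Concretely, writing $R_i = nF_n(Y_i)$, the definition in \eqref{qndef1} becomes
\[
Q_n(Y,\bbz|\bbx) = \frac{1}{n}\sum_{i=1}^n \min\{F_n(Y_i), F_n(Y_{M(i)})\} - \frac{1}{n}\sum_{i=1}^n \min\{F_n(Y_i), F_n(Y_{N(i)})\}.
\]
Adding and subtracting $\frac{1}{n}\sum_i G_n(Y_i)^2$, this decomposes as $\tilde Q_n(Y,\bbw) - Q_n(Y,\bbx)$, where $\tilde Q_n(Y,\bbw)$ is the statistic from \eqref{qndef3} applied to the pair $(Y,\bbw)$ with $\bbw = (\bbx,\bbz)$ (nearest neighbors being taken in $\rr^{p+q}$, which is exactly what $M(i)$ encodes), and $Q_n(Y,\bbx)$ is the same statistic for $(Y,\bbx)$.

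Next I would invoke Corollary \ref{convascor} twice: once for $(Y,\bbx)$ to get $Q_n(Y,\bbx) \to Q(Y,\bbx)$ almost surely, and once for $(Y,\bbw)$ to get $\tilde Q_n(Y,\bbw) \to Q(Y,\bbw)$ almost surely. Subtracting, we obtain $Q_n(Y,\bbz|\bbx) \to Q(Y,\bbw) - Q(Y,\bbx)$ almost surely. It remains only to identify this difference with the target limit $\int \ee(\var(G_\bbw(t)|\bbx))\, d\mu(t)$.

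This identification follows from the law of total variance applied pointwise in $t$. Since $\ee(G_\bbw(t)|\bbx) = \ee(\pp(Y\ge t\mid\bbx,\bbz)\mid \bbx) = \pp(Y\ge t\mid \bbx) = G_\bbx(t)$ by the tower property, we have
\[
\var(G_\bbw(t)) = \ee(\var(G_\bbw(t)|\bbx)) + \var(\ee(G_\bbw(t)|\bbx)) = \ee(\var(G_\bbw(t)|\bbx)) + \var(G_\bbx(t)).
\]
Integrating against $\mu$ and using the definitions \eqref{qdef} of $Q(Y,\bbw)$ and $Q(Y,\bbx)$ yields $Q(Y,\bbw) - Q(Y,\bbx) = \int \ee(\var(G_\bbw(t)|\bbx))\, d\mu(t) = Q(Y,\bbz|\bbx)$, completing the proof.

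There is no real obstacle here: the whole argument is a bookkeeping reduction to the already-established Corollary \ref{convascor}, combined with a one-line use of the law of total variance. The only thing worth double-checking is that the nearest-neighbor indices $M(i)$ in $\rr^{p+q}$ fit the hypotheses of Corollary \ref{convascor} when that corollary is applied with the ambient predictor $\bbw$ of dimension $p+q$; but this is automatic from the setup since nothing in Corollary \ref{convascor} depends on the dimension beyond it being finite.
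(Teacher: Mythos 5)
Your proposal is correct and coincides with the paper's own proof: both rest on the same decomposition $Q_n(Y,\bbz|\bbx) = Q_n(Y,\bbw) - Q_n(Y,\bbx)$ (after adding and subtracting the $G_n(Y_i)^2$ terms), both invoke Corollary \ref{convascor} twice, and both identify the limit as $\int \ee(\var(G_\bbw(t)|\bbx))\,d\mu(t)$ via the tower property and the law of total variance. The only difference is that you spell out the bookkeeping a bit more explicitly; the substance is the same.
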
 
\begin{proof}
Note that $Q_n(Y,\bbz|\bbx) = Q_n(Y,\bbw)-Q_n(Y,\bbx)$. Also,
\[
\ee(G_\bbw(t)|\bbx) = G_\bbx(t),
\]
which, by the law of total variance, gives
\begin{align*}
\var(G_\bbw(t)) - \var(G_\bbx(t)) &= \ee(\var(G_\bbw(t)|\bbx)). 
\end{align*}
Thus, 
\[
Q(Y,\bbz|\bbx) = Q(Y,\bbw)-Q(Y,\bbx).
\]
The result now follows by Corollary \ref{convascor}. 
\end{proof}
\begin{lmm}\label{expthmcondS}
For $S_n(Y, \bbx)$ defined in \eqref{sndef1}, 
\[
\lim_{n\rightarrow\infty}\ee(S_n(Y, \bbx)) = S(Y, \bbx)
\]
where $S(Y, \bbx) := \int\ee(\var(1_{\{Y\geq t\}}| \bbx))d\mu(t)$.
\end{lmm}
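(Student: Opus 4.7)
The plan is to mimic the strategy of Lemma \ref{expthm}: first replace the empirical CDF by the population CDF $F$, control the resulting error using Glivenko--Cantelli, and then compute the limit of the expectation using the conditional probability structure together with Lemma \ref{nnthm}.

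Since $R_i/n = F_n(Y_i)$, I would first rewrite
\[
S_n(Y,\bbx) = \frac{1}{n}\sum_{i=1}^n \bigl[F_n(Y_i) - \min\{F_n(Y_i), F_n(Y_{N(i)})\}\bigr],
\]
and then define the population analogue
\[
S_n'(Y,\bbx) := \frac{1}{n}\sum_{i=1}^n \bigl[F(Y_i) - \min\{F(Y_i), F(Y_{N(i)})\}\bigr].
\]
Using the Lipschitz bound $|\min\{a,b\}-\min\{a',b'\}| \le |a-a'|+|b-b'|$, one obtains $|S_n - S_n'| \le 3\Delta_n$ where $\Delta_n := \sup_t |F_n(t) - F(t)|$. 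Since $\Delta_n \le 2$ almost surely and $\Delta_n \to 0$ almost surely by Glivenko--Cantelli, bounded convergence yields $\ee|S_n - S_n'| \to 0$, so it suffices to prove $\ee(S_n') \to S(Y,\bbx)$.

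For the limit of $\ee(S_n')$, the key identity is
\[
F(Y_i) - \min\{F(Y_i), F(Y_{N(i)})\} = \int 1_{\{Y_{N(i)} < t \le Y_i\}}\, d\mu(t),
\]
which follows from $\min\{F(Y_i),F(Y_{N(i)})\} = F(\min(Y_i, Y_{N(i)}))$ and monotonicity of $F$. By symmetry and Fubini, $\ee(S_n') = \int \pp(Y_{N(1)} < t \le Y_1)\, d\mu(t)$. Conditioning on the $\sigma$-algebra $\mf$ generated by $(\bbx_i)_{i=1}^n$ and the tie-breaking randomness, and using that $N(1)\ne 1$ so $Y_1$ and $Y_{N(1)}$ are conditionally independent with laws $\mu_{\bbx_1}$ and $\mu_{\bbx_{N(1)}}$, one gets
\[
\pp(Y_1 \ge t,\, Y_{N(1)} < t \mid \mf) = G_{\bbx_1}(t)\bigl(1 - G_{\bbx_{N(1)}}(t)\bigr).
\]

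Finally, by Lemma \ref{nnthm} applied to the measurable map $\bx \mapsto G_\bx(t)$, we have $G_{\bbx_{N(1)}}(t) - G_{\bbx_1}(t) \to 0$ in probability; since the product in question is bounded by $1$, bounded convergence gives $\ee[G_{\bbx_1}(t)(1-G_{\bbx_{N(1)}}(t))] \to \ee[G_\bbx(t)(1-G_\bbx(t))]$, and the latter equals $\ee[\var(1_{\{Y\ge t\}}|\bbx)]$ by the conditional Bernoulli variance formula. A second application of bounded convergence, exchanging the limit with the outer $d\mu$-integral (justified because the inner expectations are uniformly bounded by $1$ and $\mu$ is a probability measure), yields
\[
\ee(S_n') \to \int \ee[\var(1_{\{Y\ge t\}}|\bbx)]\, d\mu(t) = S(Y,\bbx),
\]
completing the proof. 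The argument is entirely parallel to that of Lemma \ref{expthm}; there is no genuine obstacle, only the bookkeeping to verify the integral identity above and to swap the two limits with the $\mu$-integral.
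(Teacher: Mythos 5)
Your argument is correct and is essentially the paper's own proof, just written out in more detail (the paper condenses the second half into a single ``proceeding as in the proof of Lemma~\ref{expthm}'' step, arriving at $\int(G(t)-\ee(G_\bbx(t)^2))\,d\mu(t)$, which is the same quantity you compute via $\ee[G_\bbx(t)(1-G_\bbx(t))]$). The only cosmetic differences are the choice of Lipschitz constant ($3\Delta_n$ versus the paper's $4\Delta_n$) and your bound $\Delta_n\le 2$ where $\Delta_n\le 1$ suffices, neither of which affects the argument.
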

\begin{proof}
The proof uses techniques developed in the proof of Lemma~\ref{expthm}. Let 
\[
S_n'(Y, \bbx) = \frac{1}{n}\sum_{i=1}^n(F(Y_i) - \min\{F(Y_i), F(Y_{N(i)})\}),
\]
and 
\[
\Delta_n := \sup_{t\in\rr}|F_n(t) - F(t)|.
\]
By the triangle inequality, 
\[
|S_n'(Y, \bbx) - S_n(Y, \bbx)|\leq 4\Delta_n.
\]
By the Glivenko--Cantelli theorem, $\Delta_n\rightarrow 0$ almost surely and since $\Delta_n$ is bounded by 1, we can conclude that,
\[
\lim_{n\rightarrow\infty}\ee|S_n'(Y, \bbx) - S_n(Y, \bbx)| = 0.
\]
Then it is enough to show that $\ee (S_n'(Y, \bbx))$ converges to $S(Y, \bbx)$. Proceeding as in the proof of Lemma~\ref{expthm}, we get
\begin{align*}
\lim_{n\rightarrow\infty}\ee (S_n'(Y, \bbx)) &= \int( G(t) - \ee(G_{\bbx}(t)^2)) d\mu(t)\\
&= \int \ee(G_{\bbx}(t) - G_{\bbx}(t)^2) d\mu(t) = S(Y, \bbx),
\end{align*}
which completes the proof. 
\end{proof}

\begin{lmm}\label{concthmcondS}
There are positive constants $C_1$  and $C_2$ depending only on $p$ such that for any $n$ and any $t \geq 0$, 
\[
\pp(|S_n(Y, \bbx) - \ee(S_n(Y, \bbx))| \geq t) \leq C_1e^{-C_2nt^2}
\]
\end{lmm}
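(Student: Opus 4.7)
The plan is to mimic closely the strategy used in Lemma~\ref{concthm}. Write
\[
S_n(Y,\bbx) = C_n - A_n,
\]
where
\[
C_n := \frac{1}{n}\sum_{i=1}^n F_n(Y_i), \qquad A_n := \frac{1}{n}\sum_{i=1}^n \min\{F_n(Y_i), F_n(Y_{N(i)})\}.
\]
It suffices to obtain concentration bounds of the claimed form for $C_n$ and $A_n$ separately and combine via a union bound.

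For $C_n$, observe that replacing a single $(Y_i, \bbx_i, U_i)$ with an alternative value changes each $F_n(Y_j)$ by at most $1/n$, and since $F_n(Y_j)\in[0,1]$, the total change in $C_n$ is at most $2/n$. McDiarmid's bounded differences inequality \cite{mcdiarmid89} then yields a bound of the form $2e^{-nt^2/8}$ for $\pp(|C_n-\ee(C_n)|\ge t)$.

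For $A_n$, I would follow verbatim the cluster/singleton argument from the proof of Lemma~\ref{concthm}. Namely, partition the indices according to the equivalence relation $i\sim j$ iff $\bbx_i=\bbx_j$, distinguish clusters $\fc$ from singletons $\fs$, let $\mg$ be the $\sigma$-algebra generated by the $(Y_i,\bbx_i)$'s together with $(U_i)_{i\in\fs}$, and set $A_n':=\ee(A_n\mid \mg)$, which takes the explicit form analogous to \eqref{anpdef} with the same quantities $a_{i,j}=\min\{F_n(Y_i),F_n(Y_j)\}$ and $b(\mc)$ as before. Conditional on $\mg$, changing any one cluster tie-breaker $U_i$ alters $A_n$ by at most $1/n$, so the bounded differences inequality gives $\pp(|A_n-A_n'|\ge t/2 \mid \mg)\le 2e^{-nt^2/2}$, which (after removing the conditioning) implies in particular $\ee|A_n-A_n'|\le 3n^{-1/2}$. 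Then a tail bound for $A_n'-\ee(A_n')$ reduces the problem to checking the perturbation behavior of $A_n'$ under replacement of a single $(Y_i,\bbx_i,U_i)$. This is exactly the case analysis (1)--(7) already carried out for $A_n'$ in the proof of Lemma~\ref{concthm}: Lemma~\ref{geomlmm} bounds by $C(p)$ the number of singletons affected by moving one $\bbx_i$, the $a_{j,k}$'s lie in $[0,1]$ (so the $b(\mc)$ terms change by $O(1)$), and a change of $Y_i$ or $U_i$ perturbs $A_n'$ by at most $1/n$. Combining these, each coordinate perturbation changes $A_n'$ by at most $C(p)/n$, so McDiarmid gives $\pp(|A_n'-\ee(A_n')|\ge t)\le 2e^{-C(p)nt^2}$.

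Putting the two pieces together as in \eqref{antail}, we obtain $\pp(|A_n-\ee(A_n)|\ge 3n^{-1/2}+t)\le 6e^{-C(p)nt^2}$, and absorbing the $3n^{-1/2}$ term into the constants exactly as at the end of the proof of Lemma~\ref{concthm} yields $\pp(|A_n-\ee(A_n)|\ge t)\le C_1e^{-C_2nt^2}$. Combined with the $C_n$ bound and a union bound, this delivers the statement. The main (and essentially only) technical obstacle is the cluster analysis for $A_n$, which is why conditioning on $\mg$ and working with $A_n'$ is essential; but since it is structurally identical to the argument already done for the corresponding term in $Q_n$, no new ideas are required.
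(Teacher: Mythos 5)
Your proposal is correct and follows exactly the strategy the paper uses: the paper's proof is a two-sentence remark observing that the $A_n$-term $\frac{1}{n^2}\sum_i\min\{R_i,R_{N(i)}\}$ was already handled in the proof of Lemma~\ref{concthm} and that the remaining term $\frac{1}{n^2}\sum_i R_i$ yields to a direct application of the bounded-difference inequality. Your write-up simply spells out those two steps (the decomposition $S_n=C_n-A_n$, the $2/n$-bounded-difference argument for $C_n$, and the cluster/singleton argument for $A_n$) in the detail the paper leaves implicit.
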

\begin{proof}
The concentration for the second term in the definition \eqref{sndef1} was already argued in the proof of Lemma~\ref{concthm}. For the first term, a simple application of the bounded difference inequality suffices. 
\end{proof}

Finally, we are ready to prove Theorem \ref{mainthm}.

\begin{proof}[Proof of Theorem \ref{mainthm}]
Convergence of $Q_n(Y, \bbz|\bbx)$ almost surely to $a = Q(Y, \bbz| \bbx)$ is the content of Lemma \ref{expthmcond}, and convergence of $S_n(Y,\bbx)$ to $b= S(Y,\bbx)$ follows by Lemmas \ref{expthmcondS} and \ref{concthmcondS}. 

Let us now prove the claims (i), (ii) and (iii) of the theorem.  First, let us prove (i).  It is not hard to see that $a = Q(Y,\bbw) - Q(Y,\bbx)$. Thus if $Y$ and $\bbz$ are conditionally independent given $\bbx$, then by Lemma~\ref{monotonethm}, $a=0$. 
This proves (i). Next, note that  
\begin{align*}
b  - a &= \int \ee(\var(1_{\{Y\geq t\}}| \bbx) - \var(\ee (1_{\{Y\geq t\}}|\bbz, \bbx)| \bbx))d\mu(t)\\
&= \int\ee(\ee( \var(1_{\{Y\geq t\}}| \bbz, \bbx)|\bbx)) d\mu(t)\\
&= \int \ee(\var(1_{\{Y\geq t\}}|\bbz,\bbx)) d\mu(t).
\end{align*}
Now, if with probability one $Y$ is a function of $\bbz$ conditional on $\bbx$, then $\var(1_{\{Y\geq t\}}| \bbz, \bbx) = 0$ almost surely. Thus, the above expression shows that $a = b$ in this situation.

Finally, let us prove claim (iii). Note that the above expression for $b-a$ also shows that $0 \leq a\leq b$, since $\var(1_{\{Y\geq t\}}| \bbz, \bbx)\geq 0$. Thus, it suffices to prove the opposite implications for (i) and (ii). 

If  $a = 0$, then again by Lemma \ref{monotonethm}, we get that $Y$ and $\bbz$ are conditionally independent given $\bbx$. If $a = b$, then there exists a set $A\subseteq\rr$ such that $\mu(A) = 1$ and for any $t\in A$ we have 
\[
\var(1_{\{Y\geq t\}}| \bbz, \bbx) = 0
\]
almost surely. Proceeding as the last part of the proof of Theorem \ref{basethm}, we can now conclude that $Y$ is almost surely equal to a function of $\bbw$. This implies that $Y$ is almost surely a function of $\bbz$ conditional on $\bbx$. 
\end{proof}

\section{Proof of Theorem \ref{ratethm}}
Throughout this section, we will assume that the assumptions (A1) and (A2) from Section \ref{ratesec} hold. In the following lemma, $\bbx_{n,1}$ is the nearest neighbor of $\bbx_1$ among $\bbx_2,\ldots,\bbx_n$ (with ties broken at random), as in previous sections. 
\begin{lmm}\label{nndistlmm0}
Let $C_1$ and $C_2$ be as in assumption \textup{(A2)}. Then there is some $C_3$ depending only on $C_1$, $C_2$ and $p$ such that
\begin{align*}
\ee(\min\{\|\bbx_1 - \bbx_{n,1}\|,1\})\le
\begin{cases}
C_3 n^{-1}(\log n)^3  & \text{ if } p=1,\\
C_3 n^{-1/p} (\log n)^{p+1} &\text{ if } p\ge 2.
\end{cases}
\end{align*}
\end{lmm}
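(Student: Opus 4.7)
The plan is to use the layer-cake representation
\[
\ee(\min\{\|\bbx_1 - \bbx_{n,1}\|,1\}) = \int_0^1 \pp(\|\bbx_1 - \bbx_{n,1}\| > r)\,dr
\]
and bound the integrand uniformly in $r$. Conditioning on $\bbx_1$ and using the independence of $\bbx_2,\ldots,\bbx_n$, the integrand equals $\ee[(1 - \nu(B(\bbx_1, r)))^{n-1}]$, where $\nu$ denotes the law of $\bbx_1$ and $B(\bx,r)$ is the closed Euclidean ball of radius $r$ centered at $\bx$.

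I would then introduce a truncation radius $R := (\alpha/C_2)\log n$ with $\alpha$ sufficiently large (say $\alpha = 2p+2$). By assumption (A2), $\pp(\|\bbx_1\| > R) \le C_1 n^{-\alpha}$, which is negligible compared to the target rate. On the complementary event $\{\|\bbx_1\| \le R\}$, I cover $B(0,R)$ by cubes $C_1,\ldots,C_N$ of side length $r/\sqrt{p}$, so each has diameter at most $r$ and $N \le C(p)(R/r)^p$. For $\bx \in C_i$ we have $B(\bx,r) \supseteq C_i$ and hence $\nu(B(\bx,r)) \ge \nu(C_i)$; summing over the cubes and using the elementary inequality $t(1-t)^{n-1} \le 1/(en)$ valid for $t\in[0,1]$ yields
\[
\int_{\{\|\bx\|\le R\}} (1 - \nu(B(\bx,r)))^{n-1}\,d\nu(\bx) \le \sum_{i=1}^N \nu(C_i)(1-\nu(C_i))^{n-1} \le \frac{N}{en} \le \frac{C(p) R^p}{r^p n}.
\]
Combined with the trivial bound of $1$, this produces
\[
\pp(\|\bbx_1 - \bbx_{n,1}\| > r) \le C_1 n^{-\alpha} + \min\Bigl(1,\, \tfrac{C(p) R^p}{r^p n}\Bigr).
\]

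The final step is to integrate this over $r \in [0,1]$, splitting at the transition radius $r^* := (C(p)R^p/n)^{1/p}$. On $[0,r^*]$ the trivial bound of $1$ contributes $r^* \asymp R/n^{1/p}$; on $[r^*,1]$ the integral $\int_{r^*}^1 C(p)R^p/(r^p n)\,dr$ produces another term of order $R/n^{1/p}$ when $p \ge 2$, and a term of order $(R/n)\log(1/r^*) \asymp (R\log n)/n$ when $p = 1$ (where $\int r^{-1}\,dr$ contributes the extra logarithm). Substituting $R \asymp \log n$ yields a total bound of order $(\log n)^2/n$ for $p = 1$ and $(\log n)/n^{1/p}$ for $p \ge 2$, both comfortably absorbed by the claimed rates. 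The constituent pieces are all routine; the only care required is in tracking the logarithmic factors, handling the integrability boundary case $p = 1$ separately from $p \ge 2$, and choosing $\alpha$ large enough that the truncation contribution from $\{\|\bbx_1\| > R\}$ is genuinely negligible. The covering count $N \le C(p)(R/r)^p$ and the unimodal bound on $t(1-t)^{n-1}$ are both standard.
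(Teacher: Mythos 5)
Your argument is correct and reaches the stated bound (indeed a slightly stronger one: roughly $(\log n)/n^{1/p}$ for $p\ge2$ and $(\log n)^2/n$ for $p=1$), but it proceeds a bit differently from the paper's proof at the key estimate. You use a fixed truncation radius $R\asymp\log n$, cover $B(0,R)$ by diameter-$r$ cells, and bound $\sum_i \nu(C_i)(1-\nu(C_i))^{n-1}$ directly by $N\cdot\max_{t\in[0,1]} t(1-t)^{n-1} = O(N/n)$. The paper instead takes a truncation radius $t = K\log(n\ve^p)$ that varies with the cell size $\ve$, and splits the cells by a mass threshold $\delta\asymp n^{-1}\log n$: cells with $\nu$-mass $<\delta$ contribute at most (number of cells)$\times\delta$, and cells with mass $\ge\delta$ contribute $(1-\delta)^{n-1}$. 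Your single-variable maximization replaces the threshold bookkeeping and avoids the extra $\log n$ from $\delta$, which is why your log power comes out smaller; the paper's approach is more conservative but parallels the analogous step elsewhere in the paper. One small slip: the inequality $t(1-t)^{n-1}\le 1/(en)$ on $[0,1]$ is false (the maximum is $(1/n)(1-1/n)^{n-1}\ge 1/(en)$); the correct bound is $t(1-t)^{n-1}\le 1/n$, which is what your argument actually needs and does not change anything downstream. Also note the clash between your cube labels $C_1,\ldots,C_N$ and the constants $C_1,C_2$ from assumption (A2) — harmless but worth renaming.
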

\begin{proof}
Throughout this proof, $C$ will denote any constant that depends only on $C_1$, $C_2$ and $p$. 
Take any $t > 0$ and $\ve\in (n^{-1/p},1)$. Let $B$ be the ball of radius $t$ in $\rr^p$ centered at the origin. Partition $B$ into at most $Ct^p \ve^{-p}$ small sets of diameter $\le \ve$. Let $S$ be the small set containing $\bbx_1$. Then
\begin{align*}
\pp(\|\bbx_1-\bbx_{n,1}\|\ge \ve) &\le \pp(\bbx_1\notin B) + \pp(\bbx_2\notin S,\ldots,\bbx_n\notin S).
\end{align*}
Now note that
\begin{align*}
\pp(\bbx_2\notin S,\ldots,\bbx_n\notin S|\bbx_1)&= (1-\pp(\bbx_2\in S|\bbx_1))^{n-1} = (1-\nu(S))^{n-1},
\end{align*}
where $\nu$ is the law of $\bbx$. Let $A$ be the collection of all small sets with $\nu$-mass less than $\delta$. Since there are at most $Ct^p \ve^{-p}$ small sets, we get
\begin{align*}
\ee[(1-\nu(S))^{n-1}] &\le (1-\delta)^{n-1} + \pp(\bbx_1\in A)\\
&\le (1-\delta)^{n-1} + Ct^p \ve^{-p} \delta.
\end{align*}
Since $\pp(\bbx_1\notin B)\le C_1e^{-C_2t}$, this gives 
\begin{align*}
\pp(\|\bbx_1-\bbx_{n,1}\|\ge \ve) &\le C_1e^{-C_2t} + (1-\delta)^{n-1} + Ct^p \ve^{-p} \delta.
\end{align*}
Now choosing $\delta = Kn^{-1}\log n$ and $t = K \log( n\ve^p)$ for some large enough $K$, we get
\begin{align*}
\pp(\|\bbx_1-\bbx_{n,1}\|\ge \ve) &\le \frac{C (\log n)^{p+1}}{n\ve^p}. 
\end{align*}
Thus, 
\begin{align*}
\ee(\min\{\|\bbx_1-\bbx_{n,1}\|, 1\}) &= n^{-1/p}+\int_{n^{-1/p}}^1 \pp(\|\bbx_1-\bbx_{n,1}\|\ge \ve) d\ve\\
&\le  n^{-1/p} + \frac{C(\log n)^{p+1}}{n} \int_{n^{-1/p}}^1 \ve^{-p} d\ve.
\end{align*}
This is bounded by $Cn^{-1}(\log n)^3$ if $p=1$, and $Cn^{-1/p}(\log n)^{p+1}$ if $p\ge 2$.
\end{proof}

In the next lemma, let $Q=Q(Y,\bbx)$ be defined as in equation \eqref{qdef} and $Q_n = Q_n(Y,\bbx)$ be defined as in equation \eqref{qndef2}. 
\begin{lmm}\label{qqnlmm0}
Let $C$ and $\beta$ be as in assumption \textup{(A1)} and $C_1$ and $C_2$ be as in assumption \textup{(A2)}. Then there are $K_1$, $K_2$ and $K_3$ depending only on $C$, $\beta$, $C_1$, $C_2$ and $p$ such that for any $t\ge 0$,
\[
\pp(|Q_n-Q|\ge K_1 n^{-\min\{1/p, 1/2\}}(\log n)^{p+\beta+1} + t ) \le K_2e^{-K_3nt^2}.
\]
\end{lmm}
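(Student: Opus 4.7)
The plan is to decompose $|Q_n - Q|\le |Q_n - \ee(Q_n)| + |\ee(Q_n) - Q|$ and handle the two parts separately. Lemma~\ref{concthm} already gives a subgaussian tail bound $\pp(|Q_n - \ee(Q_n)|\ge t)\le C_1e^{-C_2nt^2}$, so the work reduces to showing that the bias $|\ee(Q_n) - Q|$ is at most $K n^{-\min\{1/p,1/2\}}(\log n)^{p+\beta+1}$ for a suitable constant $K$. Once this is in hand, the desired inequality follows by taking $K_1\ge K$, since the event $\{|Q_n - Q|\ge K_1 n^{-\min\{1/p,1/2\}}(\log n)^{p+\beta+1} + t\}$ is then contained in $\{|Q_n - \ee(Q_n)|\ge t\}$.

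To estimate the bias, I introduce the proxy $Q_n'$ from \eqref{qnpdef}. From \eqref{qnqn} and the DKW inequality one has $\ee|Q_n - Q_n'|\le 3\ee\Delta_n = O(n^{-1/2})$. The calculation in the proof of Lemma~\ref{expthm} (before passing to the limit) identifies
\[
\ee(Q_n') - Q = \int \ee\bigl[G_{\bbx_1}(t)\bigl(G_{\bbx_{n,1}}(t) - G_{\bbx_1}(t)\bigr)\bigr]\,d\mu(t),
\]
and since $G_{\bbx_1}(t)\in[0,1]$ the absolute bias is at most $\ee\int|G_{\bbx_{n,1}}(t) - G_{\bbx_1}(t)|\,d\mu(t)$.

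To bound this expectation, I combine assumption (A1), which gives the pointwise Lipschitz estimate $|G_\bx(t) - G_{\bx'}(t)|\le C(1+\|\bx\|^\beta+\|\bx'\|^\beta)\|\bx-\bx'\|$, with assumption (A2) via a truncation at radius $R = K\log n$. Set $E := \{\|\bbx_1\|\le R,\ \|\bbx_{n,1}\|\le R\}$. On $E$ the Lipschitz constant is at most $O((\log n)^\beta)$; combining with the trivial bound $|G_\bx(t) - G_{\bx'}(t)|\le 1$ gives $|G_{\bbx_{n,1}}(t) - G_{\bbx_1}(t)|\le C''(\log n)^\beta\min(1,\|\bbx_1-\bbx_{n,1}\|)$ on $E$. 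Integrating in $t$ against $\mu$ and applying Lemma~\ref{nndistlmm0} then contributes $O(n^{-1/p}(\log n)^{p+\beta+1})$ when $p\ge 2$ and $O(n^{-1}(\log n)^{\beta+3})$ when $p=1$. On $E^c$, assumption (A2) combined with Lemma~\ref{cplmm} gives that $\pp(E^c)$ is exponentially small in $R$, which can be made $O(n^{-M})$ for any fixed $M$ by choosing $K$ large enough, so this contribution is negligible.

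The main technical nuisance is that the Lipschitz constant in (A1) grows polynomially in $\|\bbx\|$, so the naive expectation $\ee[(1+\|\bbx_1\|^\beta+\|\bbx_{n,1}\|^\beta)\|\bbx_1 - \bbx_{n,1}\|]$ couples the polynomial factor with the nearest-neighbor distance, whereas Lemma~\ref{nndistlmm0} is stated only for $\ee\min(1,\|\bbx_1 - \bbx_{n,1}\|)$. Truncating at the logarithmic radius $R = K\log n$ decouples these factors at the cost of an extra $(\log n)^\beta$, which is precisely the logarithmic power appearing in the final bound. Assembling the two bias contributions, the total bias is $O(n^{-1/2}) + O(n^{-1/p}(\log n)^{p+\beta+1})$ (with the $n^{-1/2}$ term dominating the $p=1$ nearest-neighbor contribution and being dominated by the nearest-neighbor contribution for $p\ge 2$), which is bounded by a constant multiple of $n^{-\min\{1/p,1/2\}}(\log n)^{p+\beta+1}$; combining with Lemma~\ref{concthm} and absorbing constants into $K_2$ and $K_3$ yields the claim.
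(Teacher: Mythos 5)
Your proposal is correct and follows essentially the same route as the paper's own proof: decompose into fluctuation and bias, invoke Lemma~\ref{concthm} for the former, pass to $Q_n'$ via the DKW inequality, and bound $|\ee(Q_n') - Q|$ by a truncation of (A1) at radius $O(\log n)$ together with Lemma~\ref{nndistlmm0} and the tail control of $\|\bbx_{n,1}\|$ from (A2) and Lemma~\ref{cplmm}. The only differences are cosmetic (you label the ``good'' truncation event $E$ where the paper uses $E^c$, and you spell out the intermediate identity $\ee(Q_n') - Q = \int \ee[G_{\bbx_1}(t)(G_{\bbx_{n,1}}(t)-G_{\bbx_1}(t))]\,d\mu(t)$ rather than going straight to the absolute bound), so no further comment is needed.
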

\begin{proof}
Let $Q_n'$ and $\Delta_n$ be as in the proof of Lemma~\ref{expthm}. By the Dvoretzky--Kiefer--Wolfowitz inequality~\cite{dkw56, massart90}, we know that for any $x\ge0$,
\begin{align*}
\pp(\sqrt{n}\Delta_n \ge x) \le 2e^{-2x^2}. 
\end{align*}
From this it follows that $\ee(\Delta_n)\le n^{-1/2}$, and therefore by \eqref{qnqn},
\begin{align}\label{qnqnprime0}
\ee|Q_n'-Q_n|\le 3n^{-1/2}. 
\end{align} 
Arguing as in the proof of Lemma \ref{expthm}, we get 
\begin{align*}
\ee(Q_n') &= \int (\ee(G_{\bbx_1}(t) G_{\bbx_{n,1}}(t)) -G(t)^2) d\mu(t). 
\end{align*}
On the other hand, 
\begin{align*}
Q = \int (\ee(G_{\bbx}(t)^2)-G(t)^2) d\mu(t). 
\end{align*}
Since $G_\bx(t)\in [0,1]$ for all $\bx$ and $t$, this gives 
\begin{align*}
|\ee(Q_n')-Q| &\le \int \ee|G_{\bbx_1}(t) - G_{\bbx_{n,1}}(t)| d\mu(t).
\end{align*}
Now note that by assumption (A1),
\begin{align*}
|G_{\bbx_{n,1}} (t) - G_{\bbx_{1}} (t)|  &\le C(1+\|\bbx_{n,1}\|^\beta + \|\bbx_1\|^\beta) \|\bbx_1-\bbx_{n,1}\|.
\end{align*}
Next, note that by assumption (A2), $\pp(\|\bbx_1\|\ge t)\le C_1e^{-C_2t}$. Therefore by Lemma~\ref{cplmm}, the law of $\|\bbx_{n,1}\|$ also has an exponentially decaying tail. Lastly, note that $|G_{\bbx_{n,1}} (t) - G_{\bbx_{1}} (t)|\le 1$. So, letting $E$ be the event that the maximum of $\|\bbx_1\|$ and $\|\bbx_{n,1}\|$ is bigger than $K\log n$ for some suitably large $K$, we get
\begin{align*}
\ee|G_{\bbx_{n,1}} (t) - G_{\bbx_{1}} (t)| &\le \pp(E) + \ee(|G_{\bbx_{n,1}} (t) - G_{\bbx_{1}} (t)|1_{E^c})\\
&\le n^{-1} + L (\log n)^\beta \ee(\min\{\|\bbx_1-\bbx_{n,1}\|,1\})
\end{align*}
for some large constant $L$. It is now easy to complete the proof using Lemma~\ref{nndistlmm0}, inequality \eqref{qnqnprime0}, and  Lemma \ref{concthm}.
\end{proof} 

We are now ready to prove Theorem \ref{ratethm}.
\begin{proof}[Proof of Theorem \ref{ratethm}]
Recall from Section \ref{mainpfsec} that 
\[
T_n(Y,\bbz|\bbx) = \frac{Q_n(Y, \bbz|\bbx)}{S_n(Y,\bbx)},
\]
and 
\[
T(Y,\bbz|\bbx) = \frac{Q(Y, \bbz|\bbx)}{S(Y,\bbx)},
\]
where the quantity $Q(Y,\bbz|\bbx)$ is defined in Lemma \ref{expthmcond} and $S(Y,\bbx)$ is defined in Lemma~\ref{expthmcondS}. 
Now, as we observed in the proof of Lemma \ref{expthmcond}, 
\[
Q_n(Y, \bbz|\bbx) = Q_n(Y,\bbw) - Q_n(Y,\bbx),
\]
where $\bbw = (\bbx, \bbz)$. Therefore by Lemma \ref{qqnlmm0}, 
\[
Q_n(Y, \bbz|\bbx)  - Q(Y, \bbz|\bbx) = O_P\biggl(\frac{(\log n)^{p+q+\beta+1}}{n^{1/(p+q)}}\biggr). 
\]
By an exactly similar argument, 
\[
S_n(Y, \bbx)  - S(Y, \bbx) = O_P\biggl(\frac{(\log n)^{p+\beta+1}}{n^{\min\{1/p,1/2\}}}\biggr). 
\]
Finally, by part (iv) of Theorem \ref{mainthm}, $S(Y,\bbx)\ne 0$. The proof is completed by combining these observations. 
\end{proof}

\section{Proof of Proposition \ref{conditionprop}}
Take a bounded open ball $B$  in $\rr^p$ and let $K$ be the closure of $B$. Let $g(y) := \max_{\bx\in K}f(y|\bx)$. By assumption, $g(y)$ is bounded and decays faster than any negative power of $|y|$ as $|y|\to\infty$. Also, since $K$ is bounded, the assumption on the derivatives of $\log f(y|\bx)$ implies that 
\[
h(y) := \max_{\bx\in K} \biggl|\fpar{}{x_i}\log f(y|\bx)\biggr|
\]
is bounded above by a polynomial in $|y|$. Thus, for $\bx\in K$ and $y\in \rr$,
\begin{align*}
\biggl|\fpar{}{x_i} f(y|\bx)\biggr| &= \biggl|f(y|\bx) \fpar{}{x_i}\log f(y|\bx)\biggr|\\
&\le g(y)h(y),
\end{align*}
and $g(y)h(y)$ is an integrable function of $y$. This allows us to apply the dominated convergence theorem and conclude that for any $\bx\in B$ (and hence any $\bx\in \rr^p$), 
\begin{align*}
\biggl|\fpar{}{x_i} \pp(Y\ge t|\bbx=\bx)\biggr| &= \biggl|\fpar{}{x_i}\int_t^\infty f(y|\bx) dy\biggr|\\
&= \biggl|\int_t^\infty \fpar{}{x_i}f(y|\bx) dy\biggr|\\
&\le \int_{-\infty}^\infty \biggl|\fpar{}{x_i}\log f(y|\bx)\biggr| f(y|\bx)dy.
\end{align*}
Now applying the assumption about the derivatives of $\log f(y|\bx)$, and the condition that $\ee(Y^{2k}|\bbx=\bx)$ is bounded by a polynomial in $\|\bx\|$ for any $k$, it follows easily that 
\[
\biggl|\fpar{}{x_i} \pp(Y\ge t|\bbx=\bx)\biggr|
\]
is bounded above by a polynomial in $\|\bx\|$. The second inequality in (A1) follows directly from this.

\section{Proof of Theorem \ref{selectthm}}
Let $j_1,j_2,\ldots,j_p$ be the complete ordering of all variables produced by the stepwise algorithm in FOCI. Let $S_0:=\emptyset$, and for each $1\le k\le p$, let $S_k := \{j_1,\ldots, j_k\}$. For  $k> p$, let $S_k := S_p$. For any subset $S$, let $Q(Y,\bbx_S)$ be defined as in \eqref{qdef} and let $Q_n(Y,\bbx_S)$ be defined as in~\eqref{qndef2}. Notice that $Q(Y,\bbx_S)$ is the same as the quantity $Q(S)$ defined in~\eqref{qsdef}. Define these quantities to be zero if $S=\emptyset$. Let $K$ be the integer part of $1/\delta+2$. Let $E'$ be the event that $|Q_n(Y, \bbx_{S_k})-Q(Y,\bbx_{S_k})|\le \delta/8$ for all $1\le k\le K$, and let $E$ be the event that $S_K$ is sufficient. 
\begin{lmm}\label{newlmm1}
Suppose that $E'$ has happened, and also that 
\begin{equation}\label{qncond}
Q_n(Y, \bbx_{S_k})-Q_n(Y, \bbx_{S_{k-1}}) \le \frac{\delta}{2}
\end{equation}
for some $1\le k\le K$. Then $S_{k-1}$ is sufficient.
\end{lmm}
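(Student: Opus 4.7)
The plan is to argue by contradiction: assume that $S_{k-1}$ is insufficient and derive that $Q_n(Y, \bbx_{S_k}) - Q_n(Y, \bbx_{S_{k-1}})$ must strictly exceed $\delta/2$, contradicting \eqref{qncond}. By the defining property of $\delta$, the insufficiency of $S_{k-1}$ yields some $j^* \notin S_{k-1}$ with
\[
Q(Y, \bbx_{S_{k-1} \cup \{j^*\}}) - Q(Y, \bbx_{S_{k-1}}) \ge \delta.
\]

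Next I would exploit the greedy rule of FOCI. At step $k$, the index $j_k$ is chosen to maximize $T_n(Y, X_j \mid \bbx_{S_{k-1}})$ over $j \notin S_{k-1}$. Since the denominator $S_n(Y, \bbx_{S_{k-1}})$ of $T_n$ does not depend on $j$, this is equivalent to maximizing $Q_n(Y, X_j \mid \bbx_{S_{k-1}})$; and using the identity $Q_n(Y, X_j \mid \bbx_{S_{k-1}}) = Q_n(Y, \bbx_{S_{k-1} \cup \{j\}}) - Q_n(Y, \bbx_{S_{k-1}})$ noted in the proof of Lemma~\ref{expthmcond}, where the second term again does not depend on $j$, this is equivalent to maximizing $Q_n(Y, \bbx_{S_{k-1} \cup \{j\}})$. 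Using $j^*$ as a competitor gives
\[
Q_n(Y, \bbx_{S_k}) \ge Q_n(Y, \bbx_{S_{k-1} \cup \{j^*\}}).
\]

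Finally, applying the $\delta/8$ approximation provided by $E'$ at both $S_{k-1}$ and $S_{k-1} \cup \{j^*\}$, I would chain the three inequalities to obtain
\[
Q_n(Y, \bbx_{S_k}) - Q_n(Y, \bbx_{S_{k-1}}) \ge Q(Y, \bbx_{S_{k-1} \cup \{j^*\}}) - Q(Y, \bbx_{S_{k-1}}) - \tfrac{\delta}{4} \ge \tfrac{3\delta}{4} > \tfrac{\delta}{2},
\]
contradicting \eqref{qncond}. Apart from this short chain of inequalities, the main conceptual obstacle lies in the second application of $E'$: as literally stated, $E'$ only controls $|Q_n - Q|$ along the random FOCI path, whereas $S_{k-1} \cup \{j^*\}$ is generally an off-path subset. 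To make the argument go through, one has to read $E'$ as providing uniform control of $|Q_n(Y, \bbx_S) - Q(Y, \bbx_S)|$ over \emph{every} subset $S$ of size at most $K$ (or at least at most $1/\delta + 2$); the union bound needed for this will show up as the polynomial factor $p^{L_2}$ in Theorem~\ref{selectthm}.
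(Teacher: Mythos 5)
Your proposal is correct and is essentially the contrapositive of the paper's own proof. The paper argues directly: using the greedy rule, the $E'$ approximation at both $S_{k-1}$ and $S_{k-1}\cup\{j\}$, and \eqref{qncond}, it shows that for \emph{every} $j\notin S_{k-1}$,
\[
Q(Y,\bbx_{S_{k-1}\cup\{j\}}) - Q(Y,\bbx_{S_{k-1}}) \le \tfrac{3\delta}{4} < \delta,
\]
so by the definition of $\delta$, $S_{k-1}$ must be sufficient. You instead assume insufficiency, obtain a witness $j^*$ with increment $\ge\delta$, and derive a contradiction by chaining the same three facts. The inequalities and constants are identical; only the logical orientation differs.

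Your observation about $E'$ is sharp and deserves emphasis. As literally defined in the paper, $E'$ controls $|Q_n - Q|$ only along the random FOCI path $S_1,\dots,S_K$, yet \emph{both} your proof and the paper's proof require this control at the off-path set $S_{k-1}\cup\{j\}$. The paper's proof of Lemma~\ref{elmm2} quietly resolves this: the event it actually bounds is a union $\bigcup_{|S|\le K} A_{S,t}$ over \emph{all} subsets of size $\le K$, not just the on-path ones, which is where the $p^{K}$ factor (and hence the $p^{L_2}$ in Theorem~\ref{selectthm}) comes from. So the uniform reading of $E'$ you propose is exactly what the paper's argument requires and what Lemma~\ref{elmm2} delivers; the definition of $E'$ in the text is simply stated more narrowly than it is used.
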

\begin{proof}
Take any $k\le K$ such that \eqref{qncond} holds. If $k>p$ there is nothing to prove. So let us assume that $k\le p$. An examination of the formula for $T_n$ shows that for each $k$, $j_k$ is the index $j$ that maximizes $Q_n(Y, \bbx_{S_{k-1}\cup\{j\}})$ among all $j\notin S_{k-1}$.  Since $E'$ has happened, this implies that for any $j\notin S_{k-1}$, 
\begin{align*}
Q(Y, \bbx_{S_{k-1}\cup\{j\}}) - Q(Y, \bbx_{S_{k-1}}) &\le Q_n(Y, \bbx_{S_{k-1}\cup\{j\}})-Q_n(Y, \bbx_{S_{k-1}}) + \frac{\delta}{4}\\
&\le Q_n(Y, \bbx_{S_k})-Q_n(Y, \bbx_{S_{k-1}}) + \frac{\delta}{4}\\
&\le \frac{3\delta}{4}.
\end{align*}
Therefore since $\delta>0$, the definition of $\delta$ implies that $S_{k-1}$ must be a sufficient subset of predictors.
\end{proof}
\begin{lmm}\label{elmm}
 The event $E'$ implies $E$.
\end{lmm}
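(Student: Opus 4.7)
The plan is proof by contradiction. Assume $E'$ holds but $S_K$ is insufficient, and derive a contradiction by comparing a lower bound on $Q_n(Y,\bbx_{S_K})$ obtained from Lemma \ref{newlmm1} against a universal upper bound.

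First I would record the standard fact that sufficiency is \emph{upward-closed}: if $S\subseteq S'$ and $Y\perp \bbx_{S^c}\mid \bbx_S$, then $Y\perp \bbx_{S'^c}\mid \bbx_{S'}$ (apply the weak-union property of conditional independence to the decomposition $\bbx_{S^c}=\bbx_{S'\setminus S}\cup\bbx_{S'^c}$). Consequently, if $S_K$ is insufficient, then every $S_{k-1}$ with $1\le k\le K$ is also insufficient, since $S_{k-1}\subseteq S_K$.

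Next I would invoke the contrapositive of Lemma \ref{newlmm1}: on $E'$, insufficiency of $S_{k-1}$ forces $Q_n(Y,\bbx_{S_k})-Q_n(Y,\bbx_{S_{k-1}})>\delta/2$ for each $1\le k\le K$. Summing telescopically, and using the convention $Q_n(Y,\bbx_{S_0})=0$, this gives
\[
Q_n(Y,\bbx_{S_K}) > \frac{K\delta}{2} \;\ge\; \Bigl(\frac{1}{\delta}+1\Bigr)\frac{\delta}{2} \;=\; \frac{1}{2}+\frac{\delta}{2},
\]
where the second inequality uses $K=\lfloor 1/\delta+2\rfloor\ge 1/\delta+1$.

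For a matching upper bound I would use the elementary fact that $\var(X)\le \ee(X)(1-\ee(X))$ for any $[0,1]$-valued $X$ (since $X^2\le X$), applied to $X=G_{\bbx_{S_K}}(t)$; this yields
\[
Q(Y,\bbx_{S_K}) \;\le\; \int G(t)\bigl(1-G(t)\bigr)\,d\mu(t) \;\le\; \tfrac{1}{4}.
\]
On $E'$ we then have $Q_n(Y,\bbx_{S_K})\le Q(Y,\bbx_{S_K})+\delta/8\le 1/4+\delta/8$, which contradicts the previous lower bound $1/2+\delta/2$ (giving $1/4<-3\delta/8$), since $\delta>0$. There is no genuine obstacle here; all the real content has been deposited into Lemma \ref{newlmm1}, and this lemma simply books the mandatory $\delta/2$ progress per step for $K\ge 1/\delta+1$ steps and observes that such growth overshoots the universal ceiling $1/4$ on $Q$.
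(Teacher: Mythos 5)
Your proof is correct and follows essentially the same route as the paper's: telescope the per-step $\delta/2$ progress guaranteed by (the contrapositive of) Lemma \ref{newlmm1} over $K$ steps, and contradict the universal ceiling $Q\le 1/4$. The only cosmetic differences are that you convert from $Q_n$ to $Q$ once at the end rather than once per step, and that you invoke upward-closedness of sufficiency explicitly at the outset (to propagate insufficiency from $S_K$ down to every $S_{k-1}$), whereas the paper relies on the same fact implicitly at its final step when passing from ``$S_{k-1}$ sufficient for some $k$'' to ``$S_K$ sufficient.''
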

\begin{proof}
Suppose that $E'$ has happened. Suppose also that \eqref{qncond} is violated for every $1\le k\le K$. Since $E'$ has happened, this implies that for each  $k\le K$, 
\begin{align*}
Q(Y, \bbx_{S_k}) - Q(Y, \bbx_{S_{k-1}}) &\ge Q_n(Y, \bbx_{S_k})-Q_n(Y, \bbx_{S_{k-1}}) - \frac{\delta}{4}\\
&\ge \frac{\delta}{4}.
\end{align*}
This gives 
\begin{align*}
Q(Y, \bbx_{S_K}) &= \sum_{k=1}^K (Q(Y, \bbx_{S_k}) - Q(Y, \bbx_{S_{k-1}})) \\
&\ge \frac{K\delta}{4}\ge \biggl(\frac{1}{\delta}+1\biggr)\frac{\delta}{4} > \frac{1}{4}.
\end{align*}
But the variance of any  $[0,1]$-valued random variable is bounded by $1/4$, which implies that $1/4$ is the maximum possible value of the statistic $Q$. This yields a contradiction, proving that \eqref{qncond} must hold for some $k\le K$. Therefore by Lemma \ref{newlmm1}, $S_K$ is sufficient.
\end{proof}

\begin{lmm}\label{elmm2}
There are positive constants $L_1$, $L_2$ and $L_3$ depending only on  $C$, $\beta$, $C_1$, $C_2$ and $K$, such that 
\[
\pp(E')\ge 1- L_1p^{L_2}e^{-L_3n}. 
\]
\end{lmm}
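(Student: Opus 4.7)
The plan is to reduce the event $E'$---which involves the \emph{random} subsets $S_1,\ldots,S_K$ selected by FOCI---to a uniform statement about all \emph{fixed} subsets of $\{1,\ldots,p\}$ of size at most $K$, and then apply Lemma \ref{qqnlmm0} together with a union bound. Concretely, observe that on the event
\[
F := \Bigl\{ \bigl|Q_n(Y,\bbx_S) - Q(Y,\bbx_S)\bigr| \le \tfrac{\delta}{8} \text{ for every } S \subseteq \{1,\ldots,p\} \text{ with } |S|\le K\Bigr\},
\]
the event $E'$ automatically holds, because each $S_k$ (for $1\le k\le K$) is a subset of size $k\le K$. Thus it suffices to produce a lower bound of the required form for $\pp(F)$.

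For a fixed subset $S$ with $1\le |S|\le K$, assumptions (A1$'$) and (A2$'$) supply exactly the hypotheses of (A1) and (A2) for the random vector $\bbx_S$ and the conditional law of $Y$ given $\bbx_S$. Hence Lemma \ref{qqnlmm0}, applied with dimension $|S|\le K$, yields constants $K_1,K_2,K_3$ depending only on $C,\beta,C_1,C_2,K$ (we bound $|S|$ by $K$ throughout) such that for every $t\ge 0$,
\[
\pp\bigl(|Q_n(Y,\bbx_S)-Q(Y,\bbx_S)| \ge K_1 n^{-1/K}(\log n)^{K+\beta+1} + t\bigr) \le K_2 e^{-K_3 n t^2}.
\]
Choose $N_0$ depending only on $K_1,K,\beta,\delta$ so that for every $n\ge N_0$ the bias term $K_1 n^{-1/K}(\log n)^{K+\beta+1}$ is at most $\delta/16$. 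Taking $t=\delta/16$ gives
\[
\pp\bigl(|Q_n(Y,\bbx_S)-Q(Y,\bbx_S)| \ge \tfrac{\delta}{8}\bigr) \le K_2 e^{-K_3 n \delta^2/256}
\]
for every $n\ge N_0$ and every $S$ of size between $1$ and $K$. (The case $|S|=0$ is vacuous since $Q(Y,\bbx_\emptyset)=Q_n(Y,\bbx_\emptyset)=0$.)

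Now union bound over all such subsets. The number of subsets of $\{1,\ldots,p\}$ of size at most $K$ is at most $(K+1)p^K$, so for $n\ge N_0$,
\[
\pp(F^c) \le (K+1)\,p^K\, K_2\, e^{-K_3 n \delta^2/256}.
\]
Setting $L_2 := K$, $L_3 := K_3\delta^2/256$, and $L_1$ large enough that both $(K+1)K_2 \le L_1$ and the bound $L_1 p^{L_2} e^{-L_3 n}\ge 1$ holds for all $n<N_0$ (so the claim is trivial in that small-$n$ regime), we obtain $\pp(E')\ge \pp(F)\ge 1-L_1 p^{L_2} e^{-L_3 n}$ for all $n$.

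The only nontrivial point is bookkeeping around the deterministic bias term $K_1 n^{-1/K}(\log n)^{K+\beta+1}$ appearing in Lemma \ref{qqnlmm0}: it must be made strictly smaller than $\delta/8$ so that a genuine exponential concentration gap of order $\delta^2$ survives, which is what forces the $\delta$-dependence of the constants $L_1,L_2,L_3$ and limits the proof to a fixed-$\delta$ regime. Absorbing the small-$n$ case into $L_1$ is a standard trick and the combinatorial factor $p^K$ is the source of the $p^{L_2}$ in the statement.
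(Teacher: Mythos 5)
Your proof is correct and follows essentially the same route as the paper's: apply Lemma \ref{qqnlmm0} uniformly over all fixed subsets of size at most $K$ (valid since (A1$'$)--(A2$'$) supply uniform constants), take $t=\delta/16$, union bound over the at most $(K+1)p^K$ such subsets, and absorb the finitely many small-$n$ cases into $L_1$. The only cosmetic difference is that you explicitly introduce the auxiliary event $F$ to pass from the random FOCI-selected subsets $S_1,\ldots,S_K$ to fixed subsets, a step the paper carries out implicitly when it asserts that the union bound over $\{S:|S|\le K\}$ controls $\pp(E')$.
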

\begin{proof}
Throughout this proof, $L_1,L_2,\ldots$ will denote constants that depend only on $C$, $\beta$, $C_1$, $C_2$ and $K$. By assumptions (A1$'$) and (A2$'$), and Lemma~\ref{qqnlmm0}, there exist $L_1$, $L_2$ and $L_3$ such that for any $S$ of size $\le K$ and any $t\ge 0$, 
\begin{align*}
&\pp(|Q_n(Y, \bbx_S) - Q(Y,\bbx_S)|\ge L_1 n^{-\min\{1/K, 1/2\}}(\log n)^{K+\beta+1} + t) \\
&\le L_2e^{-L_3nt^2}. 
\end{align*}
Call the event on the left $A_{S,t}$. Let
\[
A_t := \bigcup_{|S|\le K} A_{S,t}.
\]
Then by a simple union bound,
\[
\pp(A_t)\le L_2p^K e^{-L_3nt^2}. 
\]
Now choose $t = \delta/16$. If $n$ is so large that 
\begin{equation}\label{nlower}
L_1 n^{-\min\{1/K, 1/2\}}(\log n)^{K+\beta+1}\le \frac{\delta}{16},
\end{equation} 
then the above bound implies that 
\begin{equation}\label{pebd}
\pp(E') \ge 1- L_2p^K e^{-L_4n}.
\end{equation}
Now, the condition \eqref{nlower} can be written as $n\ge L_5$. Choose a constant $L_6\ge L_2$ so large  that for any $n<L_5$,
\[
L_6p^K e^{-L_3n} \ge 1.
\]
Then if $n<L_5$, we have $\pp(E') \ge 1- L_6p^K e^{-L_3n}$. Combining with \eqref{pebd}, we see that this inequality holds without any constraint on $n$.
\end{proof}
\begin{lmm}\label{newlmm2}
The event $E'$ implies that $\hat{S}$ is sufficient.
\end{lmm}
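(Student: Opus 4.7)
The plan is a short case analysis on $\hat{k}:=|\hat{S}|$ that reduces Lemma~\ref{newlmm2} to the work already done in Lemmas~\ref{newlmm1} and~\ref{elmm}. First, I would recast FOCI's stopping rule in terms of $Q_n$: since the denominator of each conditional $T_n$ is a strictly positive $S_n$-type quantity, the condition $T_n(Y,X_{j_{k+1}}\mid X_{j_1},\ldots,X_{j_k})\le 0$ is equivalent to $Q_n(Y,\bbx_{S_{k+1}}) - Q_n(Y,\bbx_{S_k})\le 0$, where the convention $Q_n(Y,\bbx_{\emptyset}):=0$ folds the initial step $T_n(Y,X_{j_1})\le 0$ into the same framework. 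Thus, if FOCI stops at some step $\hat{k}<p$, then $Q_n(Y,\bbx_{S_{\hat{k}+1}}) - Q_n(Y,\bbx_{S_{\hat{k}}})\le 0$.

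Assume $E'$ holds and split on the size of $\hat{k}$. In the first case, $\hat{k}\ge K$. Then $S_K\subseteq \hat{S}$, and Lemma~\ref{elmm} gives that $S_K$ is sufficient. I would then invoke the elementary graphoid fact that supersets of sufficient sets are sufficient: if $Y$ and $\bbx_{S^c}$ are conditionally independent given $\bbx_S$ and $T\supseteq S$, write $S^c=T^c\cup(T\setminus S)$; conditional independence of $Y$ from $(\bbx_{T\setminus S},\bbx_{T^c})$ given $\bbx_S$ yields, after conditioning further on $\bbx_{T\setminus S}$, that $Y$ and $\bbx_{T^c}$ are conditionally independent given $\bbx_T$. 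Applying this to $S_K\subseteq \hat{S}$ gives sufficiency of $\hat{S}$.

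In the second case, $\hat{k}<K$. If $\hat{k}=p$, then $\hat{S}$ is the entire predictor set $\{1,\ldots,p\}$ and is trivially sufficient. Otherwise $\hat{k}<p$ means FOCI genuinely stopped at step $\hat{k}$, so the increment inequality above yields $Q_n(Y,\bbx_{S_{\hat{k}+1}}) - Q_n(Y,\bbx_{S_{\hat{k}}})\le 0\le \delta/2$, and since $\hat{k}+1\le K$, Lemma~\ref{newlmm1} applied with $k=\hat{k}+1$ gives sufficiency of $S_{\hat{k}}=\hat{S}$.

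No step looks genuinely hard here: all the analytic content lives in Lemmas~\ref{newlmm1} and~\ref{elmm}, and what is left is just book-keeping. The two small points requiring care are the edge cases $\hat{k}=0$ (handled by the convention $Q_n(Y,\bbx_\emptyset)=0$, so that Lemma~\ref{newlmm1} still concludes that $\emptyset$ is sufficient, meaning $Y$ is independent of $\bbx$) and $\hat{k}=p$ (handled by triviality), together with the one-line verification of the superset-of-sufficient-is-sufficient fact sketched above.
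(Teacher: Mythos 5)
Your proof is correct and follows essentially the same two-case split as the paper's own argument (Case $\hat k \ge K$ via Lemma \ref{elmm}, Case $\hat k < K$ via Lemma \ref{newlmm1} with $k=\hat k+1$). The only differences are cosmetic: you spell out the weak-union argument for why a superset of a sufficient set is sufficient, and you separate out the $\hat k=p$ and $\hat k=0$ edge cases, both of which the paper leaves implicit (the former is in fact already absorbed by the ``$k>p$'' clause in the proof of Lemma \ref{newlmm1}).
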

\begin{proof}
Suppose that $E'$ has happened. Consider two cases. First, suppose that FOCI has stopped at step $K$ or later. Then $S_K\subseteq \hat{S}$. By Lemma \ref{elmm}, $E$ has also happened, and hence $S_K$ is sufficient. Therefore in this case, $\hat{S}$ is sufficient. Next, suppose that FOCI has stopped at step $k-1< K$. Then by the definition of the stopping rule, we see that 
\[
Q_n(Y, \bbx_{S_k}) \le Q_n(Y, \bbx_{S_{k-1}}). 
\]
In particular, \eqref{qncond} holds. Since $E'$ has happened, Lemma \ref{newlmm1} now implies that $\hat{S} = S_{k-1}$ is sufficient.
 \end{proof}
It is clear that  Lemmas \ref{elmm2} and \ref{newlmm2} together imply Theorem \ref{selectthm}.


\section{Proof of Theorem \ref{deltacompare}}\label{gaussianproof}
We start with the following lemma about the variance of a certain kind of function of normal random variables. 
\begin{lmm}\label{normbdlmm}
Let $\Phi$ be the standard normal c.d.f.~and let $Z\sim N(0,1)$. There are positive constants $C_1$ and $C_2$ such that for any $a,b\in \rr$,
\[
C_1 b^2 e^{-(a^2+b^2)} \le \var(\Phi(a+bZ)) \le C_2 b^2. 
\]
\end{lmm}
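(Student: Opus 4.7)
The plan is to treat the upper and lower bounds separately, since they call for very different techniques: the upper bound is a one-line Poincar\'e estimate, while the lower bound relies on an exact integral representation coming from Plackett's identity.

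For the upper bound, I would invoke the Gaussian Poincar\'e inequality. Writing $f(z) := \Phi(a+bz)$, we have $f'(z) = b\phi(a+bz)$ and $\phi \le 1/\sqrt{2\pi}$, so $|f'(Z)|^2 \le b^2/(2\pi)$ pointwise. Hence
\[
\var(\Phi(a+bZ)) \le \ee[f'(Z)^2] \le \frac{b^2}{2\pi},
\]
which gives $C_2 = 1/(2\pi)$.

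For the lower bound, I would first express the first two moments of $\Phi(a+bZ)$ exactly. Let $W_1, W_2$ be i.i.d.\ $N(0,1)$ independent of $Z$. Since $\Phi(a+bZ) = \pp(W_1 \le a+bZ \mid Z)$, we get $\ee\Phi(a+bZ) = \pp(W_1 - bZ \le a) = \Phi(a/\sigma)$, with $\sigma := \sqrt{1+b^2}$, and similarly $\ee[\Phi(a+bZ)^2] = \Phi_\rho(a/\sigma, a/\sigma)$, where $\Phi_r$ denotes the standard bivariate normal c.d.f.\ with correlation $r$ and $\rho := b^2/(1+b^2)$. Therefore
\[
\var(\Phi(a+bZ)) = \Phi_\rho(a/\sigma, a/\sigma) - \Phi_0(a/\sigma, a/\sigma).
\]
I would then apply Plackett's identity $\partial_r \Phi_r(x,y) = \frac{1}{2\pi\sqrt{1-r^2}}\exp\!\bigl(-\frac{x^2 - 2rxy + y^2}{2(1-r^2)}\bigr)$ at $x=y=a/\sigma$ to obtain
\[
\var(\Phi(a+bZ)) = \int_0^\rho \frac{1}{2\pi\sqrt{1-t^2}}\exp\!\left(-\frac{a^2}{(1+b^2)(1+t)}\right) dt.
\]
On $[0,\rho]$ both $(1-t^2)^{-1/2}$ and the exponential factor are nondecreasing in $t$, so the integrand is bounded below by its value at $t=0$. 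Multiplying by the length of the interval yields
\[
\var(\Phi(a+bZ)) \ge \frac{b^2}{2\pi(1+b^2)}\, e^{-a^2/(1+b^2)}.
\]
To put this into the form $C_1 b^2 e^{-(a^2+b^2)}$, it suffices to check that $e^{a^2b^2/(1+b^2) + b^2} \ge 1+b^2$, which follows from nonnegativity of the first term in the exponent combined with the standard inequality $e^{b^2} \ge 1+b^2$. This gives the lower bound with $C_1 = 1/(2\pi)$.

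The main obstacle is the lower bound: direct coordinate-free variance estimates (Lipschitz comparisons, log-concavity, or bounded-difference arguments) do not capture the $e^{-a^2}$ decay that appears when $|a|$ is large, nor do they see why the degeneracy as $b \to 0$ has the right polynomial order. The Plackett integral representation is the ingredient that lets this tail behavior be read off cleanly; once it is in hand, monotonicity of the integrand and the elementary inequality $e^{b^2} \ge 1+b^2$ finish the argument.
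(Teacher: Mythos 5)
Your proof is correct, and it takes a genuinely different route from the paper's for the lower bound, which is the interesting part.

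For the upper bound, you invoke the Gaussian Poincar\'e inequality $\var(f(Z))\le \ee[f'(Z)^2]$; the paper instead writes $\var(\Phi(a+bZ))=\tfrac12\ee[(\Phi(a+bZ)-\Phi(a+bZ'))^2]$ with $Z'$ an independent copy and applies the mean value theorem with $\varphi\le 1/\sqrt{2\pi}$. These are essentially the same estimate in two dialects and give the same constant $1/(2\pi)$, so the choice is a matter of taste.

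For the lower bound, you identify $\ee\Phi(a+bZ)=\Phi(a/\sigma)$ and $\ee[\Phi(a+bZ)^2]=\Phi_\rho(a/\sigma,a/\sigma)$ with $\sigma=\sqrt{1+b^2}$ and $\rho=b^2/(1+b^2)$, then integrate Plackett's identity in the correlation parameter to get the \emph{exact} formula
\[
\var(\Phi(a+bZ))=\int_0^\rho \frac{1}{2\pi\sqrt{1-t^2}}\exp\!\left(-\frac{a^2}{(1+b^2)(1+t)}\right)dt,
\]
and bound the integrand by its value at $t=0$. The paper instead restricts to the high-probability event $\{|Z|\le 1,|Z'|\le 1\}$, where the intermediate point from the mean value theorem stays in $[a-b,a+b]$, and bounds $\varphi$ there by $\tfrac{1}{\sqrt{2\pi}}e^{-(a^2+b^2)}$, paying an unnamed constant from $\ee[(Z-Z')^2;|Z|\le1,|Z'|\le1]$. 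Your approach is sharper: it produces an explicit closed-form lower bound $\tfrac{b^2}{2\pi(1+b^2)}e^{-a^2/(1+b^2)}$, which actually dominates the stated target $C_1 b^2 e^{-(a^2+b^2)}$ (your reduction via $e^{b^2}\ge 1+b^2$ and $a^2b^2/(1+b^2)\ge 0$ is the right way to make this precise), and all constants are explicit. The paper's argument is more elementary and self-contained --- it needs only the mean value theorem and nothing about bivariate normal c.d.f.'s --- which is likely why they went that way, since only the qualitative form of the bound is used downstream in Lemma~\ref{gaussianlmm}. Your route would be the one to take if one later wanted tight constants or the exact dependence on $(a,b)$.

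One small nit: you should note that both $(1-t^2)^{-1/2}$ and the exponential factor are nondecreasing on $[0,\rho]$ \emph{because} $\rho<1$; you say this implicitly but it is worth a word since $\rho\to1$ as $|b|\to\infty$ and the first factor diverges at $t=1$. Since you only use the lower endpoint this causes no trouble, but it's worth flagging so the reader doesn't worry about integrability.
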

\begin{proof}
Since $Z$ has the same law as $-Z$, and $\Phi(-x)=1-\Phi(x)$ for all $x$, it is easy to see that there is no loss of generality in assuming that $a$ and $b$ are nonnegative. Moreover, since the result is trivial when $b=0$, let us also assume that $b>0$. Let $Z,Z'$ be i.i.d.~$N(0,1)$ random variables, so that
\[
\var(\Phi(a+bZ)) = \frac{1}{2}\ee[(\Phi(a+bZ)-\Phi(a+bZ'))^2]. 
\]
Let $\varphi = \Phi'$ be the standard normal p.d.f. Now, 
\[
\Phi(a+bZ)-\Phi(a+bZ') = \varphi(Y) b(Z-Z')
\]
for some $Y$ lying between $a+bZ$ and $a+bZ'$. Suppose that in a particular realization, $Z$ and $Z'$ both turn out to be in $[-1,1]$. Then $Y$ lies between $a-b$ and $a+b$, which implies that $\varphi(Y)$ is at least as large as the minimum of $\varphi(a-b)$ and $\varphi(a+b)$. Thus, in this situation,
\begin{align*}
\varphi(Y) &\ge \frac{1}{\sqrt{2\pi}} \exp\biggl(-\frac{1}{2}\max\{(a-b)^2, (a+b)^2\}\biggr)\\
&\ge \frac{1}{\sqrt{2\pi}} \exp(-(a^2+b^2)).
\end{align*}
This shows that 
\begin{align*}
\var(\Phi(a+bZ)) &\ge \frac{1}{2}\ee[(\Phi(a+bZ)-\Phi(a+bZ'))^2; |Z|\le 1, |Z'|\le 1]\\
&\ge C_1 b^2e^{-(a^2+b^2)} \ee[(Z-Z')^2; |Z|\le 1, |Z'|\le 1]\\
&= C_2 b^2 e^{-(a^2+b^2)},
\end{align*}
where $C_1$ and $C_2$ are positive universal constants. 
This proves the lower bound. For the upper bound, simply observe that since $\varphi$ is uniformly bounded by $1/\sqrt{2\pi}$, we have
\begin{align*}
\var(\Phi(a+bZ)) &\le \frac{b^2}{4\pi}\ee[(Z - Z')^2]= \frac{b^2}{2\pi}.
\end{align*}
This completes the proof of the lemma.
\end{proof}
The next lemma compares one of our measures of conditional dependence with partial $R^2$ in the case of normal random variables.
\begin{lmm}\label{gaussianlmm}
Let $(Y, \bbx, \bbz)$ be jointly normal, with $Y\sim N(0,\tau^2)$ for some $\tau >0$. Let $Q(Y,\bbz|\bbx)$ be defined as in the statement of Lemma \ref{expthmcond}.  Let $\alpha^2 := \var(Y|\bbx, \bbz)$ and $\beta^2 := \var(Y|\bbx)$, and assume that these numbers are nonzero. Let $R^2_{Y,\bbz|\bbx}$ be the partial $R^2$ of $Y$ and $\bbz$ given $\bbx$. There are positive universal constants $C_1$ and $C_2$ such that
\begin{align*}
\frac{C_1\beta^2 e^{-\beta^2/\alpha^2}}{\alpha\tau}R_{Y,\bbz|\bbx}^2 \le Q(Y,\bbz|\bbx) &\le \frac{C_2\beta^2}{\alpha^2} R_{Y,\bbz|\bbx}^2.
\end{align*}
The same bounds hold if we replace $Q(Y, \bbz|\bbx)$ by $Q(Y,\bbz)$ (defined in equation \eqref{qdef}) and $R^2_{Y,\bbz|\bbx}$ by $R^2_{Y,\bbz}$ (the usual $R^2$ between $Y$ and $Z$) on both sides, and define $\beta^2$ as $\var(Y)$. 
\end{lmm}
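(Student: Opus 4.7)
\section*{Proof proposal for Lemma \ref{gaussianlmm}}

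The plan is to reduce everything to the situation of Lemma \ref{normbdlmm} by exploiting joint normality. Let $\bbw=(\bbx,\bbz)$ and write $L_0(\bbx):=\ee(Y\mid\bbx)$ and $L_1(\bbw):=\ee(Y\mid\bbw)$; both are linear in the underlying Gaussian variables. Set $W:=L_1(\bbw)-L_0(\bbx)$. Since $(W,\bbx)$ is jointly Gaussian and $\ee(W\mid\bbx)=0$, the conditional expectation of $W$ given $\bbx$ is identically zero, and for jointly Gaussian vectors this forces $W$ to be independent of $\bbx$. The law of total variance then gives $\var(W)=\beta^{2}-\alpha^{2}=:\gamma^{2}$, and the partial $R^2$ works out to $R^{2}_{Y,\bbz\mid\bbx}=\gamma^{2}/\beta^{2}$.

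Next, because $Y\mid\bbw\sim N(L_1(\bbw),\alpha^{2})$, we have
\[
G_{\bbw}(t)=\Phi\!\left(\frac{L_1(\bbw)-t}{\alpha}\right)=\Phi\!\left(\frac{L_0(\bbx)-t}{\alpha}+\frac{W}{\alpha}\right).
\]
Conditional on $\bbx$, the quantity $W/\alpha$ is $N(0,\gamma^{2}/\alpha^{2})$, so setting $a=(L_0(\bbx)-t)/\alpha$ and $b=\gamma/\alpha$ puts us exactly in the framework of Lemma \ref{normbdlmm}. Applying that lemma yields
\[
C_1\,\frac{\gamma^{2}}{\alpha^{2}}\,\exp\!\Bigl(-\tfrac{(L_0(\bbx)-t)^{2}}{\alpha^{2}}-\tfrac{\gamma^{2}}{\alpha^{2}}\Bigr)\ \le\ \var(G_{\bbw}(t)\mid\bbx)\ \le\ C_2\,\frac{\gamma^{2}}{\alpha^{2}}.
\]
Integrating the upper bound against $\mu$ gives $Q(Y,\bbz\mid\bbx)\le C_2\gamma^{2}/\alpha^{2}=C_2(\beta^{2}/\alpha^{2})R^{2}_{Y,\bbz\mid\bbx}$, which is precisely the stated upper bound.

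The main obstacle, such as it is, is controlling the lower bound, where the troublesome factor $\exp(-(L_0(\bbx)-t)^{2}/\alpha^{2})$ must be averaged. Using $\gamma^{2}\le\beta^{2}$ to pull out $e^{-\beta^{2}/\alpha^{2}}$, I am left with
\[
I:=\int\ee\!\Bigl(\exp\!\bigl(-(L_0(\bbx)-t)^{2}/\alpha^{2}\bigr)\Bigr)\,d\mu(t)=\ee\!\Bigl(\exp\!\bigl(-(L_0(\bbx)-Y')^{2}/\alpha^{2}\bigr)\Bigr),
\]
where $Y'\sim\mu=N(0,\tau^{2})$ is independent of $\bbx$. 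Since $L_0(\bbx)\sim N(0,\tau^{2}-\beta^{2})$ is independent of $Y'$, the difference $L_0(\bbx)-Y'$ is $N(0,2\tau^{2}-\beta^{2})$, and the standard Gaussian moment generating identity gives
\[
I=\bigl(1+2(2\tau^{2}-\beta^{2})/\alpha^{2}\bigr)^{-1/2}=\alpha\bigl(\alpha^{2}+4\tau^{2}-2\beta^{2}\bigr)^{-1/2}\ge c\,\alpha/\tau
\]
for a universal constant $c>0$, using $\alpha\le\beta\le\tau$. Combining everything yields $Q(Y,\bbz\mid\bbx)\ge C_1'(\gamma^{2}/(\alpha\tau))e^{-\beta^{2}/\alpha^{2}}=C_1'(\beta^{2}/(\alpha\tau))e^{-\beta^{2}/\alpha^{2}}R^{2}_{Y,\bbz\mid\bbx}$, matching the stated lower bound.

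For the unconditional statement about $Q(Y,\bbz)$ and $R^{2}_{Y,\bbz}$, the argument is identical after dropping $\bbx$: one writes $Y=\ee(Y\mid\bbz)+\varepsilon'$ with $\varepsilon'\sim N(0,\alpha^{2})$ independent of $\bbz$, and the same two steps (apply Lemma \ref{normbdlmm}, then do the Gaussian integral against $\mu$) go through verbatim, with $\beta^{2}$ now playing the role of $\var(Y)=\tau^{2}$.
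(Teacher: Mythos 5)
Your proposal is correct and follows essentially the same route as the paper: express $G_{\bbw}(t)$ as $\Phi$ of an affine function of a standard normal that is conditionally independent of $\bbx$, invoke Lemma~\ref{normbdlmm}, and then evaluate the resulting Gaussian integral against $\mu$ using $\alpha\le\beta\le\tau$. The only cosmetic difference is that you justify the conditional distribution of $\ee(Y|\bbw)$ given $\bbx$ via independence of $W=\ee(Y|\bbw)-\ee(Y|\bbx)$ from $\bbx$, whereas the paper computes $\var(\ee(Y|\bbw)\mid\bbx)=\beta^2-\alpha^2$ directly by an orthogonality decomposition; the two are equivalent here.
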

\begin{proof}
Let $\bbw := (\bbx, \bbz)$, and for each $t\in \rr$, let
\begin{align*}
Y_t := \pp(Y\ge t|\bbw). 
\end{align*}
Let $U := \ee(Y|\bbw)$ and $V := \ee(Y|\bbx)$.  Given $\bbw$, $Y$ is normal with mean $U$ and variance $\alpha^2$. Thus,
\begin{align}
Y_t &= \pp((Y-U)/\alpha \ge (t-U)/\alpha|\bbw) \notag \\
&= 1-\Phi((t-U)/\alpha) = \Phi((U-t)/\alpha). \label{ytexp}
\end{align}
Recall that 
\begin{align*}
Q(Y,\bbz|\bbx) &= \int\ee(\var(Y_t|\bbx)) d\mu(t),
\end{align*}
where $\mu$ is the $N(0,\tau^2)$ probability measure. So, by \eqref{ytexp},
\begin{align}\label{qphi}
Q(Y,\bbz|\bbx) &=  \int\ee(\var(\Phi((U-t)/\alpha)|\bbx)) d\mu(t).
\end{align}
Now, note that $\ee(U|\bbx) = V$. Next, note that
\begin{align*}
\ee[(Y-U)(U-V)|\bbx] &= \ee[\ee((Y-U)(U-V)|\bbw)|\bbx]  = 0,
\end{align*}
since $\ee(Y|\bbw) = U$, and $U$ and $V$ are functions of $\bbw$. Thus,
\begin{align*}
\ee[(Y-V)^2|\bbx] &= \ee[(Y-U)^2|\bbx] + \ee[(U-V)^2|\bbx]\\
&=  \ee[\ee((Y-U)^2|\bbw)|\bbx] + \var(U|\bbx)\\
&= \ee[\var(Y|\bbw)|\bbx] + \var(U|\bbx)\\
&= \alpha^2 + \var(U|\bbx). 
\end{align*}
But $\ee[(Y-V)^2|\bbx] = \beta^2$. Thus, $\var(U|\bbx)=\beta^2 - \alpha^2$. Therefore, given $\bbx$, $U$ is normal with mean $V$ and variance $\beta^2 - \alpha^2$. Let
\[
b := \frac{\sqrt{\beta^2-\alpha^2}}{\alpha}. 
\]
Then, by \eqref{qphi}, we get
\begin{align*}
Q(Y,\bbz|\bbx) &=  \int\ee(\var(\Phi(b Z +  (V-t)/\alpha)|\bbx)) d\mu(t),
\end{align*}
where $Z$ is a standard normal random variable, independent of all else. By Lemma \ref{normbdlmm},
\begin{align*}
C_1 b^2 e^{-(V-t)^2/\alpha^2 - b^2}\le \var(\Phi(b Z +  (V-t)/\alpha)|\bbx) \le C_2 b^2.
\end{align*}
Plugging these bounds into the previous display, we get
\begin{align}\label{c1b}
C_1 b^2e^{-b^2} \int\ee( e^{-(V-t)^2/\alpha^2})d\mu(t)\le Q(Y,\bbz|\bbx)  \le C_2 b^2.
\end{align}
Now note that 
\begin{align*}
\int \ee(e^{-(V-t)^2/\alpha^2})d\mu(t) &= \ee(e^{-(V-\xi)^2/\alpha^2}),
\end{align*}
where $\xi\sim N(0,\tau^2)$ and is independent of $V$. But $V-\xi\sim N(0, 2\tau^2-\beta^2)$, since 
\begin{align*}
\var(V) &= \var(Y) - \ee(\var(Y|\bbx)) = \tau^2-\beta^2. 
\end{align*}
Thus, a simple computation gives
\begin{align*}
\ee(e^{-(V-\xi)^2/\alpha^2}) &= \biggl(1 + \frac{2(2\tau^2-\beta^2)}{\alpha^2}\biggr)^{-1/2}\\
&= \frac{\alpha}{\sqrt{\alpha^2 + 2(2\tau^2-\beta^2)}}\ge \frac{\alpha}{2\tau},
\end{align*}
where the last inequality holds because $\alpha \le \beta$. 
Plugging this lower bound into \eqref{c1b} and using $b^2\le \beta^2/\alpha^2$, we get
\begin{align*}
\frac{C_1\alpha e^{-\beta^2/\alpha^2}b^2}{2\tau} \le Q(Y,\bbz|\bbx) \le C_2 b^2. 
\end{align*}
Finally, observe that
\begin{align*}
b^2  &=  \frac{\beta^2}{\alpha^2} \frac{\beta^2-\alpha^2}{\beta^2}=\frac{\beta^2}{\alpha^2}R_{Y,\bbz|\bbx}^2
\end{align*}
and substitute this in the previous display. This completes the proof of the first assertion of the lemma. The second assertion follows similarly, by retracing the steps in the proof and making suitable changes at the appropriate places.
\end{proof}
We are now ready to prove Theorem \ref{deltacompare}.
\begin{proof}[Proof of Theorem \ref{deltacompare}]
Let $S$ be an insufficient subset of predictors. Then there is some $j\notin S$ such that $Q(Y, X_j|\bbx_S) \ge \delta$, because if $Q(S)$ is defined as in \eqref{qsdef}, then it is not hard to see that
\[
Q(Y, X_j|\bbx_S) = Q(S\cup \{j\}) - Q(S).
\]
But then, by Lemma \ref{gaussianlmm},
\begin{align*}
\rho(S, j) &= R^2_{Y,X_j|\bbx_S}\\
&\ge \frac{\alpha^2}{C_2\beta^2} Q(Y,X_j|\bbx_S) \ge \frac{\alpha^2\delta}{C_2\beta^2},
\end{align*}
where $\alpha^2 = \var(Y|\bbx_{S\cup\{ j\}})$ and $\beta^2 = \var(Y|\bbx_S)$. Since $\alpha^2 \ge \var(Y|\bbx) =\sigma^2$ and $\beta^2\le  \var(Y) = \tau^2$, this shows that 
\[
\rho(S, j)  \ge \frac{\sigma^2\delta}{C_2\tau^2}. 
\]
This proves that $\delta' \ge \sigma^2 \delta /C_2\tau^2$. Conversely, for any insufficient set $S$, there is some $j\notin S$ such that $\rho(S,j)\ge \delta'$. So by  Lemma \ref{gaussianlmm}, 
\begin{align*}
Q(Y, X_j|\bbx_S) &\ge \frac{C_1\beta^2 e^{-\beta^2/\alpha^2}\delta'}{\alpha\tau}.
\end{align*}
Now, $\beta^2 \ge \var(Y|\bbx) = \sigma^2$, $\alpha^2 \le \var(Y)=\tau^2$, and $\alpha^2\le \beta^2$. Thus,
\[
Q(Y, X_j|\bbx_S) \ge \frac{C_1\sigma^2 e^{-1}\delta'}{\tau^2}.
\]
Thus, $\delta\ge C_1e^{-1} \sigma^2 \delta'/\tau^2$. This completes the proof of the theorem.
\end{proof}

\section*{Acknowledgments}
We are grateful to Mohsen Bayati, Persi Diaconis, Adityanand Guntuboyina, Susan Holmes, Bodhisattva Sen and Rob Tibshirani for helpful comments, and to Nima Hamidi, Norm Matloff, and Balasubramanian Narasimhan for help with preparing the R package FOCI. We also thank the anonymous referees and the associate editor for various useful suggestions that helped improve the paper.


\begin{thebibliography}{99}

\bibitem[Adragni and Cook(2009)]{ac09} {\sc Adragni, K.~P.} and {\sc Cook, R.~D.} (2009). Sufficient dimension reduction and prediction in regression. {\it Philos. Trans. R. Soc. Lond. Ser. A,} {\bf 367} no. 1906, 4385--4405. 

\bibitem[Amit and Geman(1997)]{amitgeman97} {\sc Amit, Y.} and {\sc Geman, D.} (1997). Shape quantization and recognition with randomized trees. {\it Neural Comput.,} {\bf 9}, 1545--1588.



\bibitem[Azadkia and Chatterjee(2019)]{acpackage} {\sc Azadkia, M., Chatterjee, S.} and {\sc Matloff, N.}  (2020). FOCI: Feature Ordering by Conditional Independence. {\it R package.} \url{https://CRAN.R-project.org/package=FOCI}.

\bibitem[Battiti(1994)]{battiti94} {\sc Battiti, R.} (1994). Using mutual information for selecting features in supervised neural net learning. {\it IEEE Trans. Neural Networks,} {\bf 5} no. 4, 537--550. 

\bibitem[Bergsma(2004)]{bergsma04} {\sc Bergsma, W.~P.} (2004). Testing conditional independence for continuous random variables. {\it Preprint.} Available at \url{https://www.researchgate.net/profile/Wicher_Bergsma/publication/251188458_Testing_conditional_independence_for_continuous_random_variables/links/5406eb460cf2bba34c1e760f.pdf}.

\bibitem[Berrett, Wang, Barber and Samworth]{berrett19} {\sc Berrett, T.~B., Wang, Y., Barber, R.~F.} and {\sc Samworth, R.~J.} (2019). The conditional permutation test for independence while controlling for confounders. {\it Preprint.} Available at \url{https://arxiv.org/abs/1807.05405}.

\bibitem[Breiman(1995)]{breiman95} {\sc Breiman, L.} (1995).  Better subset regression using the nonnegative garrote. {\it Technometrics,} {\bf 37} no. 4, 373--384. 

\bibitem[Breiman(1996)]{breiman96} {\sc Breiman, L.} (1996). Bagging predictors. {\it Machine Learning,} {\bf 26} no. 2, 123--140.

\bibitem[Breiman(2001)]{breiman01} {\sc Breiman, L.} (2001). Random forests. {\it Machine Learning,} {\bf 45} no. 1, 5--32.

\bibitem[Breiman, Friedman, Olshen and Stone(1984)]{bfos84} {\sc Breiman, L.,  Friedman, J.~H., Olshen, R.~A.} and {\sc Stone, C.~J.} (1984).  {\it Classification and regression trees.}  Wadsworth Advanced Books and Software, Belmont, CA.


\bibitem[Candes, Fan, Janson and Lv(2018)]{Candes18} {\sc Cand\`es, E., Fan, Y., Janson, L.}  and {\sc Lv, J.} (2018). Panning for gold: `model-X' knockoffs for high-dimensional controlled variable selection. {\it J.  Roy. Statist. Soc. Series B,} {\bf 80} no. 3, 551--557.


\bibitem[Cand\`es and Tao(2007)]{candestao07} {\sc Cand\`es, E.} and {\sc Tao, T.} (2007). The Dantzig selector: Statistical estimation when $p$ is much larger than $n$. {\it Ann. Statist.,} {\bf 35} no. 6, 2313--2351.

\bibitem[Chatterjee(2019)]{chatterjee19} {\sc Chatterjee, S.} (2019). A new coefficient of correlation. {\it Preprint.} To appear in {\it J. Amer. Statist. Assoc.} Available at \url{https://arxiv.org/abs/1909.10140}.


\bibitem[Chen and Donoho(1994)]{chendonoho94} {\sc Chen, S.} and {\sc Donoho, D.} (1994). Basis pursuit. In {\it Proc. of 28th Asilomar Conf. on Signals, Systems and Computers,} Vol. 1, pp. 41--44. IEEE.

\bibitem[Cochran(1954)]{cochran54} {\sc Cochran, W.~G.} (1954). Some methods for strengthening the common $\chi^2$ tests. {\it Biometrics,} {\bf 10}, 417--451. 

\bibitem[Cook(2007)]{cook07} {\sc Cook, R.~D.} (2007). Fisher lecture: Dimension reduction in regression. {\it Statist. Sci.,} {\bf 22} no. 1, 1--26.



\bibitem[Dette, Siburg and Stoimenov(2013)]{dss13} {\sc Dette, H., Siburg, K.~F.} and {\sc Stoimenov, P.~A.} (2013). A copula-based non-parametric measure of regression dependence. {\it Scand. J. Stat.,} {\bf 40} no. 1, 21--41.


\bibitem[Doran, Muandet, Zhang and Sch\"olkopf(2014)]{doran14} {\sc Doran, G., Muandet, K., Zhang, K.} and {\sc Sch\"olkopf, B.} (2014). A permutation-based kernel conditional independence test. {\it Uncertainty in Artificial Intelligence,} {\bf 30}, 132--141.

\bibitem[Dua and Graff(2019)]{dg19} {\sc Dua, D.} and {\sc Graff, C.} (2019). UCI Machine Learning Repository [\url{http://archive.ics.uci.edu/ml}]. Irvine, CA: University of California, School of Information and Computer Science.

\bibitem[Durrett(2010)]{durrett10} {\sc Durrett, R.} (2010). {\it Probability: theory and examples.} Fourth edition.  Cambridge University Press, Cambridge.

\bibitem[Dvoretzky, Kiefer and Wolfowitz(1956)]{dkw56} {\sc Dvoretzky, A.,  Kiefer, J.} and {\sc Wolfowitz, J.} (1956). Asymptotic minimax character of the sample distribution function and of the classical multinomial estimator. {\it Ann. Math. Statist.,} {\bf 27}, 642--669. 

\bibitem[Efron, Hastie, Johnstone and Tibshirani(2004)]{efron04} {\sc Efron, B., Hastie, T., Johnstone, I.} and {\sc Tibshirani, R.} (2004). Least angle regression. {\it Ann. Statist.,} {\bf 32} no. 2, 407--499.

\bibitem[Fan and Li(2001)]{fanli01} {\sc Fan, J.} and {\sc Li, R.} (2001). Variable selection via nonconcave penalized likelihood and its oracle properties. {\it J. Amer. Statist. Assoc.,} {\bf 96} no. 456, 1348--1360.

\bibitem[Fan, Feng and Xia(2015)]{Fan15} {\sc Fan, J., Feng, Y.} and {\sc Xia, L.} (2015). A projection based conditional dependence measure with applications to high-dimensional undirected graphical models. {\it Preprint.} Available at \url{https://arxiv.org/abs/1501.01617}.

\bibitem[Freund and Schapire(1996)]{fs96} {\sc Freund, Y.} and {\sc Schapire, R.} (1996). Experiments with a new boosting algorithm. In {\it  Machine Learning: Proceedings of the Thirteenth International Conference,} pp. 148--156.

\bibitem[Friedman(1991)]{friedman91} {\sc Friedman, J.~H.} (1991).  Multivariate adaptive regression splines. With discussion and a rejoinder by the author. {\it Ann. Statist.,} {\bf 19} no. 1, 1--141.

\bibitem[Friedman, Bentley and Finkel(1977)]{fbf77} {\sc Friedman, J.~H., Bentley, J.~L.} and {\sc Finkel, R.~A.} (1977). An algorithm for finding best matches in logarithmic expected time. {\it ACM Trans.  Math. Software (TOMS),} {\bf 3} no. 3, 209--226.




\bibitem[Fukumizu, Gretton, Sun and Sch\"{o}lkopf(2008)]{Fukumizu08} {\sc Fukumizu, K., Gretton, A.,  Sun, X.} and {\sc Sch\"{o}lkopf, B.}  (2008). Kernel Measures of Conditional Dependence. {\it Adv. Neur. Inf. Proc. Syst.,} {\bf 20}, 489--496. 


\bibitem[George and McCulloch(1993)]{gm93} {\sc George, E.~I.} and {\sc McCulloch, R.~E.} (1993). Variable selection via Gibbs sampling. {\it J. Amer. Statist. Assoc.,} {\bf 88} no. 423, 881--889.



\bibitem[Hastie, Tibshirani and Friedman(2001)]{htf01} {\sc Hastie, T.,  Tibshirani, R.} and {\sc Friedman, J.} (2001). {\it The elements of statistical learning. Data mining, inference, and prediction.}  Springer-Verlag, New York.

\bibitem[Ho(1998)]{ho98} {\sc Ho, T.~K.} (1998). The random subspace method for constructing decision forests. {\it IEEE Trans. on Pattern Analysis
and Machine Intelligence,} {\bf 20} no. 8, 832--844.


\bibitem[Huang(2010)]{huang10} {\sc Huang, T.~M.} (2010). Testing conditional independence using maximal nonlinear conditional correlation. {\it Ann. Statist.,} {\bf 38}, 2047--2091.


\bibitem[Joe(1989)]{Joe89} {\sc Joe, H.}(1989). Relative entropy measures of multivariate dependence. {\it J. Amer. Statist. Assoc.,} {\bf 84} no. 405, 157--164.

\bibitem[Josse and Holmes(2016)]{Josse16} {\sc Josse, J.} and {\sc Holmes, S.} (2016). Measuring multivariate association and beyond. {\it Statistics Surveys,} {\bf 10}, 132--167.

\bibitem[Knuth(1998)]{knuth} {\sc Knuth, D.~E.} (1998). {\it The art of computer programming. Vol. 3. Sorting and searching.} Second edition. Addison-Wesley, Reading, MA.





\bibitem[Li(1991)]{li91} {\sc Li, K.-C.} (1991). Sliced inverse regression for dimension reduction. {\it J. Amer. Statist. Assoc.,} {\bf 86} no. 414, 316--342.

\bibitem[Linton and Gozalo(1996)]{Linton96} {\sc Linton, O.} and {\sc Gozalo, P.} (1996). Conditional independence restrictions: Testing and estimation. {\it Cowles Foundation Discussion Paper, no. 1140.}



\bibitem[Mantel and Haenszel(1959)]{Mantel59} {\sc Mantel, N.} and {\sc Haenszel, W.} (1959). Statistical aspects of the analysis of data from retrospective studies of disease. {\it J. Natl. Cancer Inst.,} {\bf 22} no. 4, 719--748.

\bibitem[Massart(1990)]{massart90} {\sc Massart, P.} (1990). The tight constant in the Dvoretzky-Kiefer-Wolfowitz inequality. {\it Ann. Probab.,} {\bf 18} no. 3, 1269--1283.

\bibitem[McDiarmid(1989)]{mcdiarmid89} {\sc McDiarmid, C.} (1989). On the method of bounded differences. In {\it Surveys in combinatorics,} 148--188, London Math. Soc. Lecture Note Ser., 141, Cambridge Univ. Press, Cambridge. 

\bibitem[Miller(2002)]{miller02} {\sc Miller, A.} (2002). {\it Subset selection in regression.} Chapman and Hall.

\bibitem[Patra, Sen and Sz\'ekely(2016)]{pss16} {\sc Patra, R.~K., Sen, B.} and {\sc Sz\'ekely, G.~J.} (2016). On a nonparametric notion of residual and its applications. {\it Statist. Probab. Lett.,} {\bf 109}, 208--213. 

\bibitem[Pearl(1988)]{pearl88} {\sc Pearl, J.} (1988). {\it Probabilistic reasoning in intelligent systems: networks of plausible inference.}  Morgan Kaufmann, San Mateo, CA. 
 
\bibitem[P\'oczos and Schneider(2012)]{Poczos12} {\sc P\'oczos, B. and Schneider, J.} (2012). Nonparametric Estimation of Conditional Information and Divergences. {\it Proc. Machine Learning Research,} {\bf 22}, 914--923.


\bibitem[Ravikumar, Lafferty, Liu and Wasserman(2009)]{ravi09} {\sc Ravikumar, P., Lafferty, J., Liu, H.} and {\sc Wasserman, L.} (2009). Sparse additive models. {\it J. Royal Statist. Soc. B,} {\bf 71} no. 5, 1009--1030.


\bibitem[Rudin(1987)]{rudin87} {\sc Rudin, W.} (1987). {\it Real and complex analysis.} Third edition. McGraw-Hill Book Co., New York. 

\bibitem[Runge(2018)]{runge18} {\sc Runge, J.} (2018). Conditional independence testing based on a nearest-neighbor estimator of conditional mutual information. {\it Proc. 21st Internat. Conf. Artif. Intell. Statist.,} {\bf 84}, 938--947.

\bibitem[Seth and Pr\'incipe(2012)]{Seth12} {\sc Seth, S.} and {\sc Pr\'incipe, J.~C.} (2012). Conditional Association. {\it Neural computation,} {\bf 24} no. 7, 1882--1905.

\bibitem[Sen, Suresh, Shanmugam, Dimakis and Shakkottai(2017)]{sen17}  {\sc Sen, R., Suresh, A.~T., Shanmugam, K., Dimakis, A.~G.} and {\sc Shakkottai, S.} (2017). Model-powered conditional independence test. {\it Adv. Neur. Inf. Proc. Sys.,} {\bf 31}, 2955--2965.

\bibitem[Shah and Peters(2018)]{shahpeters18} {\sc Shah, R.~D.} and {\sc Peters, J.} (2018). The hardness of conditional independence testing and the generalised covariance measure. {\it Preprint.} Available at \url{https://arxiv.org/abs/1804.07203}.
 


\bibitem[Song(2009)]{song09} {\sc Song, K.} (2009). Testing conditional independence via Rosenblatt transforms. {\it Ann. Statist.,} {\bf 37} no. 6B, 4011--4045.


\bibitem[Strobl, Zhang and Visweswaran(2019)]{strobl19} {\sc Strobl, E.~V., Zhang, K.} and {\sc Visweswaran, S.} (2019). Approximate kernel-based conditional independence tests for fast non-parametric causal discovery. {\it J. Causal Inf.,} {\bf 7} no. 1.


\bibitem[Su and White(2007)]{Su07} {\sc Su, L. and White, H.} (2007). A consistent characteristic function-based test for conditional independence. {\it J. Econometrics,} {\bf 141}, 807--834. 

\bibitem[Su and White(2008)]{Su08} {\sc Su, L. and White, H.} (2008). A nonparametric Hellinger metric test for conditional independence. {\it Econ.  Theory,} {\bf 24}, 829--864. 

\bibitem[Su and White(2014)]{Su14} {\sc Su, L.} and {\sc White, H.} (2014). Testing conditional independence via empirical likelihood. {\it J.  Econometrics,} {\bf 182} no. 1, 27--44.


\bibitem[Sz\'ekely and Rizzo(2014)]{szekelyrizzo14} {\sc Sz\'ekely, G.~J.}  and {\sc Rizzo, M.~L.} (2014). Partial distance correlation with methods for dissimilarities. {\it Ann. Statist.,} {\bf 42} no. 6, 2382--2412.

 

\bibitem[Tibshirani(1996)]{tibs96} {\sc Tibshirani, R.} (1996). Regression shrinkage and selection via the lasso. {\it J. Royal Statist. Soc. B,} {\bf 58}  no. 1, 267--288.


\bibitem[Veraverbeke, Omelka and Gijbels(2011)]{veraverbeke11} {\sc Veraverbeke, N., Omelka, M.} and {\sc Gijbels, I.} (2011). Estimation of a conditional copula and association measures. {\it Scand. J. Statist.,} {\bf 38} no. 4, 766--780.

\bibitem[Vergara and Est\'evez(2014)]{vergaraestevez14} {\sc Vergara, J.~R.} and {\sc Est\'evez, P.~A.} (2014). A review of feature selection methods based on mutual information. {\it Neural Comput. Appl.,} {\bf 24} no. 1, 175--186.


\bibitem[Wang, Pan, Hu, Tian and Zhang(2015)]{Wang15} {\sc Wang, X., Pan, W., Hu, W., Tian, Y.} and {\sc Zhang, H.} (2015). Conditional Distance Correlation. {\it J. Amer. Statist. Assoc.,} {\bf 110} no. 512, 1726--1734. 

\bibitem[Yuan and Lin(2006)]{yuanlin06} {\sc Yuan, M.} and {\sc Lin, Y.}  (2006). Model selection and estimation in regression with grouped variables. {\it J. Royal Statist. Soc. B,} {\bf 68} no. 1, 49--67.

\bibitem[Yukich(1998)]{yukich98} {\sc Yukich, J.~E.} (1998). {\it Probability Theory of Classical Euclidean Optimization Problems.}  Springer, Berlin. 



\bibitem[Zhang, Peters, Janzing and Sch\"olkopf(2011)]{zhang11} {\sc Zhang, K.,  Peters, J., Janzing, D.} and {\sc Sch\"olkopf, B.} (2011). Kernel-based conditional independence test and application in causal discovery. {\it Uncertainty in Artificial Intelligence,} {\bf 27}, 804--813.


\bibitem[Zou(2006)]{zou06} {\sc Zou, H.} (2006). The adaptive lasso and its oracle properties. {\it J. Amer. Statist. Assoc.,} {\bf 101} no. 476, 1418--1429.

\bibitem[Zou and Hastie(2005)]{zouhastie05} {\sc Zou, H.} and  {\sc Hastie, T.} (2005). Regularization and variable selection via the elastic net. {\it J. Royal Statist. Soc. B,} {\bf 67} no. 2, 301--320.


\end{thebibliography}
\end{document}